\newtheorem{theorem}{Theorem}[section]
\newtheorem{corollary}{Corollary}
\newtheorem{lemma}[theorem]{Lemma}
\newtheorem{proposition}{Proposition}
\theoremstyle{definition}
\newtheorem{definition}[theorem]{Definition}
\newtheorem{remark}{Remark}
\newcommand{\rr}{\mathbf{R}}
\newcommand{\nnn}{\mathbf{N}}
\newcommand{\eff}{{\mathrm{eff}}}
\newcommand{\be}{\begin{equation}}
\newcommand{\ee}{\end{equation}}
\newcommand{\ba}{\begin{array}}
\newcommand{\ea}{\end{array}}
\newcommand{\disp}{\displaystyle}
\newcommand{\ve}{\varepsilon}
\newcommand{\mop}[1]{\mathop{\mathrm{#1}}}
\newcommand{\set}[1]{\left\{#1 \right\}}
\newcommand{\ii}[1]{\int \limits_{#1}}
\title[Concentration of eigenfunctions of a locally periodic operator]{Concentration of eigenfunctions of a locally periodic elliptic operator with large potential in a perforated cylinder}
\author[Iryna Pankratova and Klas Pettersson]{}
\subjclass{Primary: 35B27; Secondary: }
 \keywords{Homogenization, spectral problem, localization of eigenfunctions, locally periodic perforated domain, dimension reduction.}
 \email{iripan@hin.no}
 \email{klapet@hin.no}
\thanks{}
\begin{document}
\maketitle

% Enter the first author's name and address:
\centerline{\scshape Iryna Pankratova }
\medskip
{\footnotesize
% please put the address of the first author
 \centerline{Narvik University College,}
   \centerline{Postbox 385,}
   \centerline{ 8505 Narvik, Norway}
} % Do not forget to end the {\footnotesize by the sign }

\medskip

\centerline{\scshape Klas Pettersson}
\medskip
{\footnotesize
 % please put the address of the second  and third author
 \centerline{Narvik University College,}
   \centerline{Postbox 385,}
   \centerline{ 8505 Narvik, Norway}
}

\bigskip

% The name of the associate editor will be entered by an editorial staff
% "Communicated by the associate editor name" is not needed for special issue.
 \centerline{(Communicated by the associate editor name)}

%The abstract of your paper
\begin{abstract}
We consider the homogenization of an elliptic spectral problem with a large potential stated in a thin cylinder with a locally periodic perforation.
The size of the perforation gradually varies from point to point.
We impose homogeneous Neumann boundary conditions on the boundary of perforation and on the lateral boundary of the cylinder.
The presence of a large parameter $1/\ve$ in front of the potential and the dependence of the perforation on the slow variable give rise to the effect of localization of the eigenfunctions.
We show that the $j$th eigenfunction can be approximated by a scaled exponentially decaying function that is constructed in terms of the $j$th eigenfunction of a one-dimensional harmonic oscillator operator.
\end{abstract}

%The title of your section 1
\section{Introduction}
We study the spectral asymptotics for a second-order elliptic operator with locally periodic coefficients
\be
\label{eq:A^eps}
A^\ve = - \partial_i(a_{ij}(x_1, \frac{x}{\ve}) \partial_j ) + \frac{1}{\ve^\beta}\, c(x_1, \frac{x}{\ve}),
\ee
defined in a thin perforated cylindrical domain in $\rr^d$ of thickness of order $\ve$, as $\ve \to 0$. The size of the perforation gradually varies along the cylinder.
The effective characteristics of the perforated cylinder (rod), as well as the methods of attacking the problem, depend essentially on the value of $\beta$. We distinguish three cases: $\beta=0$, $0<\beta<2$, and $\beta = 2$. In this paper we focus on the second case, $0< \beta<2$, and for simplicity of presentation we take $\beta=1$. The asymptotics of eigenpairs is described also in the other two cases, $\beta=0, 2$.

The case $\beta = 0$ (as well as $\beta <0$) is classical. The standard homogenization methods apply, and the convergence result for the case of a bounded domain can be retrieved from that obtained in \cite[Ch. 6]{BLP-78}. For the sake of completeness we formulate this result adapted to a locally periodic geometry (see Remark~\ref{rm:beta=0}).

The case when $\beta=1$ and the potential $c(x/\ve)$ is periodic with zero average has been studied in \cite[Ch. 12]{BLP-78}. The operator is defined in a bounded domain in $\mathbf R^d$, and the Dirichlet boundary condition is imposed on the boundary of the domain. It has been shown that the first eigenvalue of the original spectral problem converges to the first eigenvalue of the homogenized operator. The localization effect does not appear, and the corresponding eigenfunctions converge weakly in $H^1$.

If the coefficients of the operator do not oscillate, one deals with the asymptotics of a singularly perturbed operator. The ground state of a singularly perturbed nonselfadjoint operator
\[
\mu^2 \nabla_i a^{ij}(x) \nabla_j + \mu b^i(x) \nabla_i + v(x),
\]
defined on a smooth compact Riemannian manifold has been investigated in \cite{Pi-98}, \cite{HoKu-06}. The limit behaviour of the first eigenpair has been studied, as $\mu \to 0$. In the case of a selfadjoint operator ($b^i=0$), the localization of the first eigenfunction takes place in the scale $\sqrt \mu$ in the vicinity of minimum points of the potential, and the limit behaviour is described by a harmonic oscillator operator. The location and rate of concentration of the eigenfunctions are directly determined by the lower-order terms without any interplay with the scale of oscillations, in coefficients or geometry (see the one-dimensional example in Section~\ref{sec:1D-example}).

A Dirichlet spectral problem for singularly perturbed operators with oscillating coefficients has been studied in the recent work \cite{PiRy-2012}, where the ground state asymptotics has been studied using the viscosity solutions technique.

There are several works that are closely related to the problem under consideration where the localization effect is observed.

In \cite{AlPi-2002} the operator with a large locally periodic potential has been considered (the case $\beta=2$). It has been assumed that the first cell eigenvalue attains a unique minimum in the domain and at this point shows nondegenerate quadratic behaviour. The authors prove that the $j$th original eigenfunction is asymptotically given as a product of a periodic rapidly oscillating function and a scaled exponentially decaying function, the former function is the first cell eigenfunction at the scale $\ve$, and the later one is the $j$th eigenfunction of the harmonic oscillator type operator at the scale $\sqrt{\ve}$. The localization appears due to the presence of a large factor in the potential and the fact that the operator coefficients depend on the slow variable.

In \cite{AlBS-2002} the result of \cite{AlPi-2002} has been generalized to the case of transport equation posed in a locally periodic heterogeneous domain. Under the assumption that the leading eigenvalue of an auxiliary periodic cell problem attains a unique minimum, the homogenization and localization have been proved. The effective problem appears to be a diffusion equation with quadratic potential stated in the whole space. This gives an example of non-elliptic PDE for which the localization phenomenon is observed.

Localization phenomenon for a Schr\"{o}dinger equation in a locally periodic medium has been considered in \cite{AlPa-2006}. The authors show that there exists a localized solution which is asymptotically given as the product of a Bloch wave and of the solution of an homogenized Schr\"{o}dinger equation with quadratic potential.

The Dirichlet spectral problem for the Laplacian in a thin 2D strip of slowly varying thickness
has been studied in \cite{FrSo-2009}. Here the localization has been observed
in the vicinity of the point of maximum thickness. The large
parameter is the first eigenvalue of 1D Laplacian in the cross-section.

In the mentioned works, under natural non-degeneracy conditions, the asymptotics of the eigenpairs was
described in terms of the spectrum of an appropriate harmonic
oscillator operator.

The paper \cite{ChPaPi-13} deals with a spectral problem for a second order
divergence form elliptic operator in a periodically perforated bounded domain with a homogeneous Fourier boundary condition on the boundary of perforation. The coefficient  $q(x)$ in the boundary operator is a function of slow argument that leads to localization of eigenfunctions. A properly normalized eigenfunction converges to a delta function supported at the minimum point of $q(x)$. The localization takes place in the scale $\ve^{1/4}$.
In this scale the leading term of the asymptotic expansion for the $j$th eigenfunction proved to be the $j$th eigenfunction of an auxiliary harmonic oscillator operator.

The present paper describes another example of a problem for which the localization of eigenfunctions takes place. We present a localization result for an operator with a large potential stated in a thin perforated rod. The perforation is supposed to be locally periodic, i.e. the size of the holes varies gradually from point to point. The effect observed is similar to that described in \cite{ChPaPi-13}. However, the local periodicity of the microstructure together with dimension reduction (the original domain is asymptotically thin) bring additional technical difficulties. We make use of the singular measures technique when it comes to the homogenization procedure, and pass to the limit without focusing on the estimates for the rate of convergence: such estimates can be obtained following the ideas in \cite{VishLust}, \cite{OlYoSh} (see also estimates for the rate of convergence in \cite{ChPaPi-13}). We show that the $j$th eigenfunction can be approximated by a scaled exponentially decaying function being the $j$th eigenfunction of a one-dimensional harmonic oscillator operator, and the localization takes place in the scale $\ve^{1/4}$. In contrast with \cite{ChPaPi-13}, where the limit delta functions are supported at the minimum point of $q(x)$, we see that there is an interplay between the potential function and the local periodicity of the perforation. A special local average of the potential decides the points of localization (see condition (H3)). The technique used involves two-scale convergence in variable spaces with singular measure (see \cite{Zh-2000}). The proof of the convergence relies on a version of a mean-value property for locally periodic functions (see Lemma~\ref{lm:MVT-0}).

The paper is organized as follows. In Section~\ref{sec:prob-statement} we formulate the problem and state the main result in a short form. We also describe the result when $\beta=0$ and $\beta=2$. Section~\ref{sec:proof} is devoted to the proof of the main theorem~\ref{th:main-Th-short}. The proof consists of several steps. In Section~\ref{sec:prelimin-results} we obtain estimates for eigenvalues of the original problem. Section~\ref{sec:two-scale-convergence} provides all the necessary definitions and statements about the two-scale convergence in spaces with singular measures. In Section~\ref{sec:rescaling}, based on the estimates for the eigenvalues, we rescale the original problem. A priori estimates for the eigenfunctions of the rescaled problem are obtained in Section~\ref{sec:a-priori-estim}. Section~\ref{sec:pass-to-the-limit} contains a passage to the two-scale limit for the rescaled problem. The convergence of spectra is justified in Section~\ref{sec:conv-spectra}. A comprehensive result is given by Theorem~\ref{th:main-Th-full}. Section~\ref{sec:MVT} contains an auxiliary mean-value property for oscillating functions in a thin perforated rod. Often it is interesting to have a look at one-dimensional situation, where the expected effect is observed, and at the same time one can get more explicit formulae without additional technicalities. Such an example is presented in Section~\ref{sec:1D-example}.

\section{Problem statement and main results}
\label{sec:prob-statement}

Let $I = (-1/2,1/2)$ and let $Q \subset \rr^{d-1}$ be a bounded Lipschitz domain containg the origin. The points in $\mathbf R^d$ are denoted by $x =(x_1, x')$.
For a small parameter $\ve > 0$, we denote a thin cylinder segment (rod) by
\begin{align*}
G_\ve & = I \times \ve Q.
\end{align*}
In what follows we assume that $\ve = 1/(2N +1)$, $N \in \nnn$. The domain $\Omega_\ve$ is then obtained by cutting out $1/\ve$ holes in
$G_\ve$:
\begin{align*}
\Omega_\ve & = \big\{ x \in G_\ve : F\big(x_1, \frac{x}{\ve}\big) > 0 \big\},
\end{align*}
where the function $F(x_1,y_1, y') \in C^{1,\alpha}(\overline I\times \overline I \times \overline Q)$, $\alpha > 0$,
is $1$-peridic in $y_1$,
$F \big|_{y = 0} = -1$,
$F\big|_{y_1= \pm 1/2} \ge \mathrm{constant} > 0$, and
$\nabla_y F\big|_{y\neq 0} \neq 0$.

Throughout the paper, $\square = \mathbb T^1 \times Q$ is the periodicity cell, where $\mathbb T^1$ is a one-dimensional torus. The boundary of the cell is $\partial \square = \mathbb T^1 \times \partial Q$.

The hypotheses on $\ve$ and $F$ make $\Omega_\ve$ a union of cells of diameter $\ve$ with precisely one hole in each, bounded away from the cell boundary.
The shape and position of the holes vary slowly along the segment with the parameter $x_1$.
An illustration of $\Omega_\ve$ is shown in Figure~\ref{fig:omegave}(a).

We decompose the boundary of $\Omega_\ve$ as $\partial \Omega_\ve = \Gamma_\ve^- \cup \Gamma_\ve \cup S_\ve \cup \Gamma_\ve^+$ where
\begin{align*}
\Gamma_\ve & = I \times \ve \partial Q, & %\tag{lateral boundary}\\
\Gamma_\ve^\pm & = \{ \pm\textstyle{\frac{1}{2}} \} \times \ve Q, & %\tag{ends}\\
S_\ve & = \big\{ x \in G_\ve : F(x_1,\frac{x}{\ve}) = 0 \big\}.%\tag{surface of holes}
\end{align*}

We denote by
\begin{align*}
Y(x_1) & = \set{ y \in \square : F(x_1,y) > 0 }
\end{align*}
the perforated periodicity cell. The boundary of the perforated cell consists of the lateral boundary and the boundary of the hole, i.e.
$\partial Y(x_1) = \partial \square \cup \{y: \,\, F(x_1, y)=0\}$. An illustration of $Y(x_1)$ is shown in Figure~\ref{fig:omegave}(b).

\begin{figure}[b]
\centering
\hspace{.05\textwidth}
\begin{minipage}[c]{.45\textwidth}
\centering\includegraphics[width=\textwidth]{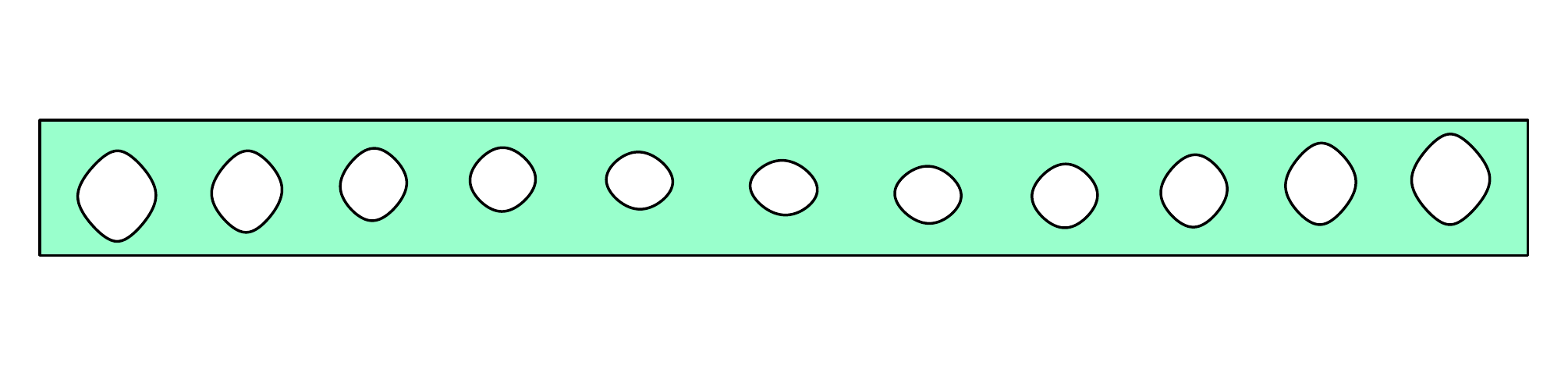}
\end{minipage}
\begin{minipage}[c]{.45\textwidth}
\centering\includegraphics[width=.55\textwidth]{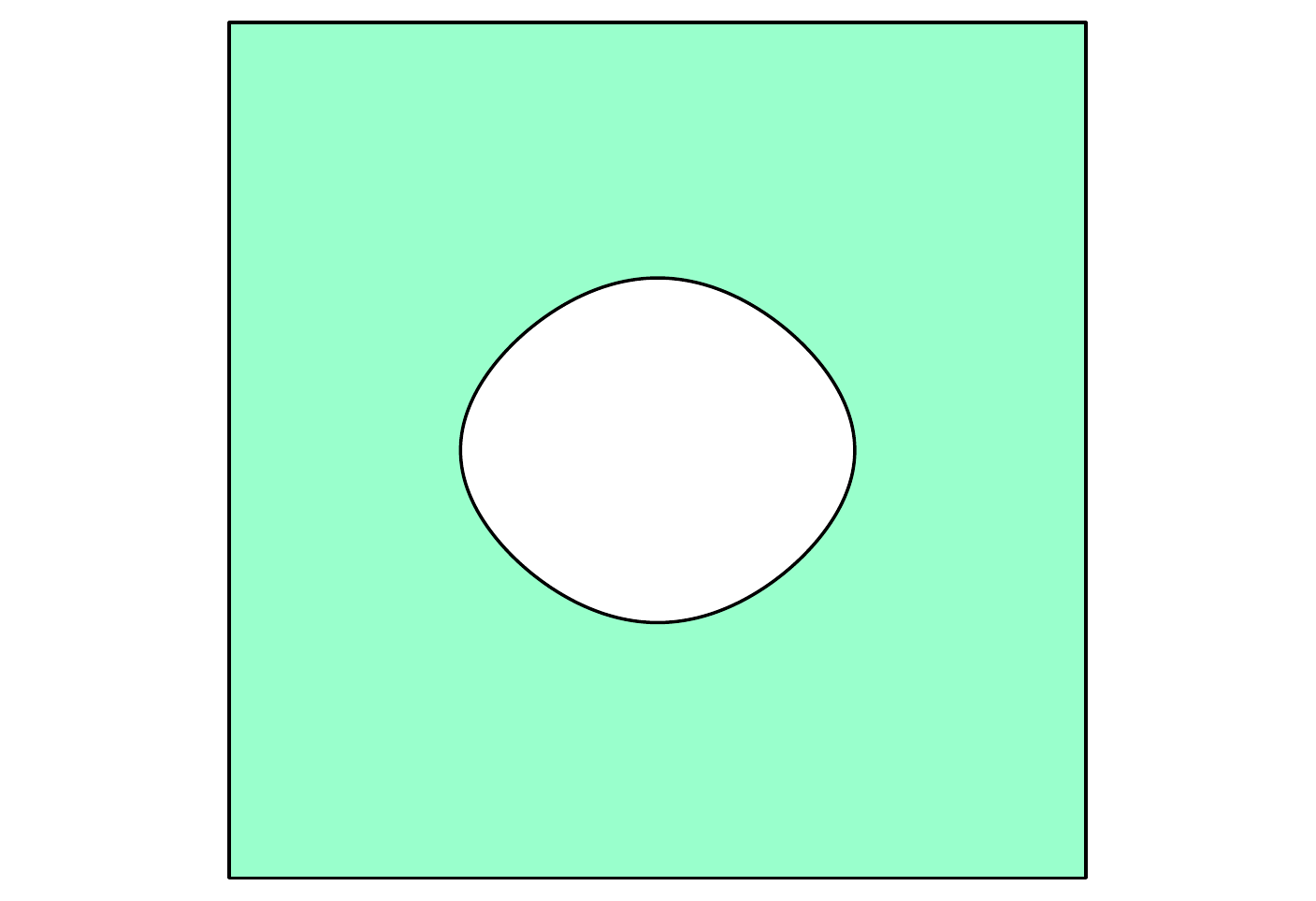}
\end{minipage}\\
\hspace{.05\textwidth}
\begin{minipage}[c]{.45\textwidth}
\centering\vspace{.25cm}(a)
\end{minipage}
\begin{minipage}[c]{.45\textwidth}
\centering\vspace{.25cm}(b)
\end{minipage}
\caption{The domains $\Omega_\ve$ (a) and $Y(x_1)$ (b).}
\label{fig:omegave}
\end{figure}

We investigate the asymptotic behaviour of solutions to the following spectral problem stated in the perforated rod $\Omega_\ve$:
\be
\label{eq:orig-prob}
\left\{
\ba{l}
\disp
- \mop{div}(a^\ve \nabla u^\ve) + \frac{1}{\ve}\, c^\ve\, u^\ve = \lambda^\ve\, u^\ve, \quad \hfill \text{ in } \Omega_\ve,
\\[3mm]
\disp
a^\ve\nabla u^\ve \cdot n_\ve = 0, \quad \hfill \text{ on } \Gamma_\ve \cup S_\ve,
\\[3mm]
u^\ve=0, \quad \hfill  \text{ on } \Gamma_\ve^\pm,
\ea
\right.
\ee
where $n_\ve$ denotes the outward unit normal.

We restrict ourselves to the following class of operators:
\begin{enumerate}[(H1)]
\item The coefficients are of the form $a^\ve(x) = a(x_1, \frac{x}{\ve})$ and $c^\ve(x) = c(x_1, \frac{x}{\ve})$,
where the functions $a_{ij}(x_1,y)\in C^{1, \alpha}(\overline{I}; C^\alpha({\square}))$, $c(x_1, y)\in C^{3}(\overline{I}; C^\alpha({\square}))$ are $1$-periodic in $y_1$, $0<\alpha<1$.
The function $c(x_1, y)$ is assumed to be positive.
\item The matrix $a(x_1, y)$ is symmetric and satisfies the uniform ellipticity condition: There is $\Lambda_0 > 0$ such that in almost everywhere in $I \times \square$,
$a(x_1, y) \xi \cdot \xi \ge \Lambda_0 |\xi|^2$ for all $\xi\in \mathbf R^d$.
\item The function
\begin{align*}
\overline{c}(x_1) := \frac{1}{|Y(x_1)|} \, \ii{Y(x_1)} c(x_1, y)\, dy
\end{align*}
has a unique global minimum at $x_1=0 \in I$ with $\bar{c}''(0)>0$.
\end{enumerate}

Denote
\begin{align*}
H_0^1(\Omega_\ve, \Gamma_\ve^\pm) = \{u \in H^1(\Omega_\ve): \,\, u=0 \,\, \mathrm{on}\,\, \Gamma_\ve^\pm\}.
\end{align*}
The weak formulation of the spectral problem \eqref{eq:orig-prob} reads:
Find $\lambda^\ve \in \mathbf{C}$ (eigenvalues) and $u^\ve \in H_0^1(\Omega_\ve, \Gamma_\ve^\pm)\setminus \{0\}$ (eigenfunctions) such that
\be
\label{eq:var-orig-prob}
\ii{\Omega_\ve} a^\ve \, \nabla u^\ve \cdot \nabla v\, dx + \frac{1}{\ve}\, \int \limits_{\Omega_\ve} c^\ve\, u^\ve\, v\, dx = \lambda^\ve \, \ii{\Omega_\ve} u^\ve\, v\, dx,
\ee
for any $v \in H_0^1(\Omega_\ve, \Gamma_\ve^\pm)$.

From the Hilbert space theory (see for example \cite{CouHi-53, Mi-65}) we have the following description of the spectrum of \eqref{eq:orig-prob}.
\begin{lemma}
Under the assumptions (H1), (H2), for any fixed $\ve >0$, the spectrum of problem \eqref{eq:orig-prob} is real and consists of a countable set of eigenvalues
\[
0 < \lambda_1^{\ve} < \lambda_2^{\ve} \leq \cdots \leq \lambda_j^{\ve} \leq \cdots \to + \infty.
\]
Each eigenvalue has finite multiplicity.
The corresponding eigenfunctions normalized by
\be
\label{eq:norm-cond-u^eps}
\ii{\Omega_\ve} u_i^{\ve}\,u_j^{\ve}\, dx = \ve^{1/4}\, \ve^{d-1} \, |Q|_{d-1} \, \delta_{ij}
\ee
form an orthogonal basis in $L^2(\Omega_\ve)$.
Here $|Q|_{d-1}$ is the  $(d-1)$-dimensional Lebesgue measure of $Q$ and $\delta_{ij}$ is the Kronecker delta.
\end{lemma}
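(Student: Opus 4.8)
The statement to prove is a standard fact about self-adjoint elliptic operators, so the plan is to cast \eqref{eq:var-orig-prob} as an eigenvalue problem for a compact self-adjoint operator and invoke the spectral theorem. Concretely, I would first fix $\ve>0$ and work in the Hilbert space $\mathcal V_\ve := H_0^1(\Omega_\ve,\Gamma_\ve^\pm)$, equipped with the inner product
\[
\langle u, v\rangle_{\mathcal V_\ve} := \ii{\Omega_\ve} a^\ve\,\nabla u\cdot\nabla v\, dx + \frac{1}{\ve}\ii{\Omega_\ve} c^\ve\, u\, v\, dx .
\]
Under (H1)–(H2) the matrix $a^\ve$ is bounded, symmetric and uniformly elliptic, and $c^\ve$ is continuous and strictly positive on the compact set $\overline{\Omega_\ve}$, so $\langle\cdot,\cdot\rangle_{\mathcal V_\ve}$ is indeed an inner product; moreover, using ellipticity together with the Poincaré inequality on $\Omega_\ve$ (valid because $u=0$ on $\Gamma_\ve^\pm$), the associated norm is equivalent to the standard $H^1$-norm. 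Hence $(\mathcal V_\ve,\langle\cdot,\cdot\rangle_{\mathcal V_\ve})$ is a Hilbert space.

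Next I would introduce the bounded bilinear form on $L^2(\Omega_\ve)$ that sends $(f,v)$ to $\ii{\Omega_\ve} f v\, dx$; by Lax–Milgram (or the Riesz representation theorem in $\mathcal V_\ve$), for every $f\in L^2(\Omega_\ve)$ there is a unique $T_\ve f\in\mathcal V_\ve$ with $\langle T_\ve f, v\rangle_{\mathcal V_\ve}=\ii{\Omega_\ve} f v\, dx$ for all $v\in\mathcal V_\ve$. This defines a linear operator $T_\ve: L^2(\Omega_\ve)\to\mathcal V_\ve$. Composing with the embedding $\mathcal V_\ve\hookrightarrow L^2(\Omega_\ve)$, which is compact by the Rellich–Kondrachov theorem (the domain $\Omega_\ve$ being bounded with Lipschitz boundary), yields a compact operator $K_\ve:L^2(\Omega_\ve)\to L^2(\Omega_\ve)$. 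Taking $v=T_\ve g$ in the defining relation and using symmetry of $a^\ve$ shows $\ii{\Omega_\ve}(K_\ve f) g\, dx=\langle T_\ve f, T_\ve g\rangle_{\mathcal V_\ve}=\ii{\Omega_\ve} f (K_\ve g)\, dx$, so $K_\ve$ is self-adjoint on $L^2(\Omega_\ve)$, and the same identity with $g=f$ gives $\ii{\Omega_\ve}(K_\ve f)f\,dx=\|T_\ve f\|_{\mathcal V_\ve}^2\ge 0$, so $K_\ve$ is positive; it is injective since $K_\ve f=0$ forces $T_\ve f=0$, hence $\ii{\Omega_\ve} f v\,dx=0$ for all $v\in\mathcal V_\ve$, hence $f=0$ by density of $\mathcal V_\ve$ in $L^2(\Omega_\ve)$.

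Now I would apply the Hilbert–Schmidt spectral theorem to the compact, self-adjoint, positive, injective operator $K_\ve$: its eigenvalues form a sequence $\mu_1^\ve\ge\mu_2^\ve\ge\cdots>0$ accumulating only at $0$, each of finite multiplicity, and the corresponding eigenfunctions form an orthonormal basis of $L^2(\Omega_\ve)$. Unwinding the definitions, $K_\ve u=\mu u$ with $\mu>0$ is equivalent to \eqref{eq:var-orig-prob} with $\lambda^\ve=1/\mu$, so problem \eqref{eq:orig-prob} has eigenvalues $\lambda_j^\ve=1/\mu_j^\ve$ satisfying $0<\lambda_1^\ve\le\lambda_2^\ve\le\cdots\to+\infty$, each of finite multiplicity, with eigenfunctions forming an orthogonal basis of $L^2(\Omega_\ve)$; rescaling each basis eigenfunction by the appropriate positive constant produces the normalization \eqref{eq:norm-cond-u^eps}. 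Reality of the spectrum is automatic since $K_\ve$ is self-adjoint. Positivity of the smallest eigenvalue, i.e. $\lambda_1^\ve>0$, follows from $c^\ve>0$: the Rayleigh quotient $\big(\ii{\Omega_\ve} a^\ve\nabla u\cdot\nabla u\,dx+\tfrac1\ve\ii{\Omega_\ve}c^\ve u^2\,dx\big)\big/\ii{\Omega_\ve}u^2\,dx$ is bounded below by $\tfrac1\ve\min_{\overline{\Omega_\ve}}c^\ve>0$. Finally, strict positivity of the \emph{gap} $\lambda_1^\ve<\lambda_2^\ve$, i.e. simplicity of the first eigenvalue, is the one point needing a separate argument: it follows from the strong maximum principle / Krein–Rutman theorem applied to the positivity-improving resolvent on the connected domain $\Omega_\ve$ (the first eigenfunction can be chosen strictly positive, and two orthogonal eigenfunctions cannot both be of one sign). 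I expect the bookkeeping around the Poincaré inequality on the thin perforated domain and the invocation of simplicity of $\lambda_1^\ve$ to be the only non-automatic parts; everything else is the textbook reduction to a compact self-adjoint operator, which is why the statement is quoted from \cite{CouHi-53, Mi-65}.
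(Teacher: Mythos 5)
Your proposal is correct and is precisely the standard reduction to a compact self-adjoint operator that the paper itself invokes: the paper states this lemma without proof, attributing it to classical Hilbert space theory with citations to Courant--Hilbert and Mikhlin. Your argument (Lax--Milgram/Riesz to build the solution operator, compactness of the embedding $H^1_0(\Omega_\ve,\Gamma_\ve^\pm)\hookrightarrow L^2(\Omega_\ve)$, the Hilbert--Schmidt theorem, inversion $\lambda_j^\ve=1/\mu_j^\ve$, and Krein--Rutman for simplicity of $\lambda_1^\ve$) is exactly what those references supply, so there is nothing to compare beyond noting you have written out the details the paper delegates.
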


The reason for choosing the normalization condition \eqref{eq:norm-cond-u^eps} for the eigenfunctions is to have the eigenfunctions of the rescaled problem \eqref{eq:rescaled-prob} and the limit problem \eqref{eq:eff-problem} normalized in a standard way (see \eqref{eq:norm-cond-v^eps} and \eqref{eq:norm-cond-v}, respectively).

If $c(x_1, y)$ is not positive, then the spectrum will be bounded from below by a negative constant, and all the arguments of the present paper can be repeated after shifting the spectrum by this constant. Thus, without loss of generality, we assume that $c(x_1, y)>0$.

Under the stated assumptions we study the asymptotic behaviour of the eigenpairs $(\lambda_j^\ve, u_j^\ve)$ as $\ve \to 0$.

\begin{remark}[About the extension operator]
\label{rm:extension}
For all sufficiently small $\ve$, there exists an extension operator
$P^\ve: H_0^1(\Omega_\ve, \Gamma_\ve^\pm) \to H_0^1(G_\ve, \Gamma_\ve^\pm)$
such that
\begin{align*}
\|P^\ve v\|_{L^2(G_\ve)} & \le C\, \|v\|_{L^2(\Omega_\ve)},&
\|\nabla(P^\ve v)\|_{L^2(G_\ve)} & \le C\, \|\nabla v\|_{L^2(\Omega_\ve)},
\end{align*}
where $C$ is a constant independent of $\ve$.
To construct such an operator, one can use the symmetric extensions described in \cite{LiMa-72}.

From now on we extend the solution $u^\ve$ of \eqref{eq:orig-prob} to the nonperforated rod $G_\ve$ using Remark~\ref{rm:extension} and then by zero to the infinite rod $\rr \times \ve Q$. Abusing slightly the notations, we keep the same name for the extended function.
\end{remark}

In order to formulate the main result, we introduce an effective spectral problem.
Denote by $(\nu_j, v_j(z_1))$ the $j$th eigenpair of the following one-dimensional problem:
\begin{align}\label{eq:eff-problem}
- a^\eff\, v'' + \frac{1}{2} \bar{c}''(0) \, x^2 \, v & = \nu \, v, &
v & \in L^2(\rr).
\end{align}
The coefficient $a^\eff$ is defined by
\be
\label{eq:a-eff}
a^\eff = \frac{1}{|Y(0)|} \int \limits_{Y(0)} a_{1i}(0,y)\, (\delta_{1i} + \partial_{y_i}N(y))\, dy,
\ee
where we adopt the Einstein summation convention over repeated indices.
The function $N(y) \in C^{1, \alpha}(\bar I\times \overline \Box)/\mathbf R$ is a solution of the following auxiliary cell problem:
\be
\label{eq:N}
\left\{
\begin{array}{lcr}
\displaystyle
- \mop{div}(a(0,y)\nabla N(y)) = \partial_{y_i} a_{i 1}(0, y), \quad \hfill y \in Y(0),
\\[1.5mm]
\displaystyle
a(0, y)\nabla N(y)\cdot n = -  a_{i 1}(0, y)\, n_i, \quad \hfill y \in \partial Y(0).
\end{array}
\right.
\ee
Using the symmetry and periodicity of $a(x_1, y)$, one can show that the effective coefficient $a^\eff$ is strictly positive (see the proof of Lemma~\ref{lm:conv-v^eps}). The spectral problem \eqref{eq:eff-problem} is a harmonic oscillator type problem. It is well-known that the spectrum is real and discrete:
\[
0< \nu_1 < \nu_2 < \nu_3 < \cdots < \nu_j \cdots \to +\infty.
\]
All the eigenvalues are simple, the corresponding eigenfunctions $v_k(z_1)$ can be normalized by
\be
\label{eq:norm-cond-v}
\int \limits_{\rr} v_i(z_1)\, v_j(z_1)\, dz_1 = \delta_{ij},
\ee
and form an orthonormal basis in $L^2(\mathbf R)$.

Our main result can be stated as follows.
\begin{theorem}
\label{th:main-Th-short}
Suppose that (H1)--(H3) are satisfied.
Let $(\lambda_j^\ve, u_j^\ve)$ be the $j$th eigenpair to problem \eqref{eq:orig-prob}, and $u_j^\ve$ is normalized by~\eqref{eq:norm-cond-u^eps}.
Then for any $j$,
\[
\lambda_j^\ve = \frac{\bar{c}(0)}{\ve} + \frac{\nu_j}{\sqrt{\ve}} + o(\ve^{-1/2}),
\quad \ve \to 0,
\]
where $\nu_j$ is the $j$th eigenvalue of the effective spectral problem \eqref{eq:eff-problem}, and $u_j^\ve(x)$ converges to the eigenfunction $v_j(z_1)$ corresponding to $\nu_j$ in the following sense:
\[
\lim \limits_{\ve \to 0} \frac{1}{\ve^{1/4}\, \ve^{d-1}} \,  \ii{\Omega_\ve} \big|u_j^\ve(x) - v_j\big(\frac{x_1}{\ve^{1/4}}\big)\big|^2\, dx = 0.
\]
For small enough $\ve$ all the eigenvalues $\lambda_j^\ve$ are simple.
\end{theorem}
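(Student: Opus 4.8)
The plan is to establish the asymptotics via a combination of rescaling, a priori estimates, two-scale convergence with singular measures, and a spectral convergence argument, following the roadmap sketched in the introduction. First I would prove the two-sided bound $\lambda_j^\ve = \bar c(0)/\ve + O(\ve^{-1/2})$ on the eigenvalues (this is the content of Section~\ref{sec:prelimin-results}). The upper bound is obtained by the variational min-max principle using test functions built from the eigenfunctions $v_j(x_1/\ve^{1/4})$ of the harmonic oscillator \eqref{eq:eff-problem}, suitably cut off near $\Gamma_\ve^\pm$ and multiplied by the oscillating corrector associated with $N$; evaluating the Rayleigh quotient and using a mean-value property for locally periodic functions (Lemma~\ref{lm:MVT-0}) together with the Taylor expansion $\bar c(x_1) = \bar c(0) + \tfrac12 \bar c''(0) x_1^2 + \dots$ near the minimum gives $\bar c(0)/\ve + \nu_j/\sqrt\ve + o(\ve^{-1/2})$. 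The lower bound follows from the positivity of $c$ and, eventually, from the spectral convergence itself.

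Next I would perform the rescaling $z_1 = x_1/\ve^{1/4}$, $z' = x'/\ve$, turning \eqref{eq:orig-prob} into a problem \eqref{eq:rescaled-prob} on a rod of length $\sim \ve^{-1/4}$ whose shifted eigenvalues $\Lambda_j^\ve := \sqrt\ve(\lambda_j^\ve - \bar c(0)/\ve)$ are bounded by the first step. In these variables the leading operator is $-a^\eff \partial_{z_1}^2 + \tfrac12\bar c''(0) z_1^2$ plus lower-order and oscillating terms. Then, in Section~\ref{sec:a-priori-estim}, I would derive a priori estimates showing the rescaled eigenfunctions $v_j^\ve$ and their gradients are bounded in $L^2$ with respect to the rescaled singular measure, and—crucially—exhibit exponential decay in $z_1$ (a Agmon-type estimate coming from the quadratic growth of $\bar c$), so that no mass escapes to infinity as the rod length grows. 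With these bounds, Section~\ref{sec:pass-to-the-limit} passes to the two-scale limit: using two-scale convergence in variable $L^2$ spaces with the singular measure $d\mu_\ve$ concentrated on $\Omega_\ve$ (in the sense of \cite{Zh-2000}), one identifies the two-scale limit of $v_j^\ve$ as $v(z_1)$ independent of the fast variable, and the gradient two-scale limit as $\nabla_{z_1} v(z_1)\otime(\delta_{1i}+\partial_{y_i}N)$ in the cell; the oscillating potential term $\ve^{-1}c(x_1,x/\ve)$ contributes, after subtracting $\bar c(0)/\ve$ and rescaling, the average $\bar c$ whose expansion produces the quadratic potential. This yields that every limit point of $(\Lambda_j^\ve, v_j^\ve)$ solves the effective problem \eqref{eq:eff-problem}.

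Finally, Section~\ref{sec:conv-spectra} upgrades this to the precise statement: one shows the limit set of rescaled eigenvalues is exactly $\{\nu_j\}$ and matches multiplicities. The upper bound from step one already places $\limsup \Lambda_j^\ve \le \nu_j$; combined with the fact that limit points are eigenvalues of \eqref{eq:eff-problem} and a standard argument that the $j$th rescaled eigenvalue cannot drop below $\nu_j$ (using orthogonality of the limits of $v_1^\ve,\dots,v_j^\ve$ in $L^2(\rr)$, which are orthonormal by \eqref{eq:norm-cond-u^eps}), one gets $\Lambda_j^\ve \to \nu_j$, i.e. $\lambda_j^\ve = \bar c(0)/\ve + \nu_j/\sqrt\ve + o(\ve^{-1/2})$. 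Since the $\nu_j$ are simple and strictly separated, the convergence of $\Lambda_j^\ve$ forces $\lambda_j^\ve$ to be simple for small $\ve$, and the convergence $v_j^\ve \to v_j$ strongly in the rescaled $L^2$, written back in the original variables, is exactly the claimed $L^2$ convergence of $u_j^\ve(x)$ to $v_j(x_1/\ve^{1/4})$.

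I expect the main obstacle to be the passage to the two-scale limit in the presence of \emph{both} the singular measure (from the locally periodic perforation) \emph{and} the dimension reduction (the cross-section shrinks like $\ve$ while the axis is rescaled by $\ve^{1/4}$, so the two fast-variable directions scale differently). Correctly defining the two-scale convergence and identifying the gradient limit in this anisotropic setting—in particular justifying that the corrector $N(y)$ evaluated at $x_1=0$ is the right one and that the variable-coefficient cell problem \eqref{eq:N} closes the system—is the technical heart of the argument, and is precisely where the mean-value property of Lemma~\ref{lm:MVT-0} is needed to handle the slow dependence of the coefficients and of the perforation on $x_1$ near the localization point.
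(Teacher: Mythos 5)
Your proposal follows the same roadmap as the paper (eigenvalue bounds via min-max, rescaling, a priori estimates, two-scale convergence with singular measures, then ordering/simplicity of spectra), and it is essentially correct, but three implementation details differ from the paper and are worth flagging. First, the rescaling you state, $z_1 = x_1/\ve^{1/4}$ together with $z' = x'/\ve$, is not what the paper uses and is internally inconsistent with the singular-measure framework you invoke: the paper rescales uniformly, $z = x/\ve^{1/4}$, so the cross-section becomes $\ve^{3/4}Q$ (still collapsing) and the fast variable is $z/\ve^{3/4}$ in \emph{all} directions; this is what makes $\mu_\ve \rightharpoonup dx_1 \times \delta(x')$ hold and the "two fast directions scale differently" worry you raise simply does not arise. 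With your $z' = x'/\ve$ the cross-section is fixed, so there is no singular measure, and the dimension reduction would instead have to come from a large transverse coefficient $\ve^{-3/2}$ in the rescaled operator — a different and not obviously easier route. Second, the paper's tightness estimate is the weighted $L^2$ bound $\|\tilde\chi^\ve \nabla v^\ve\| + \|z_1 v^\ve\| \le C$, obtained via the quadratic-equivalence Lemma~\ref{lm:equivav-funct} and the mean-value Lemma~\ref{lm:MVT-0}, rather than an Agmon exponential estimate; the weighted bound is weaker but suffices for the compactness Lemma~\ref{lm:compactness-2} and avoids having to control the oscillating potential pointwise. Third, you propose building the corrected test function $v_j(z_1) + \ve^{3/4}N(z/\ve^{3/4})v_j'(z_1)$ into Step~1 to get the sharp upper bound $\limsup \nu_j^\ve \le \nu_j$ up front and then close the ordering by orthonormality of the two-scale limits; the paper instead uses a crude $O(\ve^{-1/2})$ bound first (a plain $v(x_1/\ve^{1/4})$ test function, no corrector) and postpones the corrected test function to Lemma~\ref{lm:conv-spectra}, where a contradiction argument rules out $J(j) \ne j$. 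Both orderings are valid and require the same ingredients; the paper's version defers the harder computation until the cell problem and effective coefficient are already identified.
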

A proof of Theorem~\ref{th:main-Th-short} is given in Section~\ref{sec:proof}.

\begin{remark}[The power of $\ve$ in the zero-order term]
Theorem~\ref{th:main-Th-short} can be generalized to the case when $1/\ve$ in the zero-order term is substituted with $1/\ve^\beta$, for $0<\beta<2$.
The localization takes place in the scale $\ve^{\beta/4}$, and the homogenized problem \eqref{eq:eff-problem} does not change. As one can observe, the cases $\beta=0, 1, 2$ are special and are naturally treated by different methods. In the remarks below we give convergence results when $\beta=0, 2$.
\end{remark}

\begin{remark}[The purely periodic case $F = F(y)$ and $c = c(y)$]
In the purely periodic case, when both perforation and the potential are periodic, the localization phenomenon is not observed ($\bar c$ is constant). This case can be treated by any classical homogenization method, for example two-scale convergence or asymptotic expansion method. The presence of perforation brings some technical issues, but does not affect the main result which is described in \cite[Ch. 12]{BLP-78}.
\end{remark}

\begin{remark}[The case of a bounded potential]
\label{rm:beta=0}
When there is no large parameter in the zero-order term in \eqref{eq:orig-prob}, the classical homogenization takes place, and the localization effect is not observed.
For the sake of completeness we present the main result in this case, the proof uses classical two-scale convergence arguments and is left to the reader.

Let $u^\ve$ be a solution of the following boundary value problem
\be
\label{eq:prob-remark-1}
\left\{
\ba{l}
\disp
- \mop{div}(a^\ve(x) \nabla u^\ve) + c^\ve(x)\, u^\ve = \lambda^\ve\, u^\ve, \quad \hfill x \in \Omega_\ve,
\\[2mm]
\disp
a^\ve(x)\nabla u^\ve \cdot n_\ve = 0, \quad \hfill x \in \Gamma_\ve \cup S_\ve,
\\[2mm]
u^\ve=0, \quad \hfill  x \in \Gamma_\ve^\pm,
\ea
\right.
\ee
where $a^\ve(x)=a(x_1, \frac{x}{\ve})$, $c^\ve(x)=c(x_1, \frac{x}{\ve})$.

We introduce an effective problem
\be
\label{eq:eff-prob-remark-1}
\left\{
\ba{l}
\disp
- (a^\eff(x_1) u')' + \bar{c}(x_1)\, u = \lambda\, u, \quad \hfill x \in I,
\\[3mm]
\disp
u\big(\pm \frac{1}{2}\big)=0.
\ea
\right.
\ee
The effective diffusion coefficient  $a^\eff$ and the potential $\bar{c}$ are given by the formulae
\begin{align*}
a^\eff(x_1) & = \frac{1}{|Y(x_1)|} \int \limits_{Y(x_1)} a_{1j}(x_1, y)(\delta_{1j} + \partial_{y_j} N_1(x_1, y))\, dy,\\
\bar{c}(x_1) & = \frac{1}{|Y(x_1)|} \int \limits_{Y(x_1)} c(x_1, y)\, dy.
\end{align*}
The auxiliary function $N_1(x_1, y)$ solves the following cell problem:
\begin{align*}
%\label{eq:chi}
\left\{
\begin{array}{lcr}
\displaystyle
- \mop{div}(a(x_1,y)\nabla N_1(x_1, y)) = \partial_{y_i} a_{i 1}(x_1, y), \quad \hfill y \in Y(x_1),
\\[2mm]
\displaystyle
a(x_1, y)\nabla N_1(x_1, y)\cdot n = - a_{i 1}(x_1, y)\, n_i, \quad \hfill y \in \partial Y(x_1).
\end{array}
\right.
\end{align*}

\begin{proposition}
Let $(\lambda_j^\ve, u_j^\ve)$ be the $j$th eigenpair of problem \eqref{eq:prob-remark-1}.
Under the assumptions (H1), (H2) the following convergence result holds:
\begin{enumerate}[(i)]
\item $\lambda_j^\ve$ converges to $\lambda_j$, as $\ve \to 0$,
\item
$
\disp
\lim \limits_{\ve \to 0}\frac{1}{\ve^{d-1}}\, \int \limits_{\Omega_\ve} |u_j^\ve(x)-u_j(x_1)|^2\, dx = 0,
$
where $(\lambda_j, u_j)$ is the $j$th eigenpair of the effective spectral problem~\eqref{eq:eff-prob-remark-1}.
\end{enumerate}
\end{proposition}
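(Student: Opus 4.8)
We only sketch the proof, which is a routine combination of dimension reduction and classical periodic homogenization in a perforated domain in the spirit of \cite[Ch.~6]{BLP-78}; since no rate of convergence is claimed, the argument is considerably lighter than the one in Section~\ref{sec:proof}. First, normalising the eigenfunctions by $\ii{\Omega_\ve} u_i^\ve u_j^\ve\, dx = \ve^{d-1}|Q|_{d-1}\delta_{ij}$ and testing the Courant--Fischer min-max characterisation of $\lambda_j^\ve$ with trial subspaces spanned by smooth functions of $x_1$ alone (extended by zero to $\Omega_\ve$) yields $\lambda_j^\ve \le C_j$ uniformly in $\ve$; taking $v = u_j^\ve$ in the weak formulation of \eqref{eq:prob-remark-1} and invoking (H1)--(H2) then gives $\ve^{-(d-1)}\norm{\nabla u_j^\ve}{L^2(\Omega_\ve)}^2 \le C_j$. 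We extend $u_j^\ve$ to the non-perforated rod $G_\ve$ by the operator $P^\ve$ of Remark~\ref{rm:extension}, keeping these bounds up to a multiplicative constant.

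\textbf{Passage to the limit.} The plan is to work with the measures $d\mu_\ve = \ve^{-(d-1)}\mathbf 1_{\Omega_\ve}\, dx$, which concentrate as $\ve \to 0$ on the axis $I$ with limiting density $|Y(x_1)|$, or equivalently to use two-scale convergence with respect to the locally periodic perforation (Section~\ref{sec:two-scale-convergence}, cf. \cite{Zh-2000}). The uniform gradient bound suppresses all transverse and fast oscillations, so along a subsequence $u_j^\ve \to u_j(x_1)$ in the two-scale sense and strongly in $L^2(d\mu_\ve)$ with $u_j \in H_0^1(I)$, while $\nabla u_j^\ve$ two-scale converges to $u_j'(x_1)e_1 + \nabla_y \widehat u_j(x_1,y)$. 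Inserting test functions $v(x) = \phi(x_1) + \ve\,\psi(x_1)\, N_1(x_1, x/\ve)$, $\phi,\psi \in C_0^\infty(I)$, into the weak formulation and passing to the two-scale limit identifies $\widehat u_j = N_1(x_1,\cdot)\,u_j'(x_1)$ and, taking $\psi=0$, shows that $(\lambda,u_j)$ solves the weak form of the effective problem \eqref{eq:eff-prob-remark-1}; here an averaging lemma for locally periodic functions in the thin perforated rod (in the spirit of Lemma~\ref{lm:MVT-0}) is what lets one handle the slow dependence of both the coefficients and the perforation. Strict positivity of $a^\eff(x_1)$ follows as in the proof of Lemma~\ref{lm:conv-v^eps}, and $u_j\not\equiv 0$ because the normalisation survives in the limit.

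\textbf{Convergence of the spectrum.} By the previous step every accumulation point of $\{\lambda_j^\ve\}$ is an eigenvalue of \eqref{eq:eff-prob-remark-1}. For the reverse inclusion we would, for each $k$, construct an approximate eigenfunction of \eqref{eq:prob-remark-1} from $u_k(x_1) + \ve\, N_1(x_1, x/\ve)\, u_k'(x_1)$, multiplied by a cut-off near $\Gamma_\ve^\pm$ and restricted to $\Omega_\ve$, whose Rayleigh quotient tends to $\lambda_k$; completeness of the family $\set{u_k}$ in the limiting $L^2$-space then guarantees that no eigenvalue and no multiplicity is lost. Combined with the ordering of the eigenvalues this gives $\lambda_j^\ve \to \lambda_j$ for each fixed $j$, which is (i). Statement (ii) follows by upgrading the two-scale convergence of the previous step to strong convergence: convergence of the eigenvalues together with convergence of the $L^2(d\mu_\ve)$-norms (a consequence of the normalisation) forces $\ve^{-(d-1)}\ii{\Omega_\ve} |u_j^\ve(x) - u_j(x_1)|^2\, dx \to 0$ in the standard way.

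The step expected to be most delicate is the passage to the limit: homogenization and dimension reduction must be performed simultaneously in a domain whose perforation varies with the slow variable, and it is precisely there that the mean-value property for locally periodic functions is indispensable (this is exactly the extra difficulty relative to the purely periodic case of \cite[Ch.~12]{BLP-78}). On the spectral side the subtle point is to rule out collapse of multiplicities in the limit, which is handled by the completeness of $\set{u_k}$ combined with the recovery-sequence construction described above.
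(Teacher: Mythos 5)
The paper does not actually supply a proof of this Proposition; it states only that ``the proof uses classical two-scale convergence arguments and is left to the reader.'' Your sketch follows precisely that route (a priori bounds from the min-max principle and the weak formulation, extension via $P^\ve$, two-scale convergence with respect to the singular measure $\ve^{-(d-1)}\mathbf 1_{\Omega_\ve}\,dx$, identification of the corrector and the effective one-dimensional problem, and then approximate eigenfunctions to get the full spectral convergence and strong $L^2$-convergence), so you are implementing the approach the paper intends.

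One small inaccuracy worth flagging: the test function $v(x)=\phi(x_1)+\ve\,\psi(x_1)\,N_1(x_1,x/\ve)$ cannot by itself \emph{identify} the corrector $\widehat u_j$; plugging it in yields a single relation involving $\widehat u_j$ and $N_1$, not the full cell problem. The standard argument first tests with $\ve\,\psi(x_1)\,\theta(x/\ve)$ for \emph{arbitrary} periodic $\theta\in C^\infty(\square)$ to recover the cell problem and hence $\widehat u_j=N_1(x_1,\cdot)\,\partial^{\mu_\ast}_{x_1}u_j$ (modulo a function of $x_1$ alone), and only afterwards tests with $\phi(x_1)$ to derive the effective Sturm--Liouville equation. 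Relatedly, in the thin-rod setting the $\mu_\ast$-gradient has the non-uniqueness $\Gamma_{\mu_\ast}(0)$ in the transverse directions (cf. the discussion around \eqref{eq:eff-weak-full}); writing the two-scale limit of $\nabla u_j^\ve$ as $u_j'(x_1)e_1+\nabla_y\widehat u_j$ presumes this has already been resolved. Neither point is a conceptual gap, but in a full write-up both should be made explicit, as in the proof of Lemma~\ref{lm:conv-v^eps}.
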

\end{remark}

\begin{remark}[The case when the potential is of order $\ve^{-2}$]
\label{rm:beta=2}
A different effect appears when the zero-order term in~\eqref{eq:orig-prob} is of order $\ve^{-2}$.
This case has been considered in~\cite{AlPi-2002} for a bounded domain $\Omega$ (without perforation).
For the case of a thin domain with a locally periodic perforation the proof is to be adjusted, but the main result and the method of proof remains the same.
We formulate the convergence result in this case omitting the proof.

We study the asymptotic behaviour of the solutions $(\lambda^\ve, u^\ve)$ of the following spectral problem:
\begin{equation}
\label{eq:scaling-2}
\left\{
\begin{array}{lcr}
\displaystyle
- \mop{div}(a^\ve \nabla u^\ve) +\frac{1}{\ve^2}\, c^\ve\, u^\ve = \lambda^\ve u^\ve, \quad \hfill x \in \Omega_\ve,
\\[2mm]
\displaystyle
a^\ve \nabla u^\ve\cdot n = 0, \quad \hfill x \in \Gamma_\ve\cup S_\ve,
\\[2mm]
u^\ve=0, \quad \hfill x \in \Gamma_\ve^\pm.
\end{array}
\right.
\end{equation}
The auxiliary cell eigenproblem, now with parameter $x_1$, becomes
\begin{equation}
\label{eq-p}
\left\{
\begin{array}{l}
-\mop{div}_y(a(x_1, y)\nabla_y p) + c(x_1, y)\, = \lambda(x_1)\, p, \quad \hfill y \in Y(x_1),
\\[2mm]
a(x_1, y)\nabla_y p\cdot n = 0, \quad \hfill y \in \partial Y(x_1).
\end{array}
\right.
\end{equation}
The spectrum of the last problem is discrete, the first eigenvalue $\lambda_1(x_1)$ is simple for all $x_1$, and the corresponding eigenfunction $p_1(x_1, y)$ is H\"{o}lder continuous and can be chosen positive.
We add an assumption that determines the location and the scale of concentration:
\begin{itemize}
\item[(H4)]
The first eigenvalue of the cell problem \eqref{eq-p} has a unique minimum point $\xi \in I$. Without loss of generality, we assume that $\xi=0$.
Moreover, we assume that in the vicinity of $x_1=0$,
\[
\lambda_1(x_1)= \lambda_1(0) + \frac{1}{2}\lambda_1''(0)\, |x_1|^2 + o(|x_1|^2), \quad \lambda_1''(0)>0.
\]
\end{itemize}

Now we formulate the homogenization result in this case.
\begin{proposition}
%\label{Theorem-scaling-2}
Suppose that (H1), (H2), (H4) are fulfilled.
Let $(\lambda_j^\ve, u_j^\ve)$ be the $j$th eigenpair of problem~\eqref{eq:scaling-2},
with eigenfunctions normalized by
\[
\|u_j^\ve\|_{L^2(\Omega_\ve)}^2= \ve^{d-1}\,\ve^{1/2}\, |Q|.
\]
Denote by $(\lambda_1(x_1), p_1(x_1, y))$ the principal eigenpair of the cell eigenproblem~\eqref{eq-p}.
Then
\[
\lambda_j^\ve = \frac{\lambda_1(0)}{\ve^2} + \frac{\nu_j}{\ve} + o(\ve^{-1}), \quad \ve \to 0,
\]
and
the corresponding eigenfunctions $u_j^\ve$ are approximated by $p_1\big(0, \frac{x}{\ve}\big)\, v_j \big(\frac{x}{\sqrt{\ve}}\big)$, that is
\[
\lim \limits_{\ve \to 0} \frac{1}{\ve^{1/2}\ve^{d-1}}\int\limits_{\Omega_\ve} \big|u_j^\ve(x) - p_1\big(0, \frac{x}{\ve}\big)\, v_j \big(\frac{x}{\sqrt{\ve}}\big)\big|^2\, dx = 0,
\]
where $(\nu_j, v_j(z_1))$ is the $j$th eigenpair, under suitable normalization, of the effective one-dimensional spectral problem
\begin{align*}
%\label{eff-prob-scaling-2}
-a^\eff v'' + (c^\eff + \frac{1}{2}\lambda_1''(0)|z_1|^2)\, v = \nu\, v, \quad
v \in L^2(\mathbf{R}),
\end{align*}
with
\[
c^\eff = - \frac{1}{|Y(0)|} \int \limits_{Y(0)} p_1(0,y)(\partial_{x_1}a_{1j}\partial_{y_j}p_1 + a_{1j}\partial_{x_1}\partial_{y_j}p_1 + \partial_{y_i}(a_{i1}\partial_{x_1}p_1)(0,y) dy,
\]
and $a^\eff$ is the strictly positive constant defined by
\[
a^\eff = \frac{1}{|Y(0)|} \int \limits_{Y(0)} p_1(0, y)^2\, a_{1j}(0, y) (\delta_{1j} + \partial_j N_2(y))\, dy,
\]
with the functions $N_2$ solving the auxiliary cell problem
\[
\left\{
\begin{array}{lcr}
\displaystyle
- \mop{div}(p_1(0, y)^2 a(0,y)\nabla N_2) = \partial_{y_i} (p_1(0, y)^2 a_{i 1}(0, y)), \quad \hfill y \in Y(0),
\\[2mm]
\displaystyle
p_1(0, y)^2 a(0, y)\nabla N_2\cdot n = - p_1(0, y)^2 a_{i 1}(0, y)\, n_i, \quad \hfill y \in \partial Y(0).
\end{array}
\right.
\]
\end{proposition}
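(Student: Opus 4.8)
The plan is to follow the scheme of \cite{AlPi-2002}, reusing wherever possible the machinery of Sections~\ref{sec:rescaling}--\ref{sec:conv-spectra} and adapting it to the operator obtained after a \emph{ground state substitution}. Writing an eigenfunction of \eqref{eq:scaling-2} as $u^\ve(x)=p_1\bigl(x_1,\tfrac{x}{\ve}\bigr)\,\psi^\ve(x)$ and using the cell equation \eqref{eq-p} satisfied by $p_1$, an integration by parts turns the variational form of \eqref{eq:scaling-2} into an equivalent problem for $\psi^\ve$ whose principal symmetric part is $\int p_1^2\,a^\ve\nabla\psi^\ve\cdot\nabla\varphi$ and in which the $\ve^{-2}$ term is replaced by $\ve^{-2}\bigl(\lambda_1(x_1)-\lambda_1(0)\bigr)$ together with first- and zeroth-order corrections generated by the $x_1$-derivatives of $p_1$. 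Setting $\mu^\ve=\lambda^\ve-\lambda_1(0)\ve^{-2}$ and invoking (H4), the new potential is $\ve^{-2}\bigl(\tfrac12\lambda_1''(0)x_1^2+o(x_1^2)\bigr)$; after the anisotropic rescaling $z_1=x_1/\sqrt\ve$ and multiplication of the equation by $\ve$ this becomes the quadratic potential $\tfrac12\lambda_1''(0)z_1^2$, while $-\ve\,\partial_1(a^\ve\partial_1\cdot)$ becomes $-\partial_{z_1}(a^\ve\partial_{z_1}\cdot)$, which makes the effective one-dimensional harmonic oscillator with coefficients $a^\eff$, $c^\eff$ transparent.

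Next I would establish a priori estimates. Upper bounds on $\lambda_j^\ve$ follow by inserting the trial functions $p_1\bigl(x_1,\tfrac{x}{\ve}\bigr)\,v_j\bigl(\tfrac{x_1}{\sqrt\ve}\bigr)$ (suitably truncated) into the Rayleigh quotient and averaging the oscillating coefficients by the mean-value property of Lemma~\ref{lm:MVT-0}, giving $\lambda_j^\ve\le\lambda_1(0)\ve^{-2}+\nu_j\ve^{-1}+o(\ve^{-1})$; the matching lower control on $\mu^\ve$ uses the positivity of $p_1$ and of $\lambda_1(x_1)-\lambda_1(0)$. For the rescaled factorized problem on the expanding slab $(-\tfrac{1}{2\sqrt\ve},\tfrac{1}{2\sqrt\ve})\times\ve Q$ one bounds $\psi^\ve$, $\sqrt\ve\,\partial_1\psi^\ve$ and the transverse gradient in weighted $L^2$, and --- crucially --- proves an Agmon-type estimate giving exponential decay of $\psi^\ve$ in $|z_1|$ uniformly in $\ve$; this compactness in the unbounded limit variable is what allows passing from the bounded segment $I$ to all of $\rr$.

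The passage to the limit is then carried out by two-scale convergence with the singular measure supported on the perforated fibres $Y(x_1)$, as in Section~\ref{sec:pass-to-the-limit} and \cite{Zh-2000}: using the extension operator of Remark~\ref{rm:extension}, the two-scale limits of $\psi^\ve$ and $\nabla\psi^\ve$ (the latter of the form $\partial_{z_1}\psi_0+\nabla_y\psi_1$), admissible oscillating test functions and the cell problem for $N_2$, one finds that $\psi_0$ depends only on $z_1$ and solves the stated harmonic oscillator; the identification of the constant $a^\eff$ rests on the symmetry of $a$ and the $N_2$-problem, and $c^\eff$ assembles precisely from the lower-order corrections produced by the substitution. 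Convergence of the whole spectrum --- and simplicity of $\lambda_j^\ve$ for small $\ve$, since the $\nu_j$ are simple --- then follows as in Section~\ref{sec:conv-spectra} by combining the upper bounds with a completeness/almost-orthogonality argument for $\{p_1(x_1,\tfrac{x}{\ve})v_j(\tfrac{x_1}{\sqrt\ve})\}_j$.

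The main obstacle I anticipate is the simultaneous presence of three effects absent from \cite{AlPi-2002} --- the locally periodic perforation (handled by the singular measure), the dimension reduction, and the localization on the unbounded rescaled line --- and in particular the uniform-in-$\ve$ exponential decay estimate for the rescaled eigenfunctions, which must be extracted here from the quadratic growth of $\lambda_1(x_1)-\lambda_1(0)$ away from $x_1=0$ together with the lower bound $p_1\ge\mathrm{const}>0$. A secondary delicate point is bookkeeping of all first- and zeroth-order corrections from the ground state substitution so that they combine into exactly the stated formula for $c^\eff$; this is where the regularity $c\in C^3(\overline I;C^\alpha(\square))$, and the resulting smoothness of $p_1(x_1,\cdot)$ in the slow variable, enters.
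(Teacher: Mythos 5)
The paper explicitly omits the proof of this proposition, stating only that it follows the factorization method of \cite{AlPi-2002} ``adjusted'' to the thin, locally periodically perforated geometry, together with the singular-measure machinery of Sections~\ref{sec:rescaling}--\ref{sec:conv-spectra}. Your proposal identifies exactly this plan --- ground state substitution $u^\ve = p_1(x_1, x/\ve)\,\psi^\ve$, shift by $\lambda_1(0)\ve^{-2}$, anisotropic rescaling by $\sqrt\ve$, a priori estimates with confinement from (H4), two-scale convergence with the singular measure to obtain the weighted cell problem for $N_2$ and the harmonic oscillator, and a min--max spectral convergence argument --- so it is consistent with the route the paper indicates. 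Two points worth flagging as you flesh this out: first, the ground state substitution produces cross terms $p_1\,a^\ve\nabla p_1 \cdot\nabla\psi^\ve$ whose integration by parts must be checked against the Neumann condition on $S_\ve$, which is absent from \cite{AlPi-2002} and is where the perforation genuinely enters the calculation; second, you factor with $p_1(x_1,\cdot)$ depending on the slow variable while the stated approximant uses the frozen profile $p_1(0,\cdot)$, so the passage between the two requires an explicit use of the localization of $\psi^\ve$ at $x_1 = 0$ together with the $C^1$-regularity of $p_1$ in $x_1$ (ultimately coming from the $C^3$-in-$x_1$ regularity of $c$ in (H1)). Regarding the Agmon-type decay: the paper's own proof for $\beta = 1$ does not use Agmon estimates but the weaker weighted bound $\|z_1 v^\ve\|_{L^2(\mu_\ve)} \le C$ of \eqref{eq:apriori-est}, which already yields the needed compactness in the unbounded rescaled variable via Lemma~\ref{lm:compactness-2}; the same lighter device should suffice here and would save you from proving the pointwise exponential bound.
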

\end{remark}

\begin{remark}[The flatness property in hypothesis (H3)]
In (H3) we assume that $\bar c''(0)$ is the first nonvanishing derivative of $\bar c(x_1)$ at the minimum point.
If instead the flatness of $\bar c(x_1)$ at the minimum point is $k \ge 2$, that is $\bar c^{(k)}(0) > 0$ is the first nonvanishing derivative, then $k$ is necessarily even and the rate of concentration will be $\ve^{ -1/(k + 2) }$.
This is apparent in the proof of Lemma~\ref{lm:MVT-0}.
We see that the flatter the averaged potential is, the slower the rate concentration of the eigenfunctions is.
The effective problem in this case reads
\begin{align}
\label{eq:eff-prob-remark-2}
- a^\eff\, v'' + \frac{1}{k!} \bar{c}^{(k)}(0) \, x^k \, v & = \nu \, v, &
v & \in L^2(\rr).
\end{align}
The effective coefficient $a^\eff$ is defined by \eqref{eq:a-eff}. Due to the growing potential, the operator is coercive, the spectrum of \eqref{eq:eff-prob-remark-2} is real and discrete. All the eigenvalues are positive and simple.

The following result holds.
\begin{proposition}
Suppose that $\bar c$ has a unique minimum point at $x_1=0$, and $\bar c^{(k)}(0) > 0$ is the first nonvanishing derivative.
Let $(\lambda_j^\ve, u_j^\ve)$ be the $j$th eigenpair to problem \eqref{eq:orig-prob}, and $u_j^\ve$ is normalized by $\|u_j^\ve\|_{L^2(\Omega_\ve)}^2=\ve^{1/(k+2)}\, \ve^{d-1} \, |Q|_{d-1}$.
Then for any $j$,
\[
\lambda_j^\ve = \frac{\bar{c}(0)}{\ve} + \frac{\nu_j}{\ve^{2/(k+2)}} + o(\ve^{- 2/(k+2)}),
\quad \ve \to 0,
\]
where $\nu_j$ is the $j$th eigenvalue of the effective spectral problem \eqref{eq:eff-prob-remark-2}.

The corresponding eigenfunction $u_j^\ve(x)$ converges to the eigenfunction $v_j(z_1)$ corresponding to $\nu_j$ in the following sense:
\[
\lim \limits_{\ve \to 0} \frac{1}{\ve^{1/(k+2)}\, \ve^{d-1}} \,  \ii{\Omega_\ve} \big|u_j^\ve(x) - v_j\big(\frac{x_1}{\ve^{1/(k+2)}}\big)\big|^2\, dx = 0.
\]
\end{proposition}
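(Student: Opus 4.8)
The plan is to follow, essentially line by line, the proof of Theorem~\ref{th:main-Th-short} carried out in Section~\ref{sec:proof}, replacing the quadratic profile $\frac12\bar c''(0)x_1^2$ by the degree-$k$ profile $\frac1{k!}\bar c^{(k)}(0)x_1^k$ and the concentration scale $\ve^{1/4}$ by $\ve^{1/(k+2)}$, and I only indicate where the value of $k$ intervenes. As a preliminary observation, since $x_1=0$ is a strict minimum of $\bar c$ and $\bar c^{(k)}(0)$ is the first nonvanishing derivative there, $k$ must be even and $\bar c(x_1)-\bar c(0)=\frac1{k!}\bar c^{(k)}(0)x_1^k+o(|x_1|^k)\ge0$ for $x_1$ near $0$; hence the potential in \eqref{eq:eff-prob-remark-2} tends to $+\infty$, the corresponding operator has compact resolvent, and its spectrum is real, discrete and simple.

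First I would establish the eigenvalue bounds of Section~\ref{sec:prelimin-results} in the new scale: using trial functions of the form $x\mapsto\phi(x_1/\ve^{1/(k+2)})$ with $\phi\in C_0^\infty(\rr)$ localized near the origin together with the min--max principle gives $\lambda_j^\ve\le\bar c(0)/\ve+C_j\ve^{-2/(k+2)}$, while the matching lower bound $\lambda_j^\ve\ge\bar c(0)/\ve-o(\ve^{-2/(k+2)})$ comes from $c>0$, the inequality $\bar c(x_1)\ge\bar c(0)$ and the mean--value property of Lemma~\ref{lm:MVT-0}, exactly as in the case $k=2$. Next, as in Section~\ref{sec:rescaling}, I would set $\lambda_j^\ve=\bar c(0)/\ve+\mu_j^\ve\ve^{-2/(k+2)}$ and pass to the stretched variable $z_1=x_1/\ve^{1/(k+2)}$, so that $\partial_{x_1}^2=\ve^{-2/(k+2)}\partial_{z_1}^2$ and, after the Taylor expansion $\bar c(x_1)-\bar c(0)\sim\frac1{k!}\bar c^{(k)}(0)\ve^{k/(k+2)}z_1^k$ and the identity $\ve^{k/(k+2)-1}=\ve^{-2/(k+2)}$, the zero-order term $\frac1\ve(c^\ve-\bar c(0))$ contributes at the same order $\ve^{-2/(k+2)}$; dividing by $\ve^{-2/(k+2)}$ turns \eqref{eq:orig-prob} into the analogue of the rescaled problem \eqref{eq:rescaled-prob} with eigenvalue $\mu_j^\ve$ on the dilated rod $\ve^{-1/(k+2)}I\times\ve Q$, and the stated normalization makes the rescaled eigenfunctions $v_j^\ve$ have $L^2$-norm tending to $1$.

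Then I would carry out the a priori estimates of Section~\ref{sec:a-priori-estim} and the passage to the limit of Section~\ref{sec:pass-to-the-limit}: boundedness of $\mu_j^\ve$ from the first step, uniform $H^1$ bounds on $v_j^\ve$, and --- using $\bar c(x_1)-\bar c(0)\gtrsim|x_1|^k$ away from the origin --- an exponential-type decay estimate ruling out loss of mass of $v_j^\ve$ to $z_1=\pm\infty$, which is what forces the two-scale limit to lie in $L^2(\rr)$. Two-scale convergence in the variable space with singular measure (Section~\ref{sec:two-scale-convergence}) then homogenizes the transverse oscillations through the cell problem \eqref{eq:N} evaluated at $x_1=0$, producing the constant $a^\eff$ of \eqref{eq:a-eff}, while the mean--value property of Lemma~\ref{lm:MVT-0}, whose proof already allows for $k$-th order flatness, gives $\ve^{-2/(k+2)}\,\frac1\ve(c^\ve-\bar c(0))\rightharpoonup\frac1{k!}\bar c^{(k)}(0)z_1^k$ in the sense needed to identify the limit spectral problem as \eqref{eq:eff-prob-remark-2}. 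Finally, reproducing Section~\ref{sec:conv-spectra}, I would use the orthonormal basis $\{v_j\}$ of $L^2(\rr)$ to build quasimodes $v_j(x_1/\ve^{1/(k+2)})$ for \eqref{eq:orig-prob}, show that every $\nu_j$ is attained as a limit, combine this with the compactness above to exclude spurious accumulation points, and deduce simplicity of $\lambda_j^\ve$ for small $\ve$ from simplicity of the $\nu_j$; the claimed expansion of $\lambda_j^\ve$ and the $L^2$-convergence of $u_j^\ve$ to $v_j(x_1/\ve^{1/(k+2)})$ then follow.

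The step I expect to be the main obstacle is the a priori and tightness analysis: for large $k$ the averaged potential $\bar c(x_1)-\bar c(0)$ grows only like $|x_1|^k$ in the original variable, i.e.\ like a fixed power after rescaling, so one must check carefully that this comparatively weak confinement still prevents the rescaled eigenfunctions from spreading, and that the Taylor remainder $o(|x_1|^k)$, once rescaled, is genuinely $o(\ve^{-2/(k+2)})$. Both are controlled by Lemma~\ref{lm:MVT-0} together with the choice of scale $\ve^{1/(k+2)}$, which is precisely the exponent that balances the $\ve^{-2/(k+2)}$ generated by the longitudinal Laplacian against the $\ve^{-1}|x_1|^k\sim\ve^{-2/(k+2)}$ generated by the averaged potential at distances $|x_1|\sim\ve^{1/(k+2)}$ from the minimum.
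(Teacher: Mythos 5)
Your sketch follows exactly the route the paper intends: the Proposition is stated in a Remark without its own proof, with the authors pointing to Lemma~\ref{lm:MVT-0} and the proof of Theorem~\ref{th:main-Th-short} for the mechanism, and your outline is precisely that adaptation. Your exponent arithmetic is correct — $\ve^{k/(k+2)-1}=\ve^{-2/(k+2)}$ is the balance that pins down the concentration scale $\gamma=1/(k+2)$ in the minmax test functions of Section~\ref{sec:prelimin-results}, and after dividing the rescaled equation by $\ve^{-2/(k+2)}$ the longitudinal diffusion and the averaged potential do contribute at the same order. You also correctly identify the key technical point (controlling the Taylor remainder and the confinement for large $k$); in the paper that is handled by Lemma~\ref{lm:MVT-0} together with Lemma~\ref{lm:equivav-funct}, the latter giving the two-sided comparison $C\,x_1^k\le\bar c(x_1)-\bar c(0)\le C^{-1}x_1^k$ on all of $\bar I$ — not just an asymptotic expansion near $0$ — so the a priori bound $\|z_1^{k/2}v^\ve\|_{L^2(\mu_\ve)}\le C$ and the resulting tightness are obtained without ever needing to know in advance that $v^\ve$ localizes.

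One small slip: to keep a single fast scale and reuse the singular-measure two-scale machinery of Section~\ref{sec:two-scale-convergence} verbatim, you should rescale \emph{all} coordinates by $\ve^{1/(k+2)}$, so the rescaled rod is $\ve^{-1/(k+2)}I\times\ve^{(k+1)/(k+2)}Q$ (not $\ve^{-1/(k+2)}I\times\ve Q$), the fast variable is $z/\ve^{(k+1)/(k+2)}=x/\ve$, and the measure $\mu_\ve$ carries the weight $\ve^{-(d-1)(k+1)/(k+2)}/|Q|_{d-1}$. For $k=2$ this recovers $\ve^{-1/4}I\times\ve^{3/4}Q$, $z/\ve^{3/4}$, and the $\ve^{-3(d-1)/4}$ of \eqref{def:measure}. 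With that correction, your proposal is essentially the intended proof.
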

\end{remark}

\begin{remark}[The location of the minimum point in hypothesis (H3)]
Assuming that $\bar c(x_1)$ has its unique minimum at $x_1 = 0 \in I$
means that we treat the general case of minimum point in the interior of $I$.
On the other hand, if the minimum point is attained on the boundary, at $x_1 = -\frac{1}{2}$ or $x_1 = \frac{1}{2}$, then the homogenized equation is posed on a half-space and inherits the homogeneous Dirichlet condition.
That is, equation \eqref{eq:eff-problem} should be replaced by either of
\begin{align*}
-a^\mathrm{eff} v'' + \textstyle{\frac{1}{2}}\bar c''\big(\pm \textstyle{\frac{1}{2}}\big)\,x^2 v & = \nu v, & v \in H^1_0\big(\big\{ x \in \rr : x \gtrless \pm \textstyle{\frac{1}{2}} \big\}\big).
\end{align*}
\end{remark}

%%%%%%%%%%%%%%%%%%%%%%%%%%%%%%%%%%%%%%%%%%%%%%%%%%%%%%%%%%%%%%%%%%%%%%%%%%%%%%%%%%%%%%%%%%%%%%%%%%%%%%%%%%%%%%%%%%%%%%%%%%%%%%%%%%%%%%%%%%%%%%%%%%%
\section{Proof of Theorem~\ref{th:main-Th-short}}
\label{sec:proof}

In this section we prove Theorem~\ref{th:main-Th-short}. The proof is organized as follows.
First we derive estimates for the eigenvalues $\lambda_j^\ve$ (Section~\ref{sec:prelimin-results}).
Based on these estimates, we make a suitable change of variables and rescale the original problem (Section~\ref{sec:rescaling}).
In Section~\ref{sec:a-priori-estim} we obtain a priori estimates for the rescaled spectral problem and then pass to the limit in Section~\ref{sec:pass-to-the-limit}.
Lastly, we deduce the convergence of spectra (Section~\ref{sec:conv-spectra}).

\subsection{A priori estimates for eigenvalues}%$\lambda_n^\ve$}
\label{sec:prelimin-results}

The goal of this section is to obtain estimates for the eigenvalues $\lambda_j^\ve$ of problem \eqref{eq:orig-prob}. The following result provides not only the information about the behaviour of the eigenvalues, but also gives an idea about the right scaling for eigenfunctions.

\begin{lemma}
\label{lm:est-lambda^eps}
Suppose that (H1)--(H3) are satisfied.
Then there exist positive constants $C_1$ and $C_2(j)$ that are independent of $\ve$ such that
\be
\label{eq:lambdaestimate}
-\frac{C_1}{\sqrt \ve} < \lambda_j^\ve - \frac{\bar c(0)}{\ve} \le \frac{C_2(j)}{\sqrt \ve},
\ee
for all $j$ and all sufficiently small $\ve$.
\end{lemma}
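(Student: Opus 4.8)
The plan is to obtain the two-sided bound via the variational (min-max) characterization of $\lambda_j^\ve$, working on the extended rod $G_\ve$ and exploiting the normalization of the perforated cell. For the \emph{upper bound}, I would construct an explicit $j$-dimensional trial subspace of $H_0^1(G_\ve,\Gamma_\ve^\pm)$ (equivalently, test functions that vanish on $\Gamma_\ve^\pm$ and are constant in the transverse variable $x'$) of the form $w_k(x) = \chi(x_1)\,v_k(x_1/\ve^{1/4})$, where $v_k$ is the $k$th harmonic oscillator eigenfunction from \eqref{eq:eff-problem} and $\chi$ is a smooth cutoff localizing to a neighbourhood of $x_1=0$ of size $\ve^{1/4}\log(1/\ve)$ or so (the exponential decay of $v_k$ makes the truncation error negligible). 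Since these functions are transversally constant, the gradient term contributes only $\int |w_k'|^2$, which after the change of variable $z_1 = x_1/\ve^{1/4}$ scales like $\ve^{d-1}\ve^{-1/4}\cdot\ve^{1/2}\int|v_k'|^2$; the potential term $\frac1\ve\int c^\ve w_k^2$ is handled by replacing $\frac1\ve c^\ve$ by $\frac1\ve\bar c(0) + \frac1\ve(\bar c(x_1)-\bar c(0)) + \frac1\ve(c^\ve-\bar c)$. The first piece gives the leading $\bar c(0)/\ve$; the second, using $\bar c(x_1)-\bar c(0)\le C x_1^2$ near the minimum and the support of $\chi$, contributes $O(\ve^{-1/2})$; the oscillatory remainder $c(x_1,x/\ve)-\bar c(x_1)$ must be shown to contribute only $o(\ve^{-1/2})$ after integration against the slowly varying $w_k^2$ — here one invokes the mean-value/averaging property (a baby version of Lemma~\ref{lm:MVT-0}), noting that integrating an oscillatory zero-mean function against a function varying on scale $\ve^{1/4}\gg\ve$ gains a factor $\ve/\ve^{1/4}$. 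Dividing by the normalization $\|w_k\|_{L^2}^2 \sim \ve^{1/4}\ve^{d-1}|Q|$ yields $\lambda_j^\ve \le \bar c(0)/\ve + C_2(j)/\sqrt\ve$.

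For the \emph{lower bound}, the key point is that $\frac1\ve c^\ve \ge \frac1\ve \min c \ge 0$ is too crude; instead I would use the structure $\frac1\ve c^\ve = \frac1\ve\bar c(0) + \frac1\ve(c^\ve-\bar c(0))$ and show that the operator $A^\ve - \frac{\bar c(0)}{\ve}$ is bounded below by $-C_1/\sqrt\ve$. Equivalently, by min-max with $j=1$ it suffices to prove $\int_{\Omega_\ve} a^\ve\nabla u\cdot\nabla u + \frac1\ve\int_{\Omega_\ve}(c^\ve-\bar c(0))u^2 \ge -\frac{C_1}{\sqrt\ve}\int_{\Omega_\ve}u^2$ for all admissible $u$. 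The difficulty is that $c^\ve-\bar c(0)$ is negative somewhere, but only by $O(|x_1|^2)$ on average plus an oscillatory term; after extending $u$ to $G_\ve$, one should estimate $\frac1\ve\int_{G_\ve}(\bar c(x_1)-\bar c(0))u^2$ from below using $\bar c(x_1)-\bar c(0)\ge -C$ on all of $I$ — wait, that only gives $-C/\ve$, not $-C/\sqrt\ve$. So the genuine mechanism must be: the Dirichlet gradient energy, via a one-dimensional Poincaré/uncertainty-principle inequality of the form $\int u'^2 + \frac1\ve\int(\bar c(x_1)-\bar c(0))u^2 \ge -\frac{C}{\sqrt\ve}\int u^2$ (this is exactly the harmonic-oscillator scaling: a potential well of depth $\sim x_1^2/\ve$ with gradient cost forces the ground-state energy above $-C\ve^{-1/2}$, since the optimal localization scale balances $\ve^{-1}\ell^2 \sim \ell^{-2}$, i.e. $\ell\sim\ve^{1/4}$ and the energy $\sim\ve^{-1/2}$). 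I would make this rigorous by splitting $I$ into the region $|x_1|\le\delta$ where $\bar c(x_1)-\bar c(0)\le -\eta$ is impossible (it's $\ge0$ near $0$) and handling the far region where $\bar c-\bar c(0)\ge c_0>0$ so the potential term is positive there; combined with absorbing the oscillatory remainder $c^\ve-\bar c$ using again the averaging lemma applied at scale $\ve$ (with a modest loss).

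The \textbf{main obstacle} I anticipate is the lower bound, specifically establishing the weighted one-dimensional inequality $\int_I a^\eff u'^2\,dx_1 + \frac1\ve\int_I(\bar c(x_1)-\bar c(0))u^2\,dx_1 \ge -\frac{C_1}{\sqrt\ve}\int_I u^2\,dx_1$ uniformly in $\ve$, and then transferring it faithfully from the one-dimensional reduction back to the full perforated domain $\Omega_\ve$ — this transfer needs the extension operator of Remark~\ref{rm:extension} together with a control of the transverse oscillation of $u$ on each $\ve$-cell (a cell-wise Poincaré inequality, losing a harmless $O(1)$ factor), plus the averaging estimate to replace $c^\ve$ by $\bar c(x_1)$ with an error $o(\ve^{-1/2})\|u\|^2$. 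The upper bound, by contrast, is essentially a direct trial-function computation and should be routine once the averaging property for test functions oscillating slower than the microstructure is in hand.
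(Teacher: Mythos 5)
Your upper-bound plan is essentially the paper's: a test function concentrated at scale $\ve^{1/4}$ near $x_1=0$, averaging $c^\ve$ to $\bar c$ via Lemma~\ref{lm:MVT-0}, and balancing the gradient cost $\sim\ve^{-1/2}$ against the potential cost $\sim\ve^{-1/2}$. The paper works with a generic $v\in C_0^\infty(I)$ scaled by $\ve^{-\gamma}$ and optimizes $\gamma$, then handles $j>1$ by orthogonalizing against $u_1^\ve,\dots,u_{j-1}^\ve$; your explicit harmonic-oscillator trial subspace with a cutoff is a fine variant and gives the same result, so no issue there.

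Your lower bound, however, rests on a factual slip about hypothesis~(H3), and this sends you down an unnecessarily long road. You write that $\bar c(x_1)-\bar c(0)\ge -C$ ``only gives $-C/\ve$'' and conclude that the ``genuine mechanism'' must be a one-dimensional uncertainty-principle inequality trading gradient energy against a potential well. But~(H3) says $x_1=0$ is the \emph{unique global minimum} of $\bar c$ on $I$, hence $\bar c(x_1)-\bar c(0)\ge 0$ pointwise: after averaging, the potential term is \emph{nonnegative}, not merely bounded below by $-C$. There is no well to climb out of, and the gradient energy plays no compensating role. The paper's lower bound simply drops the (nonnegative) gradient term from the Rayleigh quotient, applies Lemma~\ref{lm:MVT-0} to replace $c^\ve$ by $\bar c(x_1)$ in both numerator and denominator, and then bounds $\bar c(x_1)\ge\bar c(0)$. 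The entire $-C_1/\sqrt\ve$ deficit comes from the averaging-lemma error $O(\ve\,\|u\|\,\|\nabla u\|)$, closed by bootstrapping $\|\nabla u_1^\ve\|^2=O(\lambda_1^\ve)=O(\ve^{-1})$ from the already-established upper bound, so the error is $O(\ve^{1/2})$ in the integrals and $O(\ve^{-1/2})$ after dividing by $\ve$. Your proposed weighted inequality is not false (both sides are nonnegative under~(H3), so it holds trivially), but the rationale you gave for needing it is wrong, and following it would obscure the actual, much shorter argument. Re-reading~(H3) and dropping the gradient term would put you directly on the paper's route.
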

\begin{proof}

We begin by estimating the first eigenvalue $\lambda_1^\ve$.
By the minmax principle (see \cite{CouHi-53}, \cite{Mi-65}),
\be
\label{eq:min-max-lambda_1}
 \lambda_1^\ve = \inf \limits_{v \in H_0^1(\Omega_\ve, \Gamma_\ve^\pm)\setminus \{0\}}
 \frac{\int\limits_{\Omega_\ve} a^\ve \nabla v \cdot \nabla v \,dx + \frac{1}{\ve} \int\limits_{\Omega_\ve} c^\ve v^2 \,dx }{\|v\|_{L^2(\Omega_\ve)}^2}.
\ee

To obtain a rough estimate from above one can take a test function $v(x_1) \in C_0^\infty(I)$ in \eqref{eq:min-max-lambda_1} and get
\[
\lambda_1^\ve \le \frac{C}{\ve}
\]
with some constant $C$ independent of $\ve$.

In order to obtain the claimed estimate, we need to make a better choice of test function.
Applying Lemma~\ref{lm:MVT-0} in \eqref{eq:min-max-lambda_1} gives
\begin{align}
\label{eq:aux-1}
\lambda_1^\ve  \le \|w_\ve\|_{L^2(\Omega_\ve)}^{-2} \Big(\int \limits_{\Omega_\ve} a^\ve \nabla w_\ve \cdot \nabla w_\ve \,dx & + \frac{1}{\ve |\Box|} \int \limits_{G_\ve} |Y(x_1)|\, \bar c(x_1) w_\ve^2 \,dx \\
 & + C\, \|w_\ve\|_{L^2(G_\ve)}\, \|\nabla w_\ve\|_{L^2(G_\ve)}\Big),\notag
\end{align}
for any $w_\ve \in H_0^1(\Omega_\ve, \Gamma_\ve^\pm)$.

One can see that to minimize the expression on the right hand side, the function $w_\ve$ should concentrate in the vicinity of the minimum point of $\bar c$. We choose $w_\ve = v(x_1/\ve^\gamma)$, $v \in C_0^\infty(I)$, $\|v\|_{L^2(\mathbf R)}=1$, with some $0<\gamma<1$.
With the help of the assumptions (H2)--(H3) we obtain:
\begin{align*}
\int\limits_{\Omega_\ve} a^\ve \nabla v(\frac{x_1}{\ve^\gamma}) \cdot \nabla v(\frac{x_1}
       {\ve^\gamma}) \,dx & = O(\ve^{d-1-\gamma}),\\
\int\limits_{G_\ve} |Y(x_1)|\, (\bar c(x_1) - \bar c(0))\, v(\frac{x_1}{\ve^\gamma})^2 \,dx & =  O(\ve^{d-1+3\gamma}), \\
\int\limits_{\Omega_\ve} v(\frac{x_1}{\ve^\gamma})^2 \,dx  = \frac{1}{|\Box|}\int \limits_{G_\ve} |Y(x_1)|v(\frac{x_1}{\ve^\gamma})^2 \,dx + O(\ve^d)& = O(\ve^{d-1+\gamma}).
\end{align*}
Using the above estimates in \eqref{eq:aux-1} gives
\begin{align*}
%\label{eq:aux-2}
\lambda_1^\ve \le \frac{\bar c(0)}{\ve} + C( \ve^{-2\gamma}  + \ve^{2\gamma - 1} ).
\end{align*}
The best choice of $\gamma$ for the considered type of test function is therefore $\gamma=1/4$, and
\[
\lambda_1^\ve \le \frac{\bar c(0)}{\ve} + C\,\ve^{-1/2}.
\]
To be able to estimate the following eigenvalues $\lambda_j^\ve$, $j=2, 3, \cdots$, one should choose a test function that concentrates in a vicinity of $x_1=0$ and is orthogonal to the first $j-1$ eigenfunctions $u_k^\ve$, $k=1, \cdots, j-1$.
In the case of the second eigenvalue, for example, it will be
\[
w_\ve(x) = v(\frac{x_1}{\ve^\gamma}) - u_1^\ve(x)\, \int \limits_{\Omega_\ve} v(\frac{x_1}{\ve^\gamma}) \, u_1^\ve(x)\, dx,
\]
which for a suitable $v$ is not zero.
The upper bound for $\lambda_j^\ve$ then follows by similar arguments as $\lambda_1^\ve$.
We omit the details.

We proceed with the estimate from below in~\eqref{eq:lambdaestimate} for $\lambda_1^\ve$.
Let $u_1^\ve$ be a first eigenfunction normalized by \mbox{$\| u_1^\ve \|_{L^2(G_\ve)} = 1$}.
Then by the positive definiteness of $a(x_1, y)$,
\begin{align}\label{eq:aux5}
\lambda_1^\ve & > \frac{1}{\ve} \frac{\int\limits_{\Omega_\ve} c^\ve (u_1^\ve)^2 \,dx}{\int\limits_{\Omega_\ve} (u_1^\ve)^2 \,dx}.
\end{align}
By Lemma~\ref{lm:MVT-0}, using that $\bar c(x_1)$ has its unique minimum at $0$,
\begin{align*}
\int\limits_{\Omega_\ve} c^\ve (u_1^\ve)^2 \, dx & > \frac{\bar c(0)}{|\Box|}\int\limits_{G_\ve} |Y(x_1)| (u_1^\ve)^2 \, dx  -C\ve \| \nabla u_1^\ve \|_{L^2(G_\ve)},\\
\int\limits_{\Omega_\ve} (u_1^\ve)^2 \, dx  & = \frac{1}{|\Box|}\int\limits_{G_\ve} |Y(x_1)| (u_1^\ve)^2 \, dx + O(\ve   \| \nabla u_1^\ve \|_{L^2(G_\ve)}).
\end{align*}
From the upper bound in~\eqref{eq:lambdaestimate},
\begin{align*}
\| \nabla u_1^\ve \|_{L^2(G_\ve)}^2 & = O(\lambda_1^\ve) \subset O(\ve^{-1}).
\end{align*}
It follows from \eqref{eq:aux5} that
\begin{align*}
\lambda_j^\ve \ge \lambda_1^\ve \ge \frac{\bar c(0)}{\ve} - C \ve^{- 1/2},
\end{align*}
which completes the proof.
\end{proof}

\begin{remark}[{Concentration of eigenfunctions}]
In the derivation of the upper bound for $\lambda_j^\ve$, we used a test function concentrated at $x_1=0$, namely a test function of the form $v(\ve^{-1/4}x_1)$.
Using the obtained estimates we can immediately deduce that the eigenfunctions of problem \eqref{eq:orig-prob} do concentrate in the vicinity of the minimum point of $\bar{c}(x_1)$.
This is an independent observation which will not be used in the proof of Theorem~\ref{th:main-Th-short}.
\begin{proposition}
\label{lm:concentration}
The eigenfunctions $u_j^\ve$ normalized by \mbox{$\| u_j^\ve \|_{L^2(\Omega_\ve)} = 1$} concentrate in the vicinity of the unique minimum point of $\bar c$ in the sense that for all $\delta > 0$ there exists $\ve_0(j) > 0$ such that
for all positive $\ve < \ve_0$, \mbox{$\| u^\ve_j \|_{L^2(\Omega_\ve \setminus \{ x : |x_1| < \delta \})} < \delta$}.
\end{proposition}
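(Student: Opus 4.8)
The plan is to exploit the already-established two-sided estimate on $\lambda_j^\ve$ from Lemma~\ref{lm:est-lambda^eps} together with the Rayleigh quotient to show that, away from the minimum point of $\bar c$, the potential term forces the eigenfunction to be small. First I would work with the un-normalized eigenfunction and its extension to $G_\ve$, and use the variational identity \eqref{eq:var-orig-prob} with $v = u_j^\ve$ to get
\begin{align*}
\int\limits_{\Omega_\ve} a^\ve \nabla u_j^\ve \cdot \nabla u_j^\ve \, dx + \frac{1}{\ve}\int\limits_{\Omega_\ve} c^\ve (u_j^\ve)^2 \, dx = \lambda_j^\ve \int\limits_{\Omega_\ve} (u_j^\ve)^2 \, dx.
\end{align*}
Dropping the (nonnegative) gradient term and invoking the mean-value property (Lemma~\ref{lm:MVT-0}) to replace $\int_{\Omega_\ve} c^\ve (u_j^\ve)^2\,dx$ by $\frac{1}{|\Box|}\int_{G_\ve}|Y(x_1)|\bar c(x_1)(u_j^\ve)^2\,dx$ up to an error controlled by $\ve\|\nabla u_j^\ve\|_{L^2(G_\ve)}$, and using $\|\nabla u_j^\ve\|_{L^2(G_\ve)}^2 = O(\lambda_j^\ve) = O(\ve^{-1})$ from the upper bound in \eqref{eq:lambdaestimate}, yields
\begin{align*}
\frac{1}{\ve|\Box|}\int\limits_{G_\ve} |Y(x_1)|\,\bar c(x_1)\,(u_j^\ve)^2 \, dx \le \lambda_j^\ve \int\limits_{\Omega_\ve}(u_j^\ve)^2\,dx + C\,\ve^{-1/2}.
\end{align*}

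Next I would split the region $\{|x_1|<\delta\}$ from its complement. On $\{|x_1|\ge\delta\}$, hypothesis (H3) — the unique global minimum of $\bar c$ at $0$ — gives a gap $\bar c(x_1) \ge \bar c(0) + \kappa(\delta)$ with $\kappa(\delta)>0$, while on $\{|x_1|<\delta\}$ we simply use $\bar c(x_1)\ge\bar c(0)$. Combining the two pieces, the left-hand side is bounded below by $\frac{\bar c(0)}{\ve}\cdot\frac{1}{|\Box|}\int_{G_\ve}|Y(x_1)|(u_j^\ve)^2\,dx + \frac{\kappa(\delta)}{\ve}\cdot\frac{1}{|\Box|}\int_{G_\ve\cap\{|x_1|\ge\delta\}}|Y(x_1)|(u_j^\ve)^2\,dx$. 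Using the mean-value property once more to compare $\frac{1}{|\Box|}\int_{G_\ve}|Y(x_1)|(u_j^\ve)^2\,dx$ with $\int_{\Omega_\ve}(u_j^\ve)^2\,dx = 1$ (the normalization), and the upper bound $\lambda_j^\ve \le \frac{\bar c(0)}{\ve} + \frac{C_2(j)}{\sqrt\ve}$, the leading $\frac{\bar c(0)}{\ve}$ terms cancel and we are left with
\begin{align*}
\frac{\kappa(\delta)}{\ve}\,\|u_j^\ve\|_{L^2(\Omega_\ve\cap\{|x_1|\ge\delta\})}^2 \le C(j,\delta)\,\ve^{-1/2},
\end{align*}
hence $\|u_j^\ve\|_{L^2(\Omega_\ve\cap\{|x_1|\ge\delta\})}^2 \le C(j,\delta)\sqrt\ve \to 0$, which is stronger than the claimed bound and in particular is $<\delta$ for $\ve<\ve_0(j,\delta)$.

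The main obstacle is bookkeeping the error terms from Lemma~\ref{lm:MVT-0} carefully enough that, after the cancellation of the two $\bar c(0)/\ve$ contributions (each of size $\ve^{-1}$), the residual is genuinely $o(\ve^{-1})$ and not merely $O(\ve^{-1})$; this is exactly where one needs the sharp form of the mean-value property together with the $\|\nabla u_j^\ve\|_{L^2(G_\ve)} = O(\ve^{-1/2})$ gradient bound, and it is the reason the normalization $\|u_j^\ve\|_{L^2(\Omega_\ve)}=1$ rather than \eqref{eq:norm-cond-u^eps} is convenient here. A minor point is passing between $\Omega_\ve$ and $G_\ve$ norms, handled by the extension operator of Remark~\ref{rm:extension}, and ensuring $\ve_0$ may depend on $j$ through $C_2(j)$, which is harmless since the statement is for fixed $j$.
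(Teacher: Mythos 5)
Your argument is correct and uses essentially the same ingredients as the paper's proof (the two-sided eigenvalue bound of Lemma~\ref{lm:est-lambda^eps}, the mean-value Lemma~\ref{lm:MVT-0}, the gradient estimate $\|\nabla u_j^\ve\|_{L^2(G_\ve)}=O(\ve^{-1/2})$, and the positive gap $\bar c(x_1)-\bar c(0)\ge\kappa(\delta)$ away from the minimum), only rearranged from a contradiction into a direct estimate. As a small bonus your version makes the quantitative rate $\|u_j^\ve\|^2_{L^2(\Omega_\ve\setminus\{|x_1|<\delta\})}=O(\sqrt\ve)$ explicit, which the paper's contradiction argument leaves implicit; the one step you should spell out is the passage from $\frac{1}{|\square|}\int_{G_\ve\cap\{|x_1|\ge\delta\}}|Y(x_1)|(u_j^\ve)^2\,dx$ to $\|u_j^\ve\|^2_{L^2(\Omega_\ve\cap\{|x_1|\ge\delta\})}$, which uses the uniform lower bound $|Y(x_1)|\ge c_0|\square|>0$ together with the fact that the extension coincides with $u_j^\ve$ on $\Omega_\ve$.
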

\begin{proof}
Suppose that $\| u_j^\ve \|_{L^2(\Omega_\ve)} = 1$ and $u^\ve_j$ does not concentrate.
Then there exists $\delta > 0$ such that for all $\ve_0 > 0$ there exists a positive
$\ve < \ve_0$ such that
\begin{align}\label{eq:supnot}
\int\limits_{G_\ve \setminus \{ x : |x_1| < \delta \}} |Y(x_1)| (u^\ve_j)^2  \, dx > \delta.
\end{align}
Let $\ve_0$ be small enough for Lemma~\ref{lm:est-lambda^eps} (and therefore also Lemma~\ref{lm:MVT-0}) to apply with $\ve < \ve_0$.
By normalization, $\| \nabla u_j^\ve \|_{L^2(G_\ve)} \le C\ve^{-1/2}$, and
Lemma~\ref{lm:MVT-0} gives
\begin{align*}
\lambda_j^\ve & > \frac{1}{\ve} \frac{ \int\limits_{\Omega_\ve} c^\ve (u_j^\ve)^2 \,dx }{ \int\limits_{\Omega_\ve} (u_j^\ve)^2 \,dx }
\ge \frac{1}{\ve} \frac{\int\limits_{G_\ve}  |Y(x_1)| \bar c(x_1) (u_j^\ve)^2 \,dx}{{ \int\limits_{G_\ve} |Y(x_1)|(u_j^\ve)^2 \,dx }} - C\ve^{ - \frac{1}{2}}.
\end{align*}
On the other hand,
\begin{align*}
\int\limits_{G_\ve} \bar c(x_1) |Y(x_1)| (u_j^\ve)^2 \,dx
& >
\bar c(0)  \int\limits_{G_\ve} |Y(x_1)|(u_j^\ve)^2 \,dx
+
\xi_\delta \int \limits_{G_\ve \setminus \{ x : |x_1| < \delta \}} |Y(x_1)| (u^\ve_j)^2  \, dx,
\end{align*}
where $\xi_\delta := \inf_{x_1 \in I \setminus (-\delta,\delta)} (\bar c(x_1) - \bar c(0)) > 0$ gives under hypothesis \eqref{eq:supnot},
\begin{align}\label{eq:absurdity}
\lambda_j^\ve - \frac{\bar c(0)}{\ve} > \frac{\delta \xi_\delta}{\ve} - C\ve^{- \frac{1}{2}}.
\end{align}
Since $\delta \xi_\delta > 0$ the inequality \eqref{eq:absurdity} contradicts the estimate from above in Lemma \ref{lm:est-lambda^eps} for any choice of $\ve_0$ small enough.
Proposition~\ref{lm:concentration} is proved.
\end{proof}
\end{remark}

%%%%%%%%%%%%%%%%%%%%%%%%%%%%%%%%%%%%%%%%%%%%%%%%%%%%%%%%%%%%%%%%%%%%%%%%%%%%%%%%%%%%%%%%%%%%%%%%%%%%%%%%%%%%%%%%%%%%%%%%%%%%%%%
%%%%%%%%%%%%%%%%%%%%%%%%%%%%%%%%%%%%%%%%%%%%%%%%%%%%%%%%%%%%%%%%%%%%%%%%%%%%%%%%%%%%%%%%%%%%%%%%%%%%%%%%%%%%%%%%%%%%%%%%%%%%%%%

\subsection{Singular measures and two-scale convergence}
\label{sec:two-scale-convergence}
Since the domain under consideration is asymptotically thin, it is convenient to use the singular measures technique, which was introduced independently by V.~Zhikov in \cite{Zh-2000} (analytical approach) and by G.~Bouchitt\'{e}, I.~Fragal\`{a} in \cite{BouFra-01} (geometrical approach).
We will follow the approach presented in \cite{Zh-2000} (see also \cite{ChPiSh-07}).
For the reader's convenience we include the essential definitions and main results from the theory of spaces with singular measures adapted to our case.
All the proofs follow the lines of the corresponding results in \cite{Zh-2000} and \cite{ChPiSh-07}, and are not reproduced here.

We define a Radon measure on $\rr^d$ by
\begin{equation}
\label{def:measure}
\mu_\ve(B) = \frac{\ve^{-\frac{3}{4}(d-1)}}{|Q|_{d-1}} \int \limits_{B} \chi_{\ve^{-\frac{1}{4}}G_\ve}(x) \, dx,
\end{equation}
for all Borel sets $B$,
where $\chi_{\ve^{-\frac{1}{4}}G_\ve}(x)$ is the characteristic function of the rescaled nonperforated rod $\ve^{-\frac{1}{4}}G_\ve$; $dx$ is a usual $d$-dimensional Lebesgue measure. Then $\mu_\ve$ converges weakly to the measure $\mu_\ast= dx_1 \times \delta(x')$, as $\ve \to 0$.
Indeed, let $\varphi \in C_0(\rr^d)$ and let $\ve$ be small enough such that the projection of $\mop{supp}\varphi$ on $\mathbf R$ is a subset of $\ve^{-1/4}I$.
Then
\begin{align*}
& \left| \, \int\limits_{\rr^d} \varphi(x) \, dx_1 \times d\delta(x') - \int\limits_{\rr^d} \varphi(x) \, d\mu_\ve(x) \right| =
\left| \int\limits_{\rr}  \varphi(x_1,0) \chi_{\ve^{-1/4}I}(x_1) \, dx_1 \right.
\\
& \quad
\left. - \ve^{-\frac{3(d-1)}{4}}\frac{1}{|\Box|}\int\limits_{\rr}\int\limits_{\rr^{d-1}} \varphi(x) \chi_{\ve^{-1/4}I}(x_1) \chi_{\ve^{3/4}Q}(x') \, dx_1 dx' \right| \\
& \quad =
\ve^{-\frac{3(d-1)}{4}} \frac{1}{|\Box|}\left| \int\limits_{\rr}\int\limits_{\rr^{d-1}} \big(\varphi(x_1,0) - \varphi(x)\big) \chi_{\ve^{-1/4}I}(x_1) \chi_{\ve^{3/4}Q}(x') \, dx_1dx' \right|.
\end{align*}
Let $\gamma > 0$ be given and let $\ve$ be small enough such that $x' \in \ve^{3/4}Q$ implies $|\varphi(x_1,0) - \varphi(x)| < \gamma$ using the uniform continuity of $\varphi$.
Then
\begin{align*}
\left| \, \int\limits_{\rr^d} \varphi(x) \, dx_1 \times d\delta(x') - \int\limits_{\rr^d} \varphi(x) \, d\mu_\ve(x) \right|
& \le \gamma|I|.
\end{align*}
Since $\gamma$ was arbitrary, we conclude that $d\mu_\ve$ converges weakly to $dx_1 \times \delta(x')$.

For any $\ve$, the space of Borel measurable functions $g(x)$ such that
\[
\int \limits_{\mathbf{R}^d} g(x)^2\, d\mu_\ve(x) < \infty,
\]
is denoted by $L^2(\mathbf{R}^d, \mu_\ve)$.

Let us also recall the definition of the Sobolev space with measure.
\begin{definition}
A function $g\in L^2(\mathbf{R}^d, \mu_\ve)$ is said to belong to the space $H^1(\mathbf{R}^d, \mu_\ve)$ if there exists a vector function $z \in L^2(\mathbf{R}^d, \mu_\ve)^d$ and a sequence $\varphi_k\in C_0^\infty(\mathbf{R}^d)$ such that
\[
\varphi_k \to g \quad \mbox{in} \,\, L^2(\mathbf{R}^d,\mu_\ve), \quad k \to \infty,
\]
\[
\nabla \varphi_k \to z \quad \mbox{in} \,\, L^2(\mathbf{R}^d,\mu_\ve)^d, \quad k \to \infty.
\]
In this case $z$ is called a gradient of $g$ and is denoted by $\nabla^{\mu_\ve} g$.
\end{definition}
Since in our case the measure $\mu_\ve$ is a weighted Lebesgue measure, we have $\nabla^{\mu_\ve}g =\nabla g$ and
the space $H^1(\mathbf{R}^d, \mu_\ve)$ is equivalent to the usual Sobolev space $H^1(\ve^{-1/4}G_\ve)$.

The spaces $L^2(\mathbf R, \mu_\ve)$ and $H^1(\mathbf R^d, \mu_\ast)$ are defined in a similar way, however the $\mu_\ast$-gradient is not unique and is defined up to a gradient of zero. In this case the subspace of vectors of the form $(0, \psi_2(z_1), \cdots, \psi_d(z_1))$, $\psi_j \in L^2(\mathbf R)$ is the subspace of gradients of zero $\Gamma_{\mu_\ast}(0)$ (see \cite[Ch. 2.10]{ChPiSh-07}). In other words, for $v \in H^1(\mathbf R^d, \mu_\ast)$, any $\mu_\ast$-gradient of $v$ has a form
\[
\nabla^{\mu_\ast} v(z) = (v'(z_1, 0), \psi_2(z_1), \cdots, \psi_d(z_1)), \quad \psi_j \in L^2(\mathbf R),
\]
where $v'(z_1, 0)$ is the derivative of the restriction of $v(z)$ to $\mathbf R$.

Convergence in variable spaces $L^2(\mathbf{R}^d, \mu_\ve)$ is defined as follows.
\begin{definition}
%\label{def-conv-var-spac}
A sequence $\{g^\ve(x)\} \subset L^2(\mathbf{R}^d, \mu_\ve)$ is said to converge weakly in $L^2(\mathbf{R}^d, \mu_\ve)$ to a function $g(x_1) \in L^2(\mathbf{R}^d, \mu_\ast)$, as $\ve \to 0$, if
\begin{enumerate}[(i)]
\item $\mu_\ve \rightharpoonup \mu_\ast$ weakly in $\rr^d$,
\item $\|g^\ve\|_{L^2(\mathbf{R}^d, \mu_\ve)} \le C$,
\item
for any $\varphi \in C_0^\infty(\mathbf{R}^d)$ the following limit relation holds:
\[
\lim \limits_{\ve \to 0} \int \limits_{\mathbf{R}^d} g^\ve(x) \, \varphi(x)\, d\mu_\ve(x) =
\int \limits_{\mathbf{R}^d} g(x_1)\, \varphi(x_1, 0)\, d\mu_\ast(x).
\]
\end{enumerate}
A sequence $\{g^\ve\}$ is said to converge strongly to $g(x_1)$ in $L^2(\mathbf{R}^d, \mu_\ve)$, as $\ve \to 0$, if it converges weakly and
\begin{align*}
\lim \limits_{\ve \to 0} \int \limits_{\mathbf{R}^d} g^\ve(x) \, \psi^\ve(x)\, d\mu_\ve(x)
=
\int \limits_{\mathbf{R}^d} g(x_1)\, \psi(x_1)\, d\mu_\ast(x),
\end{align*}
for any sequence $\{\psi^\ve(x)\}$ weakly converging to $\psi(x_1)$ in $L^2(\mathbf{R}^d, \mu_\ve)$.
\end{definition}
The property of weak compactness of a bounded sequence in a separable Hilbert space remains valid with respect to the convergence in variable spaces.

In the present context two-scale convergence is described as follows.
\begin{definition}
We say that $g^\ve\in L^2(\mathbf{R}^d, \mu_\ve)$ converges two-scale weakly, as $\ve \to 0$, in $L^2(\mathbf{R}^d, \mu_\ve)$ if there exists a function
${g}(x_1,y) \in L^2(\mathbf{R}^d \times \Box, \mu_\ast \times dy)$ such that
\begin{enumerate}[(i)]
\item $\mu_\ve \rightharpoonup \mu_\ast$ weakly in $\rr^d$,
\item $\|g^\ve\|_{L^2(\mathbf{R}^d, \mu_\ve)} \le C, \quad \ve>0$,
\item[$(ii)$]
the following limit relation holds:
\[
\lim \limits_{\ve \to 0} \int \limits_{\mathbf{R}^d} g^\ve(x) \, \varphi(x)\, \psi(\frac{x}{\ve^{3/4}}) d\mu_\ve(x) =
\frac{1}{|\Box|}\int \limits_{\mathbf{R}^d} \int \limits_{\Box} {g}(x_1,y)\, \varphi(x_1, 0)\, \psi(y) \, dy\, d\mu_\ast(x),
\]
for any $\varphi\in C_0^\infty(\mathbf{R}^d)$ and $\psi(y)\in C^\infty(\Box)$ periodic in $y_1$.
\end{enumerate}
We write $g^\ve \overset{2}{\rightharpoonup} g(x_1, y)$ if $g^\ve$ converges two-scale weakly to $g(x_1,y)$ in $L^2(\mathbf{R}^d, \mu_\ve)$.
\end{definition}
\begin{lemma}[Compactness]
\label{lm:compactness-1}
Suppose that $g^\ve$ satisfies the estimate
\[
\|g^\ve\|_{L^2(\mathbf{R}^d,\, \mu_\ve)} \le C.
\]
Then $g^\ve$, up to a subsequence, converges two-scale weakly in $L^2(\mathbf{R}^d, \mu_\ve)$ to some function ${g}(x_1,y) \in L^2(\mathbf{R}^d \times \Box, \mu_\ast \times dy)$.
\end{lemma}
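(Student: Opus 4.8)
The plan is to run the standard functional-analytic proof of two-scale compactness (cf.\ \cite{Zh-2000}, \cite{ChPiSh-07}), paying attention to the two places where the weighted singular measure $\mu_\ve$ needs care. For $\varphi \in C_0^\infty(\mathbf{R}^d)$ and $\psi \in C^\infty(\Box)$ periodic in $y_1$, set
\[
\Phi_\ve(\varphi,\psi) = \int \limits_{\mathbf{R}^d} g^\ve(x)\, \varphi(x)\, \psi\big(\tfrac{x}{\ve^{3/4}}\big)\, d\mu_\ve(x).
\]
Cauchy--Schwarz in $L^2(\mathbf{R}^d,\mu_\ve)$ and the hypothesis $\|g^\ve\|_{L^2(\mathbf{R}^d,\mu_\ve)}\le C$ give $|\Phi_\ve(\varphi,\psi)|^2 \le C^2 \int_{\mathbf{R}^d} |\varphi(x)|^2\, |\psi(x/\ve^{3/4})|^2\, d\mu_\ve(x)$, and more generally, for a finite linear combination $h_\ve(x)=\sum_i\varphi_i(x)\psi_i(x/\ve^{3/4})$, $\big|\int_{\mathbf R^d} g^\ve h_\ve\, d\mu_\ve\big|^2 \le C^2\int_{\mathbf R^d}|h_\ve|^2\,d\mu_\ve$.

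The first, and only genuinely nontrivial, step is a mean-value property for these oscillating \emph{periodic} functions tested against $\mu_\ve$: namely
\[
\int \limits_{\mathbf{R}^d} |h_\ve(x)|^2\, d\mu_\ve(x) \;\longrightarrow\; \frac{1}{|\Box|}\int \limits_{\mathbf{R}^d}\int \limits_\Box \Big|\sum_i \varphi_i(x_1,0)\psi_i(y)\Big|^2\,dy\,d\mu_\ast(x), \qquad \ve\to 0.
\]
Expanding the square, it suffices to treat terms $\int_{\mathbf R^d}\varphi(x)\,\psi(x/\ve^{3/4})\,d\mu_\ve(x)$ with $\varphi\in C_0(\mathbf R^d)$ and $\psi$ smooth, $1$-periodic in $y_1$. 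Unwinding the definition \eqref{def:measure} and substituting $x'=\ve^{3/4}w'$ turns such a term into $|Q|_{d-1}^{-1}\int_{\ve^{-1/4}I}\int_Q\varphi(x_1,\ve^{3/4}w')\,\psi(x_1/\ve^{3/4},w')\,dw'\,dx_1$; as $\ve\to0$ the argument $\ve^{3/4}w'$ tends to $0$ uniformly while, for fixed $w'$, the map $x_1\mapsto\psi(x_1/\ve^{3/4},w')$ is periodic with period $\ve^{3/4}\to 0$, so the classical averaging lemma (together with dominated convergence in $w'$) gives the limit $|Q|_{d-1}^{-1}\int_{\mathbf R}\varphi(x_1,0)\big(\int_\Box\psi\,dy\big)dx_1$, where $|Q|_{d-1}=|\Box|$. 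Consequently $\limsup_\ve\big|\int_{\mathbf R^d}g^\ve h_\ve\,d\mu_\ve\big|^2 \le \frac{C^2}{|\Box|}\int_{\mathbf{R}^d}\int_\Box|\sum_i\varphi_i(x_1,0)\psi_i(y)|^2\,dy\,d\mu_\ast(x)$.

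Next I would pass to a subsequence by a diagonal argument. Fix countable families $\{\varphi_k\}\subset C_0^\infty(\mathbf{R}^d)$ and $\{\psi_l\}\subset C^\infty(\Box)$ (periodic in $y_1$) dense in the sup-norms on compacta; for each $(k,l)$ the numbers $\Phi_\ve(\varphi_k,\psi_l)$ stay bounded (for small $\ve$, $\mu_\ve(\mop{supp}\varphi_k)$ is bounded), so along a subsequence $\ve'\to 0$ all of them converge. Since $|\Phi_\ve(\varphi,\psi)-\Phi_\ve(\tilde\varphi,\tilde\psi)|$ is controlled, uniformly in small $\ve$, by the $L^\infty$-distances of $(\varphi,\psi)$ to $(\tilde\varphi,\tilde\psi)$ on a fixed compact set, density forces $\Phi_{\ve'}(\varphi,\psi)$ to converge for every admissible pair, hence $\int g^{\ve'}h_{\ve'}\,d\mu_{\ve'}$ converges for every finite combination $h$ of products $\varphi\psi$; call the limit $\Phi(h)$. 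By the previous step, $|\Phi(h)|^2\le \frac{C^2}{|\Box|}\,\big\|\sum_i\varphi_i(x_1,0)\psi_i\big\|_{L^2(\mathbf{R}^d\times\Box,\,\mu_\ast\times dy)}^2$.

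Finally, since $\mu_\ast=dx_1\times\delta(x')$, the functions $\sum_i\varphi_i(x_1,0)\psi_i(y)$ make up a dense subspace of $L^2(\mathbf{R}^d\times\Box,\mu_\ast\times dy)\simeq L^2(\mathbf R\times\Box,dx_1\times dy)$, so the estimate just obtained shows that $\Phi$ descends to, and extends as, a bounded linear functional on this Hilbert space; the Riesz representation theorem then provides $g(x_1,y)\in L^2(\mathbf{R}^d\times\Box,\mu_\ast\times dy)$ with
\[
\Phi(h) = \frac{1}{|\Box|}\int \limits_{\mathbf{R}^d}\int \limits_\Box g(x_1,y)\,\Big(\sum_i\varphi_i(x_1,0)\psi_i(y)\Big)\,dy\,d\mu_\ast(x).
\]
Specializing $h=\varphi\,\psi$ yields exactly the limit relation in the definition of two-scale convergence; its two listed hypotheses hold ($\mu_\ve\rightharpoonup\mu_\ast$ by the computation preceding the lemma, inherited by the subsequence, and $\|g^\ve\|_{L^2(\mathbf{R}^d,\mu_\ve)}\le C$ by assumption), so $g^{\ve'}\overset{2}{\rightharpoonup}g$. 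The main obstacle is the mean-value step: it intertwines the dimension reduction carried by $\mu_\ve\rightharpoonup\mu_\ast$ with the averaging of a fast periodic oscillation, and one must check that the transverse rescaling $x'=\ve^{3/4}w'$ is compatible with both; everything else is the routine diagonal-subsequence plus Riesz-representation argument.
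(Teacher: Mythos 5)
Your proof is correct and is essentially the argument the paper relies on: the paper omits the proof and refers to \cite{Zh-2000} and \cite{ChPiSh-07}, and what you write is precisely that standard compactness argument (Cauchy--Schwarz plus the mean-value/averaging property for $\varphi(x)\psi(x/\ve^{3/4})$ against $\mu_\ve$, a diagonal subsequence over a countable dense family, and Riesz representation on $L^2(\mathbf{R}^d\times\Box,\mu_\ast\times dy)$), with the transverse substitution $x'=\ve^{3/4}w'$ correctly reconciling the dimension reduction with the periodic averaging.
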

\begin{definition}
A sequence $g^\ve$ is said to converge two-scale strongly to a function ${g}(x_1, y) \in L^2(\mathbf{R}^d \times \Box, \mu_\ast \times dy)$ if
\begin{enumerate}[(i)]
\item $\mu_\ve \rightharpoonup \mu_\ast$ weakly in $\rr^d$,
\item $g^\ve$ converges two-scale weakly to ${g}(x_1, y)$,
\item the following limit relation holds:
\[
\lim \limits_{\ve \to 0} \int \limits_{\mathbf{R}^d} (g^\ve(x))^2 d\mu_\ve(x) =
\frac{1}{|\Box|}\int \limits_{\mathbf{R}^d} \int \limits_{\Box} ({g}(x_1,y))^2\, dy\, d\mu_\ast(x).
\]
We write $g^\ve \overset{2}{\rightarrow} g(x_1, y)$ if $g^\ve$ converges two-scale strongly to the function $g(x_1,y)$ in $L^2(\mathbf{R}^d, \mu_\ve)$.
\end{enumerate}
\end{definition}
In addition to compactness, we will use the following result about the strong two-scale convergence of the characteristic functions.
\begin{lemma}
Let $\tilde \chi^\ve(z) = \chi(\ve^{1/4}z_1, z/\ve^{3/4})$ be the characteristic function of the rescaled perforated rod $\ve^{-1/4}\Omega_\ve$. For any $\varphi \in C_0(\rr^d)$, the function $\varphi(z)\tilde \chi^\ve(z)$
converges two-scale strongly to $\varphi(z)\chi(0, \zeta)$ in $L^2(\rr^d, \mu_\ve)$, as $\ve \to 0$, where $\chi(x_1, y)$ is the characteristic function of the perforated cell $Y(x_1)$, that is
\begin{align}
\chi(x_1, y) :=
\begin{cases}
1 & \text{if } F(x_1, y) > 0, \\
0 & \text{otherwise.}%\mbox{if } F(x_1, y) \le 0.
\end{cases}
\label{eq:chi}
\end{align}

\end{lemma}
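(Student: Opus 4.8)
The plan is to reduce the statement to a single mean-value identity and then to prove that identity by a cell-by-cell Riemann-sum computation.

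First, note that the limit function, written $\varphi(z)\chi(0,\zeta)$ in the statement, is to be read as $\varphi(x_1,0)\chi(0,y)$ on $\rr^d\times\Box$, since $\mu_\ast=dx_1\times\delta(x')$ is carried by $\{x'=0\}$. By the definition of two-scale strong convergence I would check: (i) $\mu_\ve\rightharpoonup\mu_\ast$ (already done above); (ii) the two-scale weak convergence $\varphi\tilde\chi^\ve\overset{2}{\rightharpoonup}\varphi(x_1,0)\chi(0,y)$; (iii) the norm convergence $\int_{\rr^d}(\varphi\tilde\chi^\ve)^2\,d\mu_\ve\to\frac{1}{|\Box|}\int_{\rr^d}\int_\Box(\varphi(x_1,0)\chi(0,y))^2\,dy\,d\mu_\ast$. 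Since $\tilde\chi^\ve$ and $\chi(0,\cdot)$ are characteristic functions, $(\tilde\chi^\ve)^2=\tilde\chi^\ve$ and $\chi(0,y)^2=\chi(0,y)$, so both (ii) and (iii) are special cases of the single claim that for every $\Phi\in C_0(\rr^d)$ and every $\psi\in C^\infty(\Box)$ that is $1$-periodic in $y_1$,
\begin{align*}
& \ii{\rr^d}\tilde\chi^\ve(z)\,\Phi(z)\,\psi\big(z/\ve^{3/4}\big)\,d\mu_\ve(z) \\
& \qquad \longrightarrow\ \frac{1}{|\Box|}\ii{\rr^d}\ii{\Box}\chi(0,y)\,\Phi(x_1,0)\,\psi(y)\,dy\,d\mu_\ast(z), \qquad \ve\to0.
\end{align*}
Here (ii) is the case $\Phi=\varphi\phi$ with $\phi\in C_0^\infty(\rr^d)$ arbitrary, and (iii) the case $\Phi=\varphi^2$, $\psi\equiv1$. (Existence of a two-scale limit along a subsequence is free from Lemma~\ref{lm:compactness-1}; the identity is what pins it down.)

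To prove the displayed claim I would insert the density of $\mu_\ve$ from \eqref{def:measure}, so that the left side becomes $\ve^{-\frac{3}{4}(d-1)}|Q|_{d-1}^{-1}\int_{\ve^{-1/4}\Omega_\ve}\Phi(z)\psi(z/\ve^{3/4})\,dz$, then cut $\ve^{-1/4}I$ into the $1/\ve$ consecutive subintervals of length $\ve^{3/4}$ on which the perforation is exactly periodic — only $O(\ve^{-3/4})$ of them meet $\mop{supp}\Phi$ — and on each such cell freeze the slow argument $\ve^{1/4}z_1$ of $\chi$ at the left endpoint $z_1^k$ and freeze $\Phi$ at $(z_1^k,0)$. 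After the change of variables $y=z/\ve^{3/4}$, which maps one cell onto one period $\Box$, cell $k$ contributes $\tfrac{\ve^{3/4}}{|Q|_{d-1}}\Phi(z_1^k,0)\int_{Y(\ve^{1/4}z_1^k)}\psi(y)\,dy$, and summing over $k$ gives a Riemann sum of mesh $\ve^{3/4}$. Since $|\ve^{1/4}z_1^k|\to0$ uniformly over the relevant cells and $x_1\mapsto\int_{Y(x_1)}\psi(y)\,dy$ is continuous, $\int_{Y(\ve^{1/4}z_1^k)}\psi\to\int_{Y(0)}\psi$ uniformly in $k$, so the Riemann sum converges to $|Q|_{d-1}^{-1}\big(\int_{Y(0)}\psi\,dy\big)\int_{\rr}\Phi(z_1,0)\,dz_1$, which equals the right side because $|\Box|=|Q|_{d-1}$, $\int_{Y(0)}\psi\,dy=\int_\Box\chi(0,y)\psi(y)\,dy$ and $\mu_\ast=dx_1\times\delta(x')$.

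The main obstacle will be controlling the error made when the slow argument of $\chi$ is frozen in a cell: there the two integrands disagree exactly on the preimage of $Y(\ve^{1/4}z_1)\triangle Y(\ve^{1/4}z_1^k)$, and I need this set to have Lebesgue measure $O(\ve)$ per period, so that after multiplication by the density $\ve^{-\frac{3}{4}(d-1)}$, the cell volume $\ve^{3d/4}|Q|_{d-1}$ and the number $O(\ve^{-3/4})$ of cells, its total contribution is $O(\ve)$. This is where the hypotheses on $F$ are used: $F\in C^{1,\alpha}$ gives $|F(x_1,y)-F(x_1',y)|\le C|x_1-x_1'|$, hence $Y(x_1)\triangle Y(x_1')\subset\{y:|F(x_1',y)|\le C|x_1-x_1'|\}$, and the non-degeneracy $\nabla_yF\neq0$ on $\{F=0\}$, uniform for $x_1$ in a neighbourhood of $0$ by compactness, forces that sub-level set to have measure $O(|x_1-x_1'|)$; on one cell $|x_1-x_1'|\le\ve^{1/4}\cdot\ve^{3/4}=\ve$. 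The freezing of $\Phi$ is harmless by uniform continuity, since the cells and the cross-section $\ve^{3/4}Q$ both shrink while the relevant $\mu_\ve$-mass stays bounded. Alternatively, one may avoid the cell bookkeeping by passing to the original variables, where $\psi(z/\ve^{3/4})=\psi(x/\ve)$, and invoking the mean-value property of Lemma~\ref{lm:MVT-0} for $\int_{\Omega_\ve}\psi(x/\ve)\,(\cdots)\,dx$, whose error term is of exactly the required order. I expect the symmetric-difference estimate to be the only genuinely non-routine step.
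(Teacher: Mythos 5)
Your proposal is correct and takes a genuinely different, more elementary route than the paper. The paper's own proof is a two-line appeal to Corollary~\ref{lm:MVT-1}: it applies the rescaled mean-value property once to $\varphi(z)b(z/\ve^{3/4})$ to get the two-scale weak limit, and once to $\varphi^2(z)$ (with $b\equiv1$) to get the norm convergence, then lets $\mu_\ve\rightharpoonup\mu_\ast$ finish. You instead first use idempotency of characteristic functions to collapse both requirements into a single limiting identity for pairs $(\Phi,\psi)$, $\Phi\in C_0(\rr^d)$, and then prove that identity by partitioning $\ve^{-1/4}I$ into $\ve^{3/4}$-cells, freezing the slow variables, and passing to a Riemann sum. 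The reduction is sound (taking $\Phi=\varphi\phi$ gives weak two-scale convergence and $\Phi=\varphi^2$, $\psi\equiv1$ gives the norm identity), the cell computation $\frac{\ve^{3/4}}{|Q|_{d-1}}\Phi(z_1^k,0)\int_{Y(\ve^{1/4}z_1^k)}\psi\,dy$ is exactly right, and your error bookkeeping ($O(\ve^{-3/4})$ cells, $O(\ve^{7/4})$ error each from the symmetric difference, plus $\omega_\Phi(\ve^{3/4})$ from freezing $\Phi$) closes cleanly. The geometric crux you isolate, $|Y(s)\triangle Y(t)|=O(|s-t|)$ from $\nabla_yF\ne0$ and compactness, is precisely the content of the paper's Lemma~\ref{lm:annulus}, so you rediscovered the same key ingredient. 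What your route buys is self-containedness: you avoid the full machinery behind Lemma~\ref{lm:MVT-0} (the vector potential $\Psi$ and the trace inequality of Lemma~\ref{lm:lipschitzlayer}), replacing them with uniform continuity of $\Phi$, which is available here because the test function is fixed. What it gives up is generality: your argument does not extend to $H^1$ functions in place of $\Phi$, while Corollary~\ref{lm:MVT-1} does and is needed in that strength for the a priori estimates on the eigenfunctions; so within the paper there is good reason to just cite the corollary once it has been established. You also correctly flag the corollary-based route as a viable shortcut.
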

\begin{proof}
The compact support of $\varphi$ makes the sequence bounded in $L^2(\rr^d, \mu_\ve)$.
The mean value property (Corollary~\ref{lm:MVT-1}) gives
\begin{align*}
& \lim_{\ve \to 0} \int\limits_{\rr^d} \varphi(z) b(\frac{z}{\ve^{3/4}}) \tilde \chi^\ve(z) \,d\mu_\ve = \int\limits_{\rr^d} \frac{1}{|\Box|}\int\limits_\square \varphi(z) b(\zeta)  \chi(0, \zeta) \,d\zeta \, dz_1 \times d\delta (z').
\end{align*}
and
\begin{align*}
& \lim_{\ve \to 0} \int\limits_{\rr^d} \varphi^2(z) \tilde \chi^\ve(z) \,d\mu_\ve
 = \int\limits_{\rr^d} \frac{1}{|\Box|}\int\limits_\square \varphi^2(z)  \chi(0, \zeta) \,d\zeta \, dz_1 \times d\delta(z'),
\end{align*}
which verifies weak and strong two-scale convergence.
\end{proof}

\subsection{Rescaled problem}
\label{sec:rescaling}
The estimate obtained in Lemma~\ref{lm:est-lambda^eps} suggests that one can rescale the original problem to make the eigenvalues bounded and then pass to the limit in the weak formulation of the problem.
The rescaling we choose is based on the special scaling of a test function used in the proof of Lemma~\ref{lm:est-lambda^eps}.
Namely, we subtract $\ve^{-1}\bar{c}(0)\, u^\ve$ from both sides of the equation in \eqref{eq:orig-prob}, perform a change of variables $z = x/\ve^{1/4}$ both in the equation and boundary conditions, and multiply the resulting equation by $\ve^{1/2}$.
In this way we obtain the rescaled problem
\begin{align}
\label{eq:rescaled-prob}
\left\{
\ba{l}
- \mop{div}(\tilde{a}^\ve(z) \nabla v^\ve) + \frac{\tilde{c}^\ve(z) - \bar c(0)}{\sqrt \ve}\, v^\ve = \nu^\ve\, v^\ve(z), \quad \hfill z \in \ve^{-1/4}\Omega_\ve,
\\[3mm]
\disp
\tilde{a}^\ve(z)\nabla v^\ve \cdot n_\ve = 0, \quad \hfill z \in \ve^{-1/4}(\Gamma_\ve \cup S_\ve),
\\[3mm]
v^\ve=0, \quad \hfill  z \in \ve^{-1/4}\Gamma_\ve^\pm.
\ea
\right.
\end{align}
%\be
%\label{eq:rescaled-prob}
%\left\{
%\ba{l}
%\disp
%- \mop{div}(\tilde{a}^\ve(z) \nabla v^\ve) + \frac{(\tilde{c}^\ve(z) - \bar c(0))}{\sqrt \ve}\, v^\ve = \nu^\ve\, v^\ve(z), \quad \hfill z \in \ve^{-1/4}\Omega_\ve,
%\\[3mm]
%\disp
%\tilde{a}^\ve(z)\nabla v^\ve \cdot n_\ve = 0, \quad \hfill z \in \ve^{-1/4}(\Gamma_\ve \cup S_\ve),
%\\[3mm]
%v^\ve=0, \quad \hfill  z \in \ve^{-1/4}\Gamma_\ve^\pm.
%\ea
%\right.
%\ee
We write
\begin{align}
\tilde{a}^\ve(z) & = a\Big(\ve^{1/4}\, z_1, \frac{z}{\ve^{3/4}}\Big), &
\tilde{c}^\ve(z) & = c\Big(\ve^{1/4}\, z_1, \frac{z}{\ve^{3/4}}\Big), \label{eq:rescaling}\\
v^\ve(z) & = u^\ve(\ve^{1/4}z), &
\nu^\ve & = \sqrt{\ve} \big(\lambda^\ve - \frac{\bar{c}(0)}{\ve}\big). \label{eq:rescaling1}
\end{align}
Note that, due to \eqref{eq:norm-cond-u^eps}, the eigenfunctions of the rescaled problem are normalized by
\be
\label{eq:norm-cond-v^eps}
\ii{\mathbf{R}^d} \tilde \chi^\ve v_i^{\ve}\,v_j^{\ve}\, d\mu_\ve = \delta_{ij},
\ee
where $\tilde \chi^\ve(z)= \chi(\ve^{1/4}z_1, z/\ve^{3/4})$ is the characteristic function of the rescaled perforated rod $\ve^{-1/4}\Omega_\ve$ defined by \eqref{eq:chi}.

Moreover, with the help of Lemma~\ref{lm:est-lambda^eps}, we deduce that the eigenvalues of the rescaled problem are bounded uniformly in $\ve$:
\be
\label{eq:est-nu^eps}
\forall j: \quad 0 < m \le \nu_j^\ve \le M_j.
\ee
To be able to pass to the limit in the weak formulation of \eqref{eq:rescaled-prob} we need a priori estimates for $v^\ve$.
Because we work in a thin domain, we expect dimension reduction, that is why we use a measure which is asymptotically singular with respect to the Lebesgue measure in $\mathbf R^d$.

%%%%%%%%%%%%%%%%%%%%%%%%%%%%%%%%%%%%%%%%%%%%%%%%%%%%%%%%%%%%%%%%%%%%%%%%%%%%%%%%%%%%%%%%%%%%%%%%%%%%%%%%%%%%%%%%%%%%%%%%%%%%%%%%%%%

\subsection{A priori estimates for eigenfunctions}
\label{sec:a-priori-estim}

In this section we will derive a priori estimates for an eigenfunction $v^\ve$ of the rescaled problem \eqref{eq:rescaled-prob}.

The weak formulation of \eqref{eq:rescaled-prob} takes the form
\be
\label{eq:weak-rescaled-prob}
\ba{l}
\disp
\int \limits_{\ve^{-\frac{1}{4}}\Omega_\ve} \tilde{a}^\ve \nabla v^\ve \cdot \nabla \phi\, dz +
\frac{1}{\sqrt{\ve}} \int \limits_{\ve^{-\frac{1}{4}}\Omega_\ve} (\tilde{c}^\ve - \bar{c}(0))\, v^\ve\, \phi\, dz
=
\nu^\ve\, \int \limits_{\ve^{-\frac{1}{4}}\Omega_\ve} v^\ve\, \phi\, dz,
\ea
\ee
for any $\phi(x_1, x') \in C_0^\infty\big(\mathbf{R}; C^\infty(\ve^{\frac{3}{4}}Q)\big)$.

Recalling the definition \eqref{def:measure} of the measure $\mu_\ve$, we can rewrite \eqref{eq:weak-rescaled-prob} as
\begin{align}
\label{eq:weak-rescaled-prob-meas}
\int \limits_{\mathbf{R}^d} \tilde{\chi}^\ve \tilde{a}^\ve \nabla v^\ve \cdot \nabla \phi\, d\mu_\ve +
\frac{1}{\sqrt{\ve}} \int \limits_{\mathbf{R}^d} \tilde{\chi}^\ve (\tilde{c}^\ve - \bar{c}(0))\, v^\ve\, \phi\, d\mu_\ve
& =
\nu^\ve\, \int \limits_{\mathbf{R}^d} \tilde{\chi}^\ve v^\ve\, \phi\, d\mu_\ve,
\end{align}
for any $\phi \in H_0^1(\mathbf{R}^d)$. As before, we assume that $v^\ve$ is extended to the nonperforated rod $\ve^{-1/4}G_\ve$ and, abusing slightly the notation, we keep the same name for the extended function (see Remark~\ref{rm:extension}).

Taking $v^\ve$ as a test function in the weak formulation and using the ellipticity of the matrix $a$ give
\begin{align}
\label{eq:aux-3}
\Lambda_0 \int \limits_{\mathbf{R}^d} |\tilde{\chi}^\ve \nabla v^\ve|^2 \, d\mu_\ve +
\frac{1}{\sqrt{\ve}} \int \limits_{\mathbf{R}^d} \tilde{\chi}^\ve (\tilde{c}^\ve - \bar{c}(0))\, (v^\ve)^2\, d\mu_\ve
& \le
\nu^\ve\, \int \limits_{\mathbf{R}^d} (\tilde{\chi}^\ve v^\ve)^2\, d\mu_\ve.
\end{align}
Since the integrand in the second term on the left in \eqref{eq:aux-3} can change sign, we cannot estimate this term directly.
Corollary~\ref{lm:MVT-1} yields
\begin{align*}
& \int \limits_{\mathbf{R}^d} \tilde{\chi}^\ve (\tilde{c}^\ve - \bar{c}(0))\, (v^\ve)^2\, d\mu_\ve
- \frac{1}{|\Box|} \int \limits_{\mathbf{R}^d} |Y(\ve^{1/4})| (\bar c(\ve^{1/4} z_1) - \bar c(0))\,(v^\ve)^2\, d\mu_\ve \\
& \quad = O(\ve\cdot \ve^{-1/4}\|v^\ve\|_{L^2(\mathbf{R}^d,\, \mu_\ve)} \|\nabla v^\ve\|_{L^2(\mathbf{R}^d,\, \mu_\ve)}).
\end{align*}
By (H3),
\begin{align*}
\bar c(\ve^{1/4} z_1) - \bar c(0) = \frac{1}{2} \bar c''(0) \sqrt \ve |z_1|^2 + o(|\ve^{1/4} z_1|^2).
\end{align*}
We cannot use the Taylor expansion here because we do not know if $v^\ve$ is localized, and thus we cannot obtain an estimate for the remainder.
Instead, we will use a quadratic equivalence which is a straightforward consequence of Taylor's theorem.
\begin{lemma}
\label{lm:equivav-funct}
Let $f, \, g \in C^k(\bar I)$, for some bounded open set $I$ in $\mathbf{R}$.
Suppose that $\xi \in I$ is a unique minimum point of both $f$ and $g$, and
$f(\xi) = g(\xi) = 0$.
Assume that $l$ is such that $1 < l < k$, and
$f^{(l)}(\xi)$ and $g^{(l)}(\xi)$ are the first
nonvanishing derivatives of the functions at $\xi$.
Then there exists a positive constant $C$ such that $C f \le g \le C^{-1}f$ on $\bar I$.
\end{lemma}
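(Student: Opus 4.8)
The plan is to reduce the statement to a purely local comparison near the shared minimum point $\xi$, where Taylor's theorem gives sharp leading-order control, and then to patch this local estimate to a global one using a compactness argument on the complement of a neighborhood of $\xi$. Without loss of generality take $\xi = 0$ and note that, since $0$ is a minimum of both $f$ and $g$ with $f(0)=g(0)=0$, all derivatives up to order $l-1$ vanish at $0$, and $l$ is necessarily even with $f^{(l)}(0) > 0$, $g^{(l)}(0)>0$ (an odd first nonvanishing derivative would force a sign change, contradicting that $0$ is a minimum).

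First I would establish the local estimate. By Taylor's theorem with integral (or Peano) remainder, for $x$ in a small closed interval $[-\rho,\rho] \subset \bar I$,
\[
f(x) = \frac{f^{(l)}(0)}{l!}\, x^l + R_f(x), \qquad g(x) = \frac{g^{(l)}(0)}{l!}\, x^l + R_g(x),
\]
with $R_f(x) = o(x^l)$ and $R_g(x) = o(x^l)$ as $x \to 0$; since $f, g \in C^k$ with $k > l$ one in fact has $|R_f(x)| \le C|x|^{l+1}$ and similarly for $g$ on $[-\rho,\rho]$. Hence for $\rho$ chosen small enough that $C\rho < \tfrac12 \min\{f^{(l)}(0),g^{(l)}(0)\}/l!$, both $f$ and $g$ are pinched between $\tfrac12 \tfrac{f^{(l)}(0)}{l!}|x|^l$-type bounds and $\tfrac32$-type bounds; comparing the two yields a constant $C_1 \ge 1$ with $C_1^{-1} f(x) \le g(x) \le C_1 f(x)$ for all $x \in [-\rho,\rho]$.

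Next I would handle the region away from the minimum. On the compact set $K = \bar I \setminus (-\rho,\rho)$ the function $f$ is continuous, strictly positive (as $0$ is the \emph{unique} minimum and $f(0)=0$), hence bounded below by some $m > 0$; likewise $g \ge m' > 0$ on $K$, and both are bounded above on $\bar I$ by some $M$. Therefore on $K$ we have $g/f \in [m'/M,\, M/m]$, so a constant $C_2 \ge 1$ gives $C_2^{-1} f \le g \le C_2 f$ on $K$. Taking $C = \max\{C_1, C_2\}$ yields $C^{-1} f \le g \le C f$ on all of $\bar I$, which is the claim (after relabeling $C \leftrightarrow C^{-1}$ to match the stated orientation).

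The only genuinely delicate point is the interface: one must make sure the constants from the two regimes are compatible, but since each inequality $C^{-1} f \le g \le C f$ is monotone in $C$, simply taking the larger constant works, so there is really no obstacle beyond bookkeeping. I would also remark that the hypothesis $l < k$ (rather than $l \le k$) is exactly what guarantees the remainder is $O(|x|^{l+1})$ and not merely $o(|x|^l)$; the weaker $o$-estimate would already suffice for the comparison, so the lemma in fact holds whenever $f^{(l)}(\xi), g^{(l)}(\xi)$ are the first nonvanishing derivatives and $f,g \in C^l(\bar I)$.
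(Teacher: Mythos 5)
Your proof is correct. The paper itself omits the proof of this lemma, remarking only that it is ``a straightforward consequence of Taylor's theorem''; your argument --- a local Taylor pinching near $\xi$ combined with a compactness/positivity bound on $\bar I$ minus a neighbourhood of $\xi$ --- is precisely the intended route and fills in that gap cleanly, including the useful observation that $C^l$ regularity would already suffice.
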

%%SUPPRESSED PROOF
%\begin{proof}
%Suppose that the first claimed inequality is not true.
%Then there exists a sequence $x_m \in \bar I$ such that $f(x_m) > mg(x_m)$.
%The boundedness of $f$ implies $f(x_m) \to 0$.
%The uniqueness of the minimum of $f$ gives $x_m \to \xi$.
%Since $g$ is nonnegative and $f(\xi) = 0$ we have $x_m \neq \xi$ by the strict inequality.
%By Taylor's theorem, using $l + 1 \le k$,
%\begin{align*}
%(x_m - \xi)^l f^{(l)}(\xi) + (x_m - \xi)^{l + 1}h(x_m)
%&  >  m((x_m - \xi)^l g^{(l)}(\xi) + (x_m - \xi)^{l + 1}\tilde h(x_m)),
%\end{align*}
%for some functions $h$, $\tilde h$ such that $h(x_m) \to 0$, $\tilde h(x_m) \to 0$.
%Since $\xi$ is a minimum point of $g$ and $g^{(l)}(\xi) \neq 0$, $l$ must be even and $g^{(l)}(\xi) > 0$.
%Since $x_m \neq \xi$, we can divide by the strictly positive $(x_m - \xi)^l g^{(l)}(\xi)$ to obtain
%\begin{align*}
%\frac{f^{(l)}(\xi) + (x_m - \xi)h(x_m)}{g^{(l)}(\xi)} > m + \frac{m(x_m - \xi)\tilde h(x_m)}{g^{(l)}(\xi)}.
%\end{align*}
%Passing to the limit in the extended real line gives
%$f^{(l)}(\xi)/g^{(l)}(\xi) = +\infty$, which cannot be true since $g^{(l)}(\xi) \neq 0$ and $f^{(l)}(\xi) \in \mathbf{R}$.
%Repeating the argument for $f$ and $g$ interchanged completes the proof.
%\end{proof}
%%END OF SUPPRESSED PROOF

By using Lemma~\ref{lm:equivav-funct} we substitute $\bar c(\ve^{1/4} z_1) - \bar c(0)$ with the equivalent quadratic function $\bar c''(0) |\ve^{1/4}z_1|^2$ in \eqref{eq:aux-3} to obtain
\begin{align*}
& \|\tilde \chi^\ve \nabla v^\ve\|_{L^2(\mathbf{R}^d,\, \mu_\ve)}^2 + \|z_1 v^\ve\|_{L^2(\mathbf{R}^d,\, \mu_\ve)}^2
\\
& + \ve^{3/4} \|v^\ve\|_{L^2(\mathbf{R}^d,\, \mu_\ve)} \|\nabla v^\ve\|_{L^2(\mathbf{R}^d,\, \mu_\ve)}
\le
C\|\tilde \chi^\ve v^\ve\|_{L^2(\mathbf{R}^d)}^2.
\end{align*}
Due to the normalization condition \eqref{eq:norm-cond-v^eps}, $\|v^\ve\|_{L^2(\mathbf{R}, \mu_\ve)} \le 1$, and we get
\be
\label{eq:apriori-est}
\|\tilde \chi^\ve \nabla v^\ve\|_{L^2(\mathbf{R}^d,\, \mu_\ve)} +
\|z_1 v^\ve\|_{L^2(\mathbf{R}^d,\, \mu_\ve)} \le C,
\ee
where $C$ is independent of $\ve$.

\begin{lemma}
\label{lm:compactness-2}
Suppose that $v^\ve \in H^1(\mathbf{R}^d, \mu_\ve)$ is such that
\begin{align*}
\int \limits_{\mathbf{R}^d} |\nabla v^\ve|^2\, d\mu_\ve
+\int \limits_{\mathbf{R}^d} |z_1 v^\ve|^2\, d\mu_\ve \le C,
\end{align*}
for some $C$ independent of $\ve$.
Then $v^\ve$ converges strongly along a subsequence in $L^2(\mathbf{R}^d, \mu_\ve)$, i.e. $v^\ve$ converges weakly in $L^2(\mathbf{R}^d, \mu_\ve)$ to some $v \in L^2(\mathbf{R}^d, \mu_\ast)$ and
\begin{align*}
\lim \limits_{\ve \to 0} \int \limits_{\mathbf{R}^d} |v^\ve|^2\, d\mu_\ve = \int\limits_{\mathbf{R}^d} |v|^2\, d\mu_\ast.
\end{align*}
\end{lemma}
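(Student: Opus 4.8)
The plan is to localize the problem to a fixed bounded piece of the rescaled rod, where a Rellich--Friedrichs type compactness handles the transversal dimension reduction, and then to remove the truncation using tightness in the longitudinal variable $z_1$ coming from the weight in the hypothesis.

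First I would note that the hypotheses already force $\|v^\ve\|_{L^2(\mathbf{R}^d,\mu_\ve)}\le C$: on $\{|z_1|\ge 1\}$ one has $\int_{|z_1|\ge 1}|v^\ve|^2\,d\mu_\ve\le\int_{\mathbf{R}^d}|z_1 v^\ve|^2\,d\mu_\ve\le C$, while on $\{|z_1|<1\}$ a one-dimensional Poincar\'e--Friedrichs inequality along the rod (comparing values on $\{|z_1|<1\}$ with those on $\{1<|z_1|<2\}$ using $\nabla v^\ve$) bounds the remaining mass by $C(\|\nabla v^\ve\|^2_{L^2(\mu_\ve)}+\|z_1 v^\ve\|^2_{L^2(\mu_\ve)})$. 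Thus $v^\ve$ is bounded in $L^2(\mathbf{R}^d,\mu_\ve)$, and the weak compactness of bounded sequences in variable spaces recalled above yields a subsequence (not relabelled) and a limit $v\in L^2(\mathbf{R}^d,\mu_\ast)$ with $v^\ve\rightharpoonup v$ weakly in $L^2(\mathbf{R}^d,\mu_\ve)$. Since lower semicontinuity gives $\liminf_\ve\int_{\mathbf{R}^d}|v^\ve|^2\,d\mu_\ve\ge\int_{\mathbf{R}^d}|v|^2\,d\mu_\ast$, the whole issue is to prove the matching upper bound, i.e.\ convergence of the norms.

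Next, fix $R>0$ and work on the truncated rod $\{|z_1|<R\}$. I would introduce the transversal average $\bar v^\ve(z_1)=\tfrac{1}{|\ve^{3/4}Q|}\int_{\ve^{3/4}Q}v^\ve(z_1,z')\,dz'$. Jensen's inequality with respect to the normalized transversal measure shows that $\bar v^\ve$ is bounded in $H^1((-R,R))$, so along a further subsequence $\bar v^\ve\to v$ strongly in $L^2((-R,R))$ by the classical one-dimensional Rellich theorem. On the other hand, the Poincar\'e inequality on the cross-section $\ve^{3/4}Q$, whose diameter is $O(\ve^{3/4})$, gives $\int_{\ve^{3/4}Q}|v^\ve(z_1,z')-\bar v^\ve(z_1)|^2\,dz'\le C\ve^{3/2}\int_{\ve^{3/4}Q}|\nabla_{z'}v^\ve(z_1,z')|^2\,dz'$; integrating in $z_1$ and using the normalization of $\mu_\ve$ yields $\|v^\ve-\bar v^\ve\|_{L^2(\{|z_1|<R\},\mu_\ve)}^2\le C\ve^{3/2}\|\nabla v^\ve\|_{L^2(\mathbf{R}^d,\mu_\ve)}^2\to 0$. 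Combining the two, $v^\ve\to v(z_1)$ strongly in $L^2(\{|z_1|<R\},\mu_\ve)$; in particular the limit is independent of $z'$, and this also identifies it with the weak limit $v$ of the previous step when tested against $\varphi\in C_0^\infty$.

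Finally I would undo the truncation. The bound $\int_{\mathbf{R}^d}|z_1 v^\ve|^2\,d\mu_\ve\le C$ makes the family tight in $z_1$, since $\int_{|z_1|>R}|v^\ve|^2\,d\mu_\ve\le C/R^2$ uniformly in $\ve$, and the same estimate passes to $v$. A diagonal argument over $R=1,2,3,\dots$ then gives a single subsequence along which $v^\ve\to v$ strongly in $L^2(\{|z_1|<R\},\mu_\ve)$ for every $R$; splitting $\int_{\mathbf{R}^d}|v^\ve|^2\,d\mu_\ve$ over $\{|z_1|<R\}$ and $\{|z_1|\ge R\}$ and letting first $\ve\to0$ and then $R\to\infty$ gives $\lim_\ve\int_{\mathbf{R}^d}|v^\ve|^2\,d\mu_\ve=\int_{\mathbf{R}^d}|v|^2\,d\mu_\ast$, with $v\in L^2(\mathbf{R}^d,\mu_\ast)$ (indeed $z_1 v\in L^2$, so $v\in H^1(\mathbf{R}^d,\mu_\ast)$). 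This is precisely the claimed strong convergence in $L^2(\mathbf{R}^d,\mu_\ve)$. The main obstacle is the transversal step: one must exploit that the cross-section shrinks like $\ve^{3/4}$, so the Poincar\'e constant there is negligible, while simultaneously keeping the noncompact longitudinal direction under control through the weight $z_1$; once the problem is reduced to a fixed bounded rod, the compactness is the standard one-dimensional Rellich theorem applied to the fibre averages.
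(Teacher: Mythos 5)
The paper does not give a proof of this lemma; it delegates to Lemma~4.4 of~\cite{ChPaPi-13}. Your argument is correct and self-contained, and it combines the expected ingredients: an $L^2(\mu_\ve)$ bound derived from the hypotheses, a transversal Poincar\'e estimate exploiting the $O(\ve^{3/4})$ cross-section so that the fiber oscillation $v^\ve - \bar v^\ve$ vanishes in $L^2(\mu_\ve)$ as $\ve\to0$, the one-dimensional Rellich theorem applied to the fiber averages $\bar v^\ve$ on a fixed truncation $\{|z_1|<R\}$, and tightness in $z_1$ from the weight $\|z_1 v^\ve\|_{L^2(\mu_\ve)}\le C$, followed by a diagonal extraction in $R$. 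Since the cited reference treats a bounded perforated domain rather than a thin cylinder, your proof supplies explicitly the dimension-reduction step (the transversal averaging), which the paper leaves implicit in the phrase ``following the lines of.'' The only slip is the parenthetical at the end: ``$z_1 v\in L^2$, so $v\in H^1(\mathbf{R}^d,\mu_\ast)$'' is a non-sequitur, since control of $z_1 v$ in $L^2$ says nothing about the derivative of $v$. The membership $v\in H^1(\mathbf{R}^d,\mu_\ast)$ does hold, but because the bound on $\nabla v^\ve$ passes to a $\mu_\ast$-gradient in the limit by lower semicontinuity. That remark is not used elsewhere and does not affect the conclusion.
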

Lemma~\ref{lm:compactness-2} can be proved following the lines of Lemma~4.4 in~\cite{ChPaPi-13}.

\begin{lemma}
\label{lm:compactness-v^eps}
Suppose that
\begin{align*}
\|\tilde \chi^\ve \nabla v^\ve\|_{L^2(\mathbf{R}^d,\, \mu_\ve)} +
\|z_1 v^\ve\|_{L^2(\mathbf{R}^d,\, \mu_\ve)} \le C,
\end{align*}
for some $C$ which is independent of $\ve$. Then there exists $v \in H^1(\mathbf R^d, \mu_\ast)$ such that, for a subsequence,
\begin{enumerate}[(i)]
\item
$\disp \chi(\ve^{1/4}z_1, \frac{z}{\ve^{3/4}})\, v^\ve(z) \overset{2}{\rightharpoonup} \chi(0, \zeta)\, v(z_1)$,
\item
$\disp \chi(\ve^{1/4}z_1, \frac{z}{\ve^{3/4}})\, \nabla v^\ve(z) \overset{2}{\rightharpoonup} \chi(0, \zeta)\, (\nabla^{\mu_\ast} v(z_1) + \nabla_\zeta v^1(z_1, \zeta))$, where\\
$\nabla^{\mu_\ast} v(z_1)$ is one of the gradients of $v$ with respect to the measure $\mu_\ast$, and $v^1(z_1, \zeta) \in L^2(\mathbf{R}; H^1(\Box))$ is $1$-periodic in $\zeta_1$.
\item
$v^\ve(z)$ converges strongly in $L^2(\mathbf{R}^d, \mu_\ve)$ to $v(z_1)$, as $\ve \to 0$, i.e.
\begin{align*}
\lim \limits_{\ve \to 0} \|v^\ve\|_{L^2(\mathbf{R}^d,\, \mu_\ve)} = \|v\|_{L^2(\mathbf{R},\, \mu_\ast)}.
\end{align*}
\end{enumerate}
\end{lemma}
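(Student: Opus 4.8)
The plan is to combine the abstract two-scale compactness machinery (Lemma~\ref{lm:compactness-1}), the strong two-scale convergence of the characteristic functions, and the strong $L^2(\mathbf{R}^d,\mu_\ve)$-compactness guaranteed by Lemma~\ref{lm:compactness-2}. First I would apply Lemma~\ref{lm:compactness-2} to $v^\ve$: since $\|z_1 v^\ve\|_{L^2(\mathbf{R}^d,\mu_\ve)}\le C$ and $\|\tilde\chi^\ve\nabla v^\ve\|_{L^2(\mathbf{R}^d,\mu_\ve)}\le C$ (noting $\nabla v^\ve$ on the extended rod is controlled by the extension operator of Remark~\ref{rm:extension}, so the hypothesis of Lemma~\ref{lm:compactness-2} holds for the extension), there is a subsequence and a limit $v\in L^2(\mathbf{R}^d,\mu_\ast)$ with $v^\ve\to v$ strongly in $L^2(\mathbf{R}^d,\mu_\ve)$; in particular $\|v^\ve\|_{L^2(\mathbf{R}^d,\mu_\ve)}\to\|v\|_{L^2(\mathbf{R},\mu_\ast)}$, which is exactly (iii). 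The weight $z_1^2$ in the bound forces $v$ and the limit to be genuinely a function of $z_1$ alone, consistent with $\mu_\ast=dz_1\times\delta(z')$.

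Next I would extract the two-scale limits. By Lemma~\ref{lm:compactness-1} applied to the bounded sequences $\tilde\chi^\ve v^\ve$ and $\tilde\chi^\ve\nabla v^\ve$ in $L^2(\mathbf{R}^d,\mu_\ve)$, along a further subsequence $\tilde\chi^\ve v^\ve\overset{2}{\rightharpoonup} V(z_1,\zeta)$ and $\tilde\chi^\ve\nabla v^\ve\overset{2}{\rightharpoonup} W(z_1,\zeta)$ for some $V\in L^2(\mathbf{R}^d\times\Box,\mu_\ast\times d\zeta)$ and $W\in L^2(\mathbf{R}^d\times\Box,\mu_\ast\times d\zeta)^d$, both supported where $\chi(0,\zeta)=1$. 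To identify $V$, I would test with $\varphi(z)\psi(z/\ve^{3/4})$ and pass to the limit using strong convergence of $v^\ve$ together with the strong two-scale convergence $\varphi\tilde\chi^\ve\overset{2}{\rightarrow}\varphi\chi(0,\zeta)$ (the preceding lemma): the product of a strongly and a weakly two-scale convergent sequence passes to the limit, giving $V(z_1,\zeta)=\chi(0,\zeta)v(z_1)$, which is (i). To identify $W$, I would use the standard two-scale argument for gradients: for any $\zeta$-periodic divergence-free (with respect to the relevant measure, i.e.\ solenoidal) vector field $\Psi(\zeta)$ with $\Psi\cdot n=0$ on the hole boundary, integrating by parts in the oscillating test function $\ve^{3/4}\varphi(z)\Psi(z/\ve^{3/4})$ kills the $\zeta$-gradient contributions and shows that $W$ differs from $\nabla^{\mu_\ast}v$ only by a gradient in $\zeta$; more precisely $W=\chi(0,\zeta)(\nabla^{\mu_\ast}v(z_1)+\nabla_\zeta v^1(z_1,\zeta))$ with $v^1\in L^2(\mathbf{R};H^1(\Box))$, $1$-periodic in $\zeta_1$, which is (ii). The nonuniqueness of $\nabla^{\mu_\ast}v$ (gradients of zero of the form $(0,\psi_2(z_1),\dots,\psi_d(z_1))$) is exactly what is absorbed into the statement ``one of the gradients of $v$ with respect to $\mu_\ast$.''

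The main obstacle I anticipate is the gradient identification in (ii): because the domain is both perforated and asymptotically thin, the relevant cell problem lives on the perforated cell $Y(0)$ with Neumann data on $\partial Y(0)$, and the measure $\mu_\ast$ is singular, so one cannot simply invoke the flat-torus two-scale lemma. One must carefully choose the class of admissible oscillating test vector fields (periodic, solenoidal in the appropriate sense, vanishing normal component on the hole), verify that testing against them annihilates exactly the $\mu_\ast$-gradients of zero, and invoke the structure theorem for $\Gamma_{\mu_\ast}(0)$ from \cite[Ch.~2.10]{ChPiSh-07} to conclude that the two-scale limit of $\tilde\chi^\ve\nabla v^\ve$ has the claimed split form. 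A secondary technical point is the coupling between the scaling $z/\ve^{3/4}$ in $\chi(\ve^{1/4}z_1,z/\ve^{3/4})$ and the slow freezing $\ve^{1/4}z_1\to 0$: one needs the Hölder continuity of $F$ (hence of $\chi$) in $x_1$ together with the mean-value property (Corollary~\ref{lm:MVT-1}) to replace $\chi(\ve^{1/4}z_1,\cdot)$ by $\chi(0,\cdot)$ in the limit without error, which is precisely the content of the preceding lemma and keeps this step routine once invoked. With (i) and (ii) in hand, (iii) is already delivered by Lemma~\ref{lm:compactness-2}, so the proof concludes.
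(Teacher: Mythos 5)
Your argument is essentially correct and follows the same overall strategy as the paper: use the weighted $L^2$ bound together with the gradient bound to invoke the strong-compactness result (Lemma~\ref{lm:compactness-2}), giving (iii); use two-scale compactness to extract $V$ and $W$; identify them with the help of the strong two-scale convergence of the characteristic functions (via Corollary~\ref{lm:MVT-1}). You also correctly flag the point that the extension operator is needed to pass from the bound on $\tilde\chi^\ve\nabla v^\ve$ to a bound on the extended $\nabla v^\ve$.

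The one place where the paper takes a slightly cleaner route than you propose is the identification of the gradient limit (ii). You suggest working directly with the truncated sequence $\tilde\chi^\ve\nabla v^\ve$ and identifying its two-scale limit by testing against periodic solenoidal fields with vanishing normal component on the hole boundary, then invoking the structure of $\Gamma_{\mu_\ast}(0)$. That can be made to work, but it drags in exactly the technical annoyances you anticipate (the right class of test fields on a perforated cell, the singular measure). The paper instead applies the abstract compactness result for bounded sequences in $H^1(\mathbf{R}^d,\mu_\ve)$ (Theorem~10.3 of \cite{ChPiSh-07}) to the \emph{un-truncated} extended sequences $v^\ve$ and $\nabla v^\ve$, obtaining $\nabla v^\ve\overset{2}{\rightharpoonup}\nabla^{\mu_\ast}v+\nabla_\zeta v^1$ on the full cell $\Box$, and only afterwards multiplies by $\tilde\chi^\ve$ using the strong two-scale convergence $\tilde\chi^\ve\overset{2}{\rightarrow}\chi(0,\zeta)$; since a product of a strongly and a weakly two-scale convergent sequence converges to the pointwise product, this immediately yields (ii) without any cell-problem argument at this stage. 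This reordering (first the abstract structure theorem on the un-perforated rod, then truncation) is what keeps the lemma's proof short; the perforation only enters when one later substitutes test functions in the weak formulation, in Lemma~\ref{lm:conv-v^eps}. Your version would ultimately reach the same conclusion, but at the cost of re-deriving part of the cited structure theorem in the harder perforated setting.
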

\begin{proof}
This theorem can be proved following the lines of classical proofs of compactness results for two-scale convergence (see for example \cite{Al-1992, ChPiSh-07}), and therefore we omit the details and just indicate the main ideas.

Since the extended function $v^\ve$ as well as its gradient $\nabla v^\ve$ are bounded in $L^2(\mathbf{R}^d, \mu_\ve)$, Theorem~10.3 in~\cite{ChPiSh-07} implies that, up to a subsequence, $v^\ve \overset{2}{\rightharpoonup} v(z_1, 0)$ and $\nabla v^\ve \overset{2}{\rightharpoonup} \nabla^{\mu_\ast} v(z_1, 0) + \nabla_\zeta v^1(z_1, \zeta)$, as $\ve \to 0$. Here $v(z_1, 0)$ is the restriction of the function $v(z)=v(z_1, z')$ on $\mathbf R$, $\nabla^{\mu_\ast} v(z_1, 0)$ is the restriction of the $\mu_\ast$-gradient of $v(z)$ on $\mathbf R$.

By the mean-value property (Corollary~\ref{lm:MVT-1}) we have a strong two-scale convergence in $L^2(\mathbf{R}^d, \mu_\ve)$ of the sequence of characteristic functions on each compact $K \subset \mathbf{R}^d$:
\begin{align*}
\chi(\ve^{1/4}z_1, \frac{z}{\ve^{3/4}}) \overset{2}{\rightarrow} \chi(0,\zeta), \,\, \ve \to 0.
\end{align*}
Thus, on each compact $K \subset \mathbf{R}^d$ the statements (i) and (ii) hold.

The a priori estimate~\eqref{eq:apriori-est} gives more than just boundedness in $H^1(\mathbf{R}^d, \mu_\ve)$ (which in general do not imply strong convergence in $L^2(\mathbf{R}^d, \mu_\ve)$).
The presence of a growing weight $|z_1|$ in the $L^2$-norm guarantees that
the function $v^\ve$ is localized,
and the strong convergence in $L^2(\mathbf{R}, \mu_\ve)$ takes place
by Lemma~\ref{lm:compactness-2}.

Because of the strong convergence in $L^2(\mathbf{R}^d, \mu_\ve)$ the two-scale convergence in (i)--(ii) takes place not only on compact sets, but in the whole $\mathbf{R}^d$.
\end{proof}

%%%%%%%%%%%%%%%%%%%%%%%%%%%%%%%%%%%%%%%%%%%%%%%%%%%%%%%%%%%%%%%%%%%%%%%%%%%%%%%%%%%%%%%%%%%%%%%%%%%%%%%%%%%%%%%%%%%%%%%%%%%%%%%%%%%

\subsection{Passage to the limit}
\label{sec:pass-to-the-limit}
The main result of this section is contained in the following lemma.

\begin{lemma}
\label{lm:conv-v^eps}
Let $(\nu_j^\ve, v_j^\ve)$ be the $j$th eigenpair of the rescaled spectral problem \eqref{eq:rescaled-prob}. Then, up to a subsequence,
\begin{enumerate}[(i)]
\item
$\nu_j^\ve \to \nu_{J(j)}$, as $\ve \to 0$;
\item
$v_j^\ve$ converges strongly in $L^2(\mathbf{R}^d, \mu_\ve)$ to $v_{J(j)}(z_1)$ and moreover,
\begin{align*}
\tilde \chi^\ve\, \tilde a^\ve\, \nabla v_j^\ve \overset{2}{\rightharpoonup} \chi(0, \zeta)\, \Big((a^\eff v'(z_1), 0,\cdots, 0) + \nabla N(\zeta)\, v_{J(j)}'(z_1)\Big),\quad \ve \to 0.
\end{align*}

The pair $(\nu_{J(j)}, v_{J(j)}(z_1))$ is an eigenpair of the effective spectral problem
\begin{align}\label{eq:eff-problem-bis}
- a^\eff\, v'' + \frac{1}{2} \bar{c}''(0)\, |z_1|^2 \, v & = \nu \, v, &
v & \in L^2(\rr).
\end{align}
The coefficient $a^\eff$ is given by
\begin{align*}
%\label{eq:a-eff-bis}
a^\eff = \frac{1}{|Y(0)|} \int \limits_{Y(0)} a_{1i}(0,\zeta)\, (\delta_{1i} + \partial_{\zeta_i}N_1(\zeta))\, d\zeta.
\end{align*}
The function $N_1(\zeta)$ solves the following cell problem:
\be
\label{eq:N-bis}
\left\{
\begin{array}{lcr}
\displaystyle
- \mop{div}(a(0,\zeta)\nabla N_1(\zeta)) = \partial_{\zeta_i} a_{i 1}(0, \zeta), \quad \hfill \zeta \in Y(0),
\\[1.5mm]
\displaystyle
a(0, \zeta)\nabla N_1(\zeta)\cdot n = -  a_{i 1}(0, \zeta)\, n_i, \quad \hfill \zeta \in \partial Y(0).
\end{array}
\right.
\ee
\end{enumerate}
\end{lemma}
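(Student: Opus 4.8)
The plan is to pass to the two-scale limit in the weak formulation \eqref{eq:weak-rescaled-prob-meas}, using the a priori estimate \eqref{eq:apriori-est} and the compactness result of Lemma~\ref{lm:compactness-v^eps}. First I would fix $j$ and extract a subsequence along which $\nu_j^\ve \to \nu_\ast$ for some $\nu_\ast$ (possible by \eqref{eq:est-nu^eps}), and along which the conclusions (i)--(iii) of Lemma~\ref{lm:compactness-v^eps} hold, so that $\tilde\chi^\ve v_j^\ve \overset{2}{\rightharpoonup} \chi(0,\zeta)v(z_1)$ and $\tilde\chi^\ve\nabla v_j^\ve \overset{2}{\rightharpoonup} \chi(0,\zeta)(\nabla^{\mu_\ast}v(z_1)+\nabla_\zeta v^1(z_1,\zeta))$, with $v\in H^1(\mathbf R^d,\mu_\ast)$ and $v^1\in L^2(\mathbf R;H^1(\Box))$, $1$-periodic in $\zeta_1$. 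The strong convergence in (iii) together with the normalization \eqref{eq:norm-cond-v^eps} and the strong two-scale convergence of $\tilde\chi^\ve$ (the lemma just before Section~\ref{sec:rescaling}) gives $\frac{1}{|\Box|}\int_{\mathbf R}|Y(0)|\,v(z_1)^2\,dz_1 = 1$, so $v\not\equiv 0$.

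Next I would identify the corrector equation. Take a test function of the form $\phi^\ve(z)=\varepsilon^{3/4}\varphi(z)\psi(z/\varepsilon^{3/4})$ with $\varphi\in C_0^\infty(\mathbf R^d)$ and $\psi\in C^\infty(\Box)$ periodic in $\zeta_1$, insert it into \eqref{eq:weak-rescaled-prob-meas}, and let $\varepsilon\to 0$. The zero-order and right-hand side terms vanish in the limit (they carry an extra $\varepsilon^{3/4}$), and the gradient term produces, using $\tilde a^\ve \to a(0,\zeta)$ uniformly on compacts and the mean-value property (Corollary~\ref{lm:MVT-1}),
\[
\frac{1}{|\Box|}\int_{\mathbf R}\int_{Y(0)} a(0,\zeta)\big(\nabla^{\mu_\ast}v(z_1)+\nabla_\zeta v^1(z_1,\zeta)\big)\cdot\nabla_\zeta\psi(\zeta)\,d\zeta\,\varphi(z_1,0)\,dz_1 = 0.
\]
Since the first component of $\nabla^{\mu_\ast}v$ is $v'(z_1)$ and the other components are gradients of zero that can be absorbed into $v^1$, this is exactly the weak form of the cell problem \eqref{eq:N-bis}, giving $v^1(z_1,\zeta)=v'(z_1)N_1(\zeta)$ up to an additive function of $z_1$. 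Then I would take a test function $\phi^\ve(z)=\varphi(z_1)\theta(z')$ depending only on $z_1$ (times a harmless cross-section cutoff), pass to the limit in \eqref{eq:weak-rescaled-prob-meas}: the gradient term yields $\frac{1}{|\Box|}\int_{\mathbf R}\int_{Y(0)}a_{1i}(0,\zeta)(v'(z_1)\delta_{1i}+v'(z_1)\partial_{\zeta_i}N_1(\zeta))\,d\zeta\,\varphi'(z_1)\,dz_1 = |Y(0)|\,a^\eff\int_{\mathbf R}v'\varphi'\,dz_1$; the zero-order term, via Lemma~\ref{lm:equivav-funct} and the mean-value property, yields $\frac{1}{|\Box|}\int_{\mathbf R}|Y(0)|\tfrac12\bar c''(0)|z_1|^2 v\varphi\,dz_1$ (here the estimate \eqref{eq:apriori-est} on $\|z_1 v^\ve\|$ is what licenses passing to the limit in this term); and the right-hand side converges to $\nu_\ast\frac{1}{|\Box|}\int_{\mathbf R}|Y(0)|v\varphi\,dz_1$. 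Dividing by $|Y(0)|/|\Box|$ gives the weak form of \eqref{eq:eff-problem-bis}, so $(\nu_\ast,v)$ is an eigenpair; writing $\nu_\ast=\nu_{J(j)}$ and $v=v_{J(j)}$ finishes (i) and (ii). Strict positivity of $a^\eff$ follows from testing the cell problem against $N_1$ and using ellipticity: $|Y(0)|a^\eff = \int_{Y(0)}a(0,\zeta)(e_1+\nabla N_1)\cdot(e_1+\nabla N_1)\,d\zeta \ge \Lambda_0\int_{Y(0)}|e_1+\nabla N_1|^2\,d\zeta>0$, the last inequality because $\partial_{\zeta_1}N_1$ has zero mean over the periodic cell so $e_1+\nabla N_1$ cannot vanish identically.

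Finally, the two-scale convergence of the flux: from $v^1=v'N_1$ we get $\tilde\chi^\ve\tilde a^\ve\nabla v_j^\ve \overset{2}{\rightharpoonup}\chi(0,\zeta)a(0,\zeta)(\nabla^{\mu_\ast}v_{J(j)}(z_1)+\nabla_\zeta N_1(\zeta)v_{J(j)}'(z_1))$; projecting onto the first component and using the definition of $a^\eff$ identifies the $z_1$-averaged first component as $a^\eff v_{J(j)}'(z_1)$, while the transversal components, after averaging against $\zeta$-periodic test functions, vanish by the cell equation — this is the stated form $\chi(0,\zeta)((a^\eff v',0,\dots,0)+\nabla N_1(\zeta)v_{J(j)}')$ (with the understanding that the product $\tilde a^\ve\nabla v^\ve$ is interpreted via its weak two-scale limit). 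I expect the main obstacle to be the zero-order term: because $(\tilde c^\ve-\bar c(0))/\sqrt\varepsilon$ is neither bounded nor sign-definite, one cannot simply two-scale converge it; the argument must first replace $\bar c(\varepsilon^{1/4}z_1)-\bar c(0)$ by the equivalent quadratic $\tfrac12\bar c''(0)\sqrt\varepsilon|z_1|^2$ via Lemma~\ref{lm:equivav-funct}, then use the mean-value property of Corollary~\ref{lm:MVT-1} to pass from $\tilde c^\ve$ to its cell average $\bar c$, controlling the error by $\varepsilon^{3/4}\|v^\ve\|\,\|\nabla v^\ve\|\to 0$, and only then invoke the uniform bound on $\|z_1 v^\ve\|_{L^2(\mathbf R^d,\mu_\ve)}$ from \eqref{eq:apriori-est} to take the limit in the resulting weighted integral. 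Handling the unbounded domain — ensuring no mass escapes to infinity — is precisely what the localization built into \eqref{eq:apriori-est} and Lemma~\ref{lm:compactness-2} provides.
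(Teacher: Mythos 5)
Your proposal is correct and follows the same global strategy as the paper: extract convergent subsequences via \eqref{eq:est-nu^eps} and Lemma~\ref{lm:compactness-v^eps}, identify the corrector $v^1$ by testing with $\varepsilon^{3/4}\varphi\psi(\cdot/\varepsilon^{3/4})$, then pass to the limit with slow test functions, using Corollary~\ref{lm:MVT-1} and Lemma~\ref{lm:equivav-funct} to control the singular zero-order term. Your treatment of that term — the quadratic equivalence replacing $\bar c(\varepsilon^{1/4}z_1)-\bar c(0)$ by $\tfrac12\bar c''(0)\sqrt\varepsilon|z_1|^2$, the error bound $\varepsilon^{3/4}\|v^\ve\|\,\|\nabla v^\ve\|$, and the role of the weighted bound $\|z_1v^\ve\|_{L^2(\mu_\ve)}\le C$ — matches the paper's argument exactly, and your strict-positivity argument for $a^\eff$ is the same computation.

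The one place where you genuinely diverge is in handling the non-uniqueness of the $\mu_\ast$-gradient. You absorb the transversal components $\psi_k(z_1)e_k$ ($k\neq 1$) of $\nabla^{\mu_\ast}v$ into the corrector $v^1$ at the outset (using that $\zeta_k$ is a legitimate periodic test function for $k\neq 1$), so that the two-scale gradient limit takes the simple form $(v',0,\dots,0)+\nabla_\zeta(v'N_1)$, and you then choose the cross-section cutoff $\theta$ flat at $z'=0$ so that $\nabla\phi(z_1,0)=(\varphi',0,\dots,0)$ and the transversal entries of $A^\eff$ never enter. The paper instead keeps the full vector $N=(N_1,\dots,N_d)$, derives the matrix problem \eqref{eq:eff-weak-full} with the full $A^\eff\nabla^{\mu_\ast}v$, and then argues in two steps: first, testing against functions with zero trace and nonzero $\mu_\ast$-gradient shows that $A^\eff\nabla^{\mu_\ast}v$ must be orthogonal to $\Gamma_{\mu_\ast}(0)$, and second, testing \eqref{eq:N-k-bis} against $y_m$ for $m\neq 1$ shows $A_{km}^\eff=0$, which forces the flux into the form $(A_{11}^\eff v',0,\dots,0)$. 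Your route is more economical and avoids the explicit orthogonality discussion, at the cost of having to observe that the absorption preserves periodicity and that the decomposition into $\nabla^{\mu_\ast}v+\nabla_\zeta v^1$ remains legitimate; the paper's route is more pedestrian but makes visible the structural fact that the effective matrix has block form, which is conceptually clarifying. Both are correct; yours is a slicker way to the same place.
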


\begin{proof}
Since the eigenvalues $\nu_j^\ve$ of the rescaled problem are bounded (see \eqref{eq:est-nu^eps}), then, up to a subsequence, $\nu_j^\ve$ converges to some $\nu_{J(j)}$, as $\ve \to 0$.
The a priori estimate~\eqref{eq:apriori-est} together with Lemma~\ref{lm:compactness-v^eps} guarantee the convergence of the corresponding eigenfunction $v_j^\ve$ and its gradient, and it remains to prove that $(\nu_{J(j)}, v_{J(j)}(z_1))$ is an eigenpair of \eqref{eq:eff-problem}.

Let us take an eigenpair $(\nu^\ve, v^\ve)$ and show that it converges to some eigenpair $(\nu, v)$ of the effective problem.

To this end we pass to the limit, as $\ve \to 0$, in the weak formulation \eqref{eq:weak-rescaled-prob-meas}. Recall that
\be
\label{eq:weak-rescaled-prob-meas-bis}
\ba{l}
\disp
\int \limits_{\mathbf{R}^d} \tilde{\chi}^\ve \tilde{a}^\ve \nabla v^\ve \cdot \nabla \phi\, d\mu_\ve +
\frac{1}{\sqrt{\ve}} \int \limits_{\mathbf{R}^d} \tilde{\chi}^\ve (\tilde{c}^\ve - \bar{c}(0))\, v^\ve\, \phi\, d\mu_\ve
=
\nu^\ve\, \int \limits_{\mathbf{R}^d} \tilde{\chi}^\ve v^\ve\, \phi\, d\mu_\ve,
\ea
\ee
for any $\phi(z_1, \zeta) \in C_0^\infty(\mathbf{R}; C^\infty(\overline \Box))$.

We proceed in two steps.
First we choose an oscillating test function to determine the structure of $v^1(z_1,\zeta)$ in the two-scale limit in Lemma~\ref{lm:compactness-v^eps}.
Then we use a smooth test function of a slow argument and obtain the effective spectral problem.

Let us take
\begin{align*}
\Phi_\ve(z) & = \ve^{3/4}\, \varphi(z)\, \psi(\frac{z}{\ve^{3/4}}), \quad \varphi \in C_0^\infty(\mathbf{R}^d), \,\,\, \psi \in C^\infty(\Box),
\end{align*}
as a test function in \eqref{eq:weak-rescaled-prob-meas-bis}.
We consider all the terms separately.

%($\bullet$)

The gradient of $\Phi_\ve$ takes the form
\begin{align*}
\nabla \Phi_\ve(z) = \ve^{3/4}\, \psi(\frac{z}{\ve^{3/4}}) \nabla_z \varphi(z)  +
\varphi(z)\, \nabla_\zeta \psi(\zeta)\big|_{\zeta=z/\ve^{3/4}}.
\end{align*}
In the first term on the left hand side in \eqref{eq:weak-rescaled-prob-meas-bis}, with the help of the regularity properties of $a(z_1, \zeta)$, we can regard $\tilde a^\ve$ as a part of the test function.
Due to Lemma~\ref{lm:compactness-v^eps} we get
\begin{align*}
&\lim \limits_{\ve \to 0}\int \limits_{\mathbf{R}^d} \tilde{\chi}^\ve \tilde{a}^\ve \nabla v^\ve \cdot \nabla \Phi_\ve\, d\mu_\ve \\
& \quad = \frac{1}{|\Box|} \int \limits_{\mathbf{R}^d} \Big(\int \limits_{Y(0)} a(0,\zeta)\nabla_\zeta \psi(\zeta)\,d\zeta\Big) \cdot \nabla^{\mu_\ast} v(z_1, 0)\, \varphi(z_1, 0) d\mu_\ast \\
& \quad + \frac{1}{|\Box|} \int \limits_{\mathbf{R}^d} \Big(\int \limits_{Y(0)} a(0,\zeta)\nabla_\zeta \psi(\zeta)\cdot \nabla_\zeta v^1(z_1, \zeta)\,d\zeta\Big) \, \varphi(z_1, 0) d\mu_\ast.
\end{align*}
\item
Taking into account the regularity properties of $c, \varphi, \psi$ and the a priori estimate~\eqref{eq:apriori-est}, by Corollary~\ref{lm:MVT-1}, we get
\begin{align*}
& \Big|\frac{1}{\sqrt \ve}\, \ve^{3/4}\int \limits_{\mathbf{R}^d} \tilde{\chi}^\ve (\tilde{c}^\ve - \bar{c}(0))\, v^\ve\, \varphi(z)\, \psi(\frac{z}{\ve^{3/4}})\, d\mu_\ve \Big| \\
& \quad = \frac{\ve^{1/4}}{|\Box|}\Big|\int \limits_{\mathbf{R}^d} \Big(\int \limits_{Y(0)} (c(0, \zeta) - \bar{c}(0))\psi(\zeta)\,d\zeta\Big) v^\ve\, \varphi(z) d\mu_\ve  + O(\sqrt \ve)\Big|\le C\ve^{1/4},
\end{align*}
for some $C$ independent of $\ve$.

%($\bullet$)

Due to the boundedness of the eigenvalues of the rescaled problem (see estimate \eqref{eq:est-nu^eps}) and the normalization condition \eqref{eq:norm-cond-v^eps}, we have
\begin{align*}
\Big| \ve^{3/4} \nu^\ve\, \int \limits_{\mathbf{R}^d} \tilde{\chi}^\ve v^\ve\, \varphi(z)\, \psi(\frac{z}{\ve^{3/4}}) \, d\mu_\ve \Big| \le C\, \ve^{3/4}.
\end{align*}

Passing to the limit, as $\ve \to 0$, in \eqref{eq:weak-rescaled-prob-meas-bis} we obtain
\begin{align*}
& \int \limits_{\mathbf{R}^d} \Big(\int \limits_{Y(0)} a(0,\zeta)\nabla_\zeta v^1(z_1, \zeta)\cdot \nabla_\zeta\psi(\zeta)\, d\zeta\Big) \varphi(z_1, 0)\, d\mu_\ast \\
& \quad = - \int \limits_{\mathbf{R}^d} \Big(\int \limits_{Y(0)} a(0,\zeta)\nabla_\zeta \psi(\zeta)\,d\zeta\Big) \cdot \nabla^{\mu_\ast} v(z_1, 0)\, \varphi(z_1, 0) d\mu_\ast .
\end{align*}
Taking
\be
\label{eq:v^1}
v^1(z_1, \zeta)= N(\zeta) \cdot \nabla^{\mu_\ast} v(z_1, 0)
\ee
gives the following relation for the components of the vector-function $N(\zeta)$:
\begin{align*}
& \int \limits_{\mathbf{R}^d} \Big(\int \limits_{Y(0)} a(0,\zeta)\nabla N_k(\zeta)\cdot \nabla \psi(\zeta)\, d\zeta\Big) \varphi(z_1, 0)\, d \mu_\ast\\
& = - \int \limits_{\mathbf{R}^d} \Big(\int \limits_{Y(0)} a_{kj}(0,\zeta)\partial_{\zeta_j} \psi(\zeta)\, d\zeta\Big) \varphi(z_1, 0)\, d \mu_\ast,
\end{align*}
for any $\varphi \in C_0^\infty(\mathbf{R}^d)$, $\psi \in C^\infty(\Box)$. The last integral identity is a variational formulation associated to
\be
\label{eq:N-k}
\left\{
\begin{array}{lcr}
\displaystyle
- \mop{div}(a(0,y)\nabla N_k(y)) = \partial_{y_i} a_{i k}(0, y), \quad \hfill y \in Y(0),
\\[1.5mm]
\displaystyle
a(0, y)\nabla N_k(y)\cdot n = -  a_{i k}(0, y)\, n_i, \quad \hfill y \in \partial Y(0),\quad k = 1, 2, \cdots.
\end{array}
\right.
\ee
The existence and uniqueness of a periodic solution  $N_k(y) \in C^{1, \alpha}(\bar I; C^{1, \alpha}(\overline{Y(0)}))/\mathbf R$ to~\eqref{eq:N-k} follows from the Riesz representation theorem.

Using the representation \eqref{eq:v^1} together with the convergence (ii) in Lemma~\ref{lm:compactness-v^eps} we obtain
\begin{align*}
\tilde \chi^\ve \nabla v^\ve \overset{2}{\rightharpoonup} \chi(0, \zeta)\, (\nabla^{\mu_\ast} v(z_1, 0) + \nabla N(\zeta)\cdot \nabla^{\mu_\ast} v(z_1, 0)),\quad \ve \to 0.
\end{align*}
Now the structure of the function $v^1(z_1, \zeta)$ is known, and we can proceed by deriving the effective problem.
We use $\varphi(z) \in C_0^\infty(\mathbf R^d)$ as a test function in \eqref{eq:weak-rescaled-prob-meas-bis}.
Applying Corollary~\ref{lm:MVT-1} in the term containing $\tilde c^\ve - \bar c(0)$ we have
\begin{align*}
%\label{eq:aux-6}
& \int \limits_{\mathbf{R}^d} \tilde{\chi}^\ve(z) \tilde{a}^\ve(z) \nabla v^\ve(z) \cdot \nabla \varphi(z)\, d\mu_\ve \notag\\
& \qquad + \frac{1}{\sqrt{\ve}|\Box|} \int \limits_{\mathbf{R}^d} |Y(\ve^{1/4})| (\bar{c}(\ve^{1/4} z_1) - \bar{c}(0))\, v^\ve(z)\, \varphi(z)\, d\mu_\ve
+ O(\ve^{1/4}) \notag\\
& \quad = \nu^\ve\, \int \limits_{\mathbf{R}^d} \tilde{\chi}^\ve(z) v^\ve(z)\, \varphi(z)\, d\mu_\ve.
\end{align*}
The measure of $Y(\ve^{1/4}z_1)$, as a function of $z_1$, is a smooth function due to the properties of $F(x_1, y)$, defining the perforation.
Taylor expansions for $|Y(\ve^{1/4}z_1)|$ and $\bar{c}(\ve^{1/4} z_1) - \bar{c}(0)$ combined with the compactness result (i) in Lemma~\ref{lm:compactness-v^eps} give
\begin{align*}
\lim \limits_{\ve \to 0} \frac{1}{\sqrt{\ve}} \int \limits_{\mathbf{R}^d} \tilde \chi^\ve (\tilde{c}^\ve(z) - \bar{c}(0))\, v^\ve(z)\, \varphi(z)\, d\mu_\ve
& = \frac{|Y(0)|}{|\Box|}\, \frac{\bar c''(0)}{2}\, \int \limits_{\mathbf R^d}|z_1|^2 v(z_1, 0)\, \varphi(z_1, 0)\, d\mu_\ast.
\end{align*}
Here we use that $\varphi$ has a compact support, that is why the error term coming from the Taylor expansion vanishes.

Now we can pass to the limit in the integral identity \eqref{eq:weak-rescaled-prob-meas-bis} with $\varphi(z) \in C_0^\infty(\mathbf R^d)$ to obtain a problem for the pair $(v, \nabla^{\mu_\ast} v)$:
\begin{align*}
& \int \limits_{\mathbf R^d} \Big( \frac{1}{|Y(0)|} \int \limits_{Y(0)} a(0, \zeta)(I + \nabla N(\zeta))\, d\zeta\Big) \nabla^{\mu_\ast} v(z_1, 0)\cdot \nabla \varphi(z_1, 0)\,d\mu_\ast
 \notag\\
& \quad + \frac{\bar c''(0)}{2}\int \limits_{\mathbf R^d} |z_1|^2 v(z_1, 0)\varphi(z_1, 0)\, d\mu_\ast
 = \nu \int \limits_{\mathbf R^d} v(z_1, 0)\, \varphi(z_1, 0)\, d\mu_\ast.
\end{align*}
Here $\nabla N =\{\partial_{\zeta_i}N_j(\zeta)\}_{ij=1}^d$, and $I = \{\delta_{ij}\}_{ij=1}^d$ is the unit matrix.
Denote
\begin{align*}
A_{ij}^\eff = \frac{1}{|Y(0)|} \int \limits_{Y(0)} a_{ik}(0, \zeta)(\delta_{kj} + \partial_{\zeta_k}N_j(\zeta))\, d\zeta.
\end{align*}
In this way the limit problem in the weak form reads
\begin{align}
& \int \limits_{\mathbf R^d} A^\eff \nabla^{\mu_\ast} v(z_1, 0)\cdot \nabla \varphi(z_1, 0)\,d\mu_\ast
 + \frac{\bar c''(0)}{2}\int \limits_{\mathbf R^d} |z_1|^2 v(z_1, 0)\varphi(z_1, 0)\, d \mu_\ast \notag
 \\
& \quad = \nu \int \limits_{\mathbf R^d} v(z_1, 0)\, \varphi(z_1, 0)\, d\mu_\ast.
\label{eq:eff-weak-full}
\end{align}
As we know, the $\mu_\ast$-gradient is not unique, and one can see that the choice of a $A^\eff \nabla^{\mu_\ast} v(z_1, 0)$ is uniquely determined by the condition of orthogonality of the vector $A^\eff \nabla^{\mu_\ast} v$ to the subspace $\Gamma_{\mu_\ast}(0)$ of the gradients of zero. This can be shown by taking in \eqref{eq:eff-weak-full} any test function with zero trace $\varphi(z_1, 0, \cdots 0)=0$ and non-zero $\mu_\ast$-gradient, for example $\varphi(z)= \sum_{j\neq 1} z_j \psi_j(z_1)$ with arbitrary $\psi_j \in C_0^\infty(\mathbf R)\setminus \{0\}$. By the density of smooth functions, the subspace of vectors in the form $(0, \psi_2(z_1), \cdots, \psi_d(z_1))$, $\psi_j \in L^2(\mathbf R)$ is $\Gamma_{\mu_\ast}(0)$, and the condition of orthogonality to the gradients of zero gives that
$$
A^\eff \nabla^{\mu_\ast} v = (A_{1j}^\eff \partial_{z_j}^{\mu_\ast} v(z_1, 0), 0, \cdots, 0).
$$
If we define a solution of \eqref{eq:eff-weak-full} as a function $v(z) \in H^1(\mathbf R^d, \mu_\ast)$ satisfying the integral identity, then this solution is unique. A solution $(v, A^\eff \nabla^{\mu_\ast}v)$, as a pair,  is also unique due to the orthogonality to $\Gamma_{\mu_\ast}$. If one, however, defines a solution of \eqref{eq:eff-weak-full} as a pair $(v, \nabla^{\mu_\ast}v)$, then a solution is not unique. This has to do with the fact that the matrix $A^\eff$ is not positive definite, and the uniqueness of the flux does not imply the uniqueness of the gradient.

Next step is to prove that $A_{1j}^\eff = 0$ for all $j \neq 1$. To this end we rewrite the problem for $N_k$ in the following form:
\be
\label{eq:N-k-bis}
\left\{
\begin{array}{lcr}
\displaystyle
- \mop{div}(a(0,y)\nabla (N_k(y) + y_k) = 0, \quad \hfill y \in Y(0),
\\[1.5mm]
\displaystyle
a(0, y)\nabla (N_k(y) + y_k)\cdot n = 0, \quad \hfill y \in \partial Y(0),\quad k = 1, 2, \cdots.
\end{array}
\right.
\ee
Let us multiply the equation in \eqref{eq:N-k-bis} by $y_m$, $m \neq 1$, and integrate by parts over $Y(0)$. Note that for $m \neq 1$, the function $y_m$ is periodic in $y_1$ and can be used as a test function. This gives
\begin{align*}
\int \limits_{Y(0)} a(0, \zeta)\nabla (\zeta_k + N_k(\zeta)) \cdot \nabla \zeta_m \, d\zeta = 0,
\end{align*}
and since $\partial_{\zeta_j} \zeta_m = \delta_{jm}$, $A_{km}^\eff = 0$ for any $k=1, \cdots, d$ and $m \neq 1$.
Thus
\begin{align*}
A^\eff \nabla^{\mu_\ast} v = (A_{11}^\eff v'(z_1, 0), 0, \cdots, 0),
\end{align*}
and \eqref{eq:eff-weak-full} takes the form
\begin{align*}
& \int \limits_{\mathbf R} A_{11}^\eff v'(z_1, 0) \varphi'(z_1, 0)\,dz_1
 + \frac{\bar c''(0)}{2}\int \limits_{\mathbf R} |z_1|^2 v(z_1, 0)\varphi(z_1, 0)\, d z_1 \notag
 \\
& \quad = \nu \int \limits_{\mathbf R} v(z_1, 0)\, \varphi(z_1, 0)\, dz_1.
\end{align*}
Denoting $a^\eff = A_{11}^\eff$, $v(z_1)= v(z_1, 0)$, we see that the last integral identity is the weak formulation of \eqref{eq:eff-problem-bis}.

Using $N_i$ as a test function in \eqref{eq:N-k-bis} gives
\[
A_{ik}^\eff = \frac{1}{|Y(0)|} \int \limits_{Y(0)} a(0, \zeta)\nabla (\zeta_i +  N_i(\zeta))\cdot \nabla (\zeta_k + N_k(\zeta))\, d\zeta,
\]
which shows that $A^\eff$ is symmetric and positive semidefinite due to the corresponding properties of $a(x_1, y)$. If $e_1 = (1, 0,\cdots, 0)$,
\[
a^\eff = A_{11}^\eff = A^\eff e_1 \cdot e_1 \ge \frac{\Lambda_0}{|Y(0)|} \int \limits_{Y(0)} |\nabla (\zeta_1 +  N_1(\zeta))|^2\, d\zeta.
\]
Assuming that $\partial_{\zeta_i} (\zeta_1 + N_1(\zeta)) = 0$ for all $i$, leads to the contradiction since $N_1$ is periodic in $\zeta_1$. Thus, the effective coefficient $a^\eff$ is strictly positive. By standard arguments one can show that the spectrum of the limit problem \eqref{eq:eff-problem-bis} is real, discrete, all the eigenvalues are simple (see also Remark~\ref{rm:exact-sol}).

Due to the normalization condition \eqref{eq:norm-cond-v^eps} and the strong convergence in $L^2(\mathbf R^d, \mu_\ve)$,
\[
\lim \limits_{\ve \to 0} \frac{1}{\ve^{3(d-1)/4}} \int \limits_{\ve^{-1/4}G_\ve} \tilde \chi^\ve |v^\ve(z) - v(z_1)|^2\, dz = 0,
\]
the limit function $v(z_1)$ is not zero. Thus, $(\nu, v)$ is an eigenpair of the effective spectral problem \eqref{eq:eff-problem-bis}.

In this way, for a subsequence, any eigenvalue $\nu_j^\ve$ of \eqref{eq:rescaled-prob} converges to some eigenvalue $\nu_{J(j)}$ of the effective problem \eqref{eq:eff-problem-bis}, and the convergence of the corresponding eigenfunctions $v_j^\ve$ takes place. Lemma~\ref{lm:conv-v^eps} is proved.
\end{proof}

\begin{remark}
\label{rm:exact-sol}
The eigenpairs $(\nu_j, v_j)$ of the Sturm-Liouville problem
\begin{align*}
- a^\eff\, v'' + \frac{1}{2}\bar{c}''(0) \, z_1^2 \, v = \nu \, v, &&
v & \in L^2(\mathbf R),
\end{align*}
are
\begin{align*}
 \nu_j & = (2j - 1)\sqrt{\frac{a^\eff \bar c''(0)}{2}}, &
v_j & = H_j(\theta^{1/4} z_1) e^{- \sqrt{\theta} z_1^2/2 }, &
j & = 1,2, \ldots,
\end{align*}
where $\theta =  \frac{\bar c''(0)}{2a^\eff}$ and $H_j(x) = e^{x^2} \frac{{\rm d}^{(j-1)}}{{\rm d} x^{(j-1)}}e^{-x^2}$ the Hermite polynomials.
\end{remark}

%%%%%%%%%%%%%%%%%%%%%%%%%%%%%%%%%%%%%%%%%%%%%%%%%%%%%%%%%%%%%%%%%%%%%%%%%%%%%%%%%%%%%%%%%%%%%%%%%%%%%%%%%%%%%%%%%%%%%%%%%%%%%%%%%%%%%%%

\subsection{Convergence of spectra}
\label{sec:conv-spectra}
The goal of this section is to show that, for all $j$, the $j$th eigenvalue of the rescaled problem \eqref{eq:rescaled-prob} converges to the $j$th eigenvalue of the homogenized (effective) problem \eqref{eq:eff-problem-bis}, and the convergence of the corresponding eigenfunctions takes place.
The eigenvalues of the one-dimensional Sturm-Luiville problem~\eqref{eq:eff-problem-bis} are simple.
We will prove that, for sufficiently small $\ve$, the eigenvalues of \eqref{eq:rescaled-prob} are also simple.

\begin{lemma}
\label{lm:simplicity}
For sufficiently small $\ve$, along a subsequence, the eigenvalues of problem~\eqref{eq:rescaled-prob} are simple.
\end{lemma}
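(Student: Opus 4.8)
The plan is to argue by contradiction, exploiting that the effective problem \eqref{eq:eff-problem-bis} has only simple eigenvalues (Remark~\ref{rm:exact-sol}) together with the \emph{strong} $L^2(\mathbf{R}^d,\mu_\ve)$--convergence of the rescaled eigenfunctions established in Lemma~\ref{lm:conv-v^eps}. First I would fix once and for all a subsequence: applying Lemma~\ref{lm:conv-v^eps} for each index and extracting diagonally in $j$, we may pass to a subsequence (not relabelled) along which, for every $j$, the eigenvalue $\nu_j^\ve$ converges to an eigenvalue $\mu_j$ of \eqref{eq:eff-problem-bis} and the normalized eigenfunction $v_j^\ve$ converges strongly in $L^2(\mathbf{R}^d,\mu_\ve)$ to an eigenfunction $v_j$ of \eqref{eq:eff-problem-bis} associated with $\mu_j$; the nonvanishing $v_j\not\equiv 0$ is part of the proof of Lemma~\ref{lm:conv-v^eps}.

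Suppose the conclusion fails along the fixed subsequence. Since eigenvalues are counted with multiplicity, there is then an index $j$ and a further subsequence along which $\nu_j^\ve = \nu_{j+1}^\ve$. For each such $\ve$ we choose an orthonormal basis $\{v_j^\ve, v_{j+1}^\ve\}$ of the (at least two-dimensional) eigenspace, so that, by \eqref{eq:norm-cond-v^eps},
\[
\int_{\mathbf{R}^d} \tilde\chi^\ve (v_j^\ve)^2\,d\mu_\ve = \int_{\mathbf{R}^d} \tilde\chi^\ve (v_{j+1}^\ve)^2\,d\mu_\ve = 1, \qquad \int_{\mathbf{R}^d}\tilde\chi^\ve v_j^\ve v_{j+1}^\ve\,d\mu_\ve = 0 .
\]
Because $\nu_j^\ve = \nu_{j+1}^\ve$, the two sequences share the common limit $\mu_j$, so Lemma~\ref{lm:conv-v^eps} (applied along this further subsequence) gives $v_j^\ve \to v_j$ and $v_{j+1}^\ve \to v_{j+1}$ strongly in $L^2(\mathbf{R}^d,\mu_\ve)$, where $v_j$ and $v_{j+1}$ are eigenfunctions of \eqref{eq:eff-problem-bis} for the \emph{same} eigenvalue $\mu_j$. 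Since every eigenvalue of \eqref{eq:eff-problem-bis} is simple, the corresponding eigenspace is one-dimensional, hence $v_{j+1} = \alpha\, v_j$ with $\alpha\neq 0$.

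It then remains to pass to the limit in the orthogonality relation. Using the strong two-scale convergence $\tilde\chi^\ve \overset{2}{\rightarrow}\chi(0,\zeta)$ (the characteristic-function lemma of Section~\ref{sec:two-scale-convergence}) together with the strong $L^2(\mathbf{R}^d,\mu_\ve)$--convergence of $v_j^\ve$ and $v_{j+1}^\ve$, the product $\tilde\chi^\ve v_j^\ve$ converges two-scale strongly to $\chi(0,\zeta)\,v_j(z_1)$, and therefore
\[
0 = \lim_{\ve\to 0}\int_{\mathbf{R}^d}\tilde\chi^\ve v_j^\ve v_{j+1}^\ve\,d\mu_\ve = \frac{|Y(0)|}{|\Box|}\int_{\mathbf{R}} v_j(z_1)\,v_{j+1}(z_1)\,dz_1 = \alpha\,\frac{|Y(0)|}{|\Box|}\int_{\mathbf{R}} v_j(z_1)^2\,dz_1 .
\]
The right-hand side is nonzero since $\alpha\neq 0$ and $v_j\not\equiv 0$, a contradiction. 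Hence, along the fixed subsequence, $\nu_j^\ve < \nu_{j+1}^\ve$ for all small $\ve$, and running the same argument with $j$ replaced by $j-1$ yields $\nu_{j-1}^\ve < \nu_j^\ve$ as well, so $\nu_j^\ve$ is simple once $\ve < \ve_0(j)$.

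I expect the only genuinely delicate point to be the passage to the limit in $\int\tilde\chi^\ve v_j^\ve v_{j+1}^\ve\,d\mu_\ve$: it is a product of a two-scale strongly convergent factor ($\tilde\chi^\ve v_j^\ve$) and a two-scale weakly convergent one ($v_{j+1}^\ve$), which is admissible precisely because of the \emph{strong} $L^2(\mathbf{R}^d,\mu_\ve)$--convergence of the eigenfunctions, itself a consequence of the localizing weight $|z_1|$ in the a priori estimate \eqref{eq:apriori-est} via Lemma~\ref{lm:compactness-2}. Everything else is the standard mechanism by which a simple limiting eigenvalue forces eventual simplicity; as usual the threshold $\ve_0$ depends on $j$, which is the precise meaning of the statement.
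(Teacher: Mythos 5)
Your argument is correct and is essentially the same as the paper's: both rely on (i) Lemma~\ref{lm:conv-v^eps}, which gives strong $L^2(\mathbf R^d,\mu_\ve)$--convergence of the rescaled eigenfunctions to nontrivial eigenfunctions of \eqref{eq:eff-problem-bis}, (ii) the simplicity of the limiting spectrum, which forces a putative pair of degenerate limits to be scalar multiples of one another, and (iii) the preservation of the $\tilde\chi^\ve$-weighted inner product under this strong convergence. You pass to the limit in the orthogonality relation $\int\tilde\chi^\ve v_j^\ve v_{j+1}^\ve\,d\mu_\ve=0$ and get a nonzero limit; the paper passes to the limit in $\int\tilde\chi^\ve(v_1^\ve + C v_2^\ve)^2\,d\mu_\ve$, which equals a positive constant by orthonormality but whose limit is zero — the two computations are equivalent, both being instances of strong/weak pairing in the variable space $L^2(\mathbf R^d,\mu_\ve)$.
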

\begin{proof}
Suppose that some eigenvalue $\nu^\ve$ of \eqref{eq:rescaled-prob} has multiplicity two (or more), i.e. there exists two linearly independent eigenfunction $v_1^\ve, v_2^\ve$ corresponding to $\nu^\ve$.
Suppose also that $\nu^\ve$ converges, up to a subsequence, to $\nu_\ast$.
As was proved above, the eigenfunctions $v_1^\ve$, $v_2^\ve$ converge to the eigenfunctions $v_1, v_2$ corresponding to $\nu_\ast$. Since $\nu_\ast$ is simple, $v_1$ and $v_2$ should be linearly dependent, that is there exists $C\neq 0$ such that
\begin{align*}
v_1 + C\, v_2 = 0.
\end{align*}
Consider the linear combination $ v_1^\ve + C\, v_2^\ve$. It is an eigenfunction of \eqref{eq:rescaled-prob}.
Therefore,
\begin{align}
& \int \limits_{\mathbf{R}^d} \tilde{\chi}^\ve \tilde{a}^\ve \nabla (v_1^\ve + C\, v_2^\ve) \cdot \nabla (v_1^\ve + C\, v_2^\ve)\, d\mu_\ve + \frac{1}{\sqrt{\ve}} \int \limits_{\mathbf{R}^d} \tilde{\chi}^\ve (\tilde{c}^\ve - \bar{c}(0))\, (v_1^\ve + C\, v_2^\ve)^2\, d\mu_\ve \notag\\
& \quad = \nu^\ve\, \int \limits_{\mathbf{R}^d} \tilde{\chi}^\ve (v_1^\ve + C\, v_2^\ve)^2\, d\mu_\ve.\label{eq:aux-7}
\end{align}
Both eigenfunctions $v_1^\ve$ and $v_2^\ve$ are normalized by~\eqref{eq:norm-cond-v^eps}.
Thus,
\begin{align*}
\int \limits_{\mathbf{R}^d} \tilde{\chi}^\ve \tilde{a}^\ve \nabla v_i^\ve \cdot \nabla v_k^\ve\, d\mu_\ve +
\frac{1}{\sqrt{\ve}} \int \limits_{\mathbf{R}^d} \tilde{\chi}^\ve (\tilde{c}^\ve - \bar{c}(0))\, v_i^\ve \, v_k^\ve\, d\mu_\ve = \nu^\ve\, \delta_{ik}.
\end{align*}
The integral identity \eqref{eq:aux-7} takes the form
\be
\label{eq:aux-8}
2\, \nu^\ve = \nu^\ve\, \int \limits_{\mathbf{R}^d} \tilde{\chi}^\ve (v_1^\ve + C\, v_2^\ve)^2\, d\mu_\ve.
\ee
Due to the strong convergence in $L^2(\mathbf R^d, \mu_\ve)$,
\[
\|v_1^\ve + C\, v_2^\ve\|_{L^2(\mathbf R^d,\, \mu_\ve)} \to \|v_1 + C\, v_2\|_{L^2(\mathbf R^d,\, \mu_\ast)}= 0, \quad \ve \to 0.
\]
Passing to the limit on both sides of \eqref{eq:aux-8} yields
\[
2\, \nu_\ast = 0,
\]
which leads to a contradiction since $\nu_\ast \neq 0$.
Lemma~\ref{lm:simplicity} is proved.
\end{proof}

As a next step we prove that the order is preserved in the limit.
\begin{lemma}
\label{lm:conv-spectra}
For any $j$, the $j$th eigenvalue $\nu_j^\ve$ of problem \eqref{eq:rescaled-prob} converges to the $j$th eigenvalue $\nu_j$ of \eqref{eq:eff-problem-bis}, and the corresponding eigenfunctions converge in the sense of Lemma~\ref{lm:conv-v^eps}. In other words, $J(j)=j$ in Lemma~\ref{lm:conv-v^eps}.
\end{lemma}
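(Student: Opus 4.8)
The plan is to combine an upper bound for $\nu_j^\ve$, coming from the min--max principle and a recovery sequence, with the strong convergence and orthonormality of the eigenfunctions established in Lemma~\ref{lm:conv-v^eps}, so as to force the index map $J$ of Lemma~\ref{lm:conv-v^eps} to be the identity.

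\emph{Upper bound.} I would take the first $j$ eigenfunctions $v_1,\dots,v_j$ of \eqref{eq:eff-problem-bis}, normalized by \eqref{eq:norm-cond-v}, fix a cutoff $\eta\in C_0^\infty(\mathbf R)$ with $\eta\equiv1$ on $I$, and build the recovery sequence
\[
w_k^\ve(z)=\Big(v_k(z_1)+\ve^{3/4}\,N_1\big(\tfrac{z}{\ve^{3/4}}\big)\,v_k'(z_1)\Big)\,\eta(\ve^{1/4}z_1),
\]
with $N_1$ the corrector of \eqref{eq:N-bis}. Since each $v_k$ decays exponentially (Remark~\ref{rm:exact-sol}), $w_k^\ve$ is an admissible test function for \eqref{eq:rescaled-prob} (it vanishes on $\ve^{-1/4}\Gamma_\ve^\pm$) and differs from $v_k$ by a quantity exponentially small in $L^2(\mathbf R^d,\mu_\ve)$. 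One checks---using Corollary~\ref{lm:MVT-1} in the potential term, the Taylor expansions of $\bar c(\ve^{1/4}z_1)-\bar c(0)$ and $|Y(\ve^{1/4}z_1)|$ (now legitimate, the $v_k$ being localized), and the definition \eqref{eq:a-eff} of $a^\eff$ in the flux term---that the bilinear form of \eqref{eq:rescaled-prob} evaluated on $(w_i^\ve,w_k^\ve)$ converges to $\tfrac{|Y(0)|}{|\Box|}\,\nu_k\,\delta_{ik}$, while $\int_{\mathbf R^d}\tilde\chi^\ve w_i^\ve w_k^\ve\,d\mu_\ve\to\tfrac{|Y(0)|}{|\Box|}\,\delta_{ik}$. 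Hence for small $\ve$ the space $E_\ve=\mathrm{span}\{w_1^\ve,\dots,w_j^\ve\}$ is $j$-dimensional and the largest generalized eigenvalue of the pair (form, mass) on $E_\ve$ converges to $\nu_j$; the min--max principle then gives $\limsup_{\ve\to0}\nu_j^\ve\le\nu_j$.

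\emph{The map $J$ is the identity.} By Lemma~\ref{lm:conv-v^eps}, along a subsequence $\nu_j^\ve\to\nu_{J(j)}$ and $v_j^\ve$ converges strongly in $L^2(\mathbf R^d,\mu_\ve)$ to the nonzero eigenfunction $v_{J(j)}$ of \eqref{eq:eff-problem-bis} (up to a fixed scalar factor). Passing to the limit in the orthonormality relations \eqref{eq:norm-cond-v^eps} and using the strong convergence, $v_{J(1)},\dots,v_{J(k)}$ are pairwise orthogonal in $L^2(\mathbf R)$ and nonzero for every $k$, so $J$ is injective. Since $\nu_1^\ve\le\nu_2^\ve\le\cdots$, the limits obey $\nu_{J(j)}\le\nu_{J(j+1)}$; as the effective eigenvalues are strictly increasing, this together with injectivity forces $J(j)<J(j+1)$, hence $J(j)\ge j$ and $\nu_{J(j)}\ge\nu_j$. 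Combined with $\nu_{J(j)}=\lim_\ve\nu_j^\ve\le\nu_j$ from the upper bound, we get $\nu_{J(j)}=\nu_j$, and since the $\nu_k$ are distinct, $J(j)=j$. Because the limit is thereby uniquely determined and $\{\nu_j^\ve\}$ is bounded by \eqref{eq:est-nu^eps}, a standard subsequence argument yields $\nu_j^\ve\to\nu_j$ along the whole sequence, and by Lemma~\ref{lm:conv-v^eps} the corresponding eigenfunctions converge in the asserted sense.

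\emph{Main obstacle.} The technical heart is the upper bound: constructing a recovery sequence that is simultaneously compatible with the Dirichlet condition on $\ve^{-1/4}\Gamma_\ve^\pm$ and with the perforation, and verifying---via the mean-value property (Lemma~\ref{lm:MVT-0}, Corollary~\ref{lm:MVT-1}) and the now-licit Taylor expansion of $\bar c$---that the full bilinear form of \eqref{eq:rescaled-prob}, including the singular term $\ve^{-1/2}(\tilde c^\ve-\bar c(0))$, converges on $E_\ve$ to the effective form with coefficient $a^\eff$, together with the accompanying check that the $w_k^\ve$ remain linearly independent for small $\ve$.
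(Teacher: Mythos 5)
Your proposal is correct, and it rests on the same central ingredient as the paper's proof: the corrector-type recovery sequence $w_k^\ve=(v_k+\ve^{3/4}N(z/\ve^{3/4})v_k')\phi_\ve$, evaluated in the Rayleigh quotient via Corollary~\ref{lm:MVT-1} and the (now legitimate) Taylor expansion of $\bar c$. The organization, however, is genuinely different. The paper argues by contradiction: it assumes $\nu_1^\ve\to\nu_2$, computes the Rayleigh quotient of a \emph{single} recovery function $V_\ve$ to equal $\nu_1+O(\ve^{1/4})$, and reads off a contradiction with minimality of $\nu_1^\ve$; for higher $j$ it says only that ``the argument can be repeated.'' You instead derive a direct upper bound $\limsup_\ve\nu_j^\ve\le\nu_j$ by applying the min--max principle over the $j$-dimensional space $E_\ve=\mathrm{span}\{w_1^\ve,\dots,w_j^\ve\}$, and you supply a separate, clean device for the lower bound: the strong $L^2(\mu_\ve)$ convergence from Lemma~\ref{lm:conv-v^eps} carries the orthonormality \eqref{eq:norm-cond-v^eps} to the limit, so $J$ is injective, hence strictly increasing, hence $J(j)\ge j$ and $\nu_{J(j)}\ge\nu_j$; combined with the upper bound this pins $J(j)=j$. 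The paper's version is shorter at the cost of leaving the general-$j$ case to the reader; your version is more systematic and makes the $j$-dimensional min--max and the monotonicity/injectivity of $J$ explicit, which is exactly the content the paper glosses over. Both are correct; yours is the more self-contained account.

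One small point worth being explicit about: for the subspace argument you need $E_\ve$ to be $j$-dimensional \emph{and} the mass Gram matrix $\bigl(\int\tilde\chi^\ve w_i^\ve w_k^\ve\,d\mu_\ve\bigr)_{i,k\le j}$ to stay uniformly non-degenerate as $\ve\to0$. You assert that it converges to $\tfrac{|Y(0)|}{|\Box|}\delta_{ik}$, which does give both, but the mechanism should be spelled out: it follows from Corollary~\ref{lm:MVT-1} together with the fact that the $\ve^{3/4}N\,v_k'$ correction and the cutoff contribute only $O(\ve^{1/4})$ in $L^2(\mu_\ve)$, thanks to the exponential decay of the $v_k$. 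With that said, the argument closes.
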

\begin{proof}
By Lemma~\ref{lm:conv-v^eps}, all the eigenvalues of \eqref{eq:rescaled-prob} converge to some eigenvalues of \eqref{eq:eff-problem-bis}.
However, it is not proved yet that all the eigenvalues of the effective problem are limits of eigenvalues of \eqref{eq:rescaled-prob}.
We provide a proof by contradiction.
To fix the ideas, let us consider the first eigenvalue $\nu_1^\ve$ of \eqref{eq:rescaled-prob},
and assume that $\nu_1^\ve$ converges to the second eigenvalue $\nu_2$ of \eqref{eq:eff-problem-bis}.
The first eigenvalue $\nu_1^\ve$ is simple (in this case all other are simple too for small enough $\ve$, see Lemma~\ref{lm:simplicity})
and the corresponding eigenfunction $v_1^\ve$ converges to the eigenfunction $v_2$.

By the minmax principle,
\begin{align}
\label{eq:min-max-nu_1^eps}
\nu_1^\ve & = \inf \limits_{ \substack{v \in H^1(\mathbf R^d,\, \mu_\ve)\setminus\{0\}  \\ v|_{x_1 = \ve^{-1/4}\Gamma^\pm_\ve} = 0 }} \frac{\mathcal{F_\ve}(v)}{\|\tilde \chi^\ve v\|_{L^2(\mathbf R^d,\, \mu_\ve)}^{2}},\\
\mathcal{F_\ve}(v) & :=
\int\limits_{\mathbf R^d} \tilde \chi^\ve \tilde a^\ve \nabla v \cdot \nabla v \,d\mu_\ve
+ \frac{1}{\sqrt \ve} \int\limits_{\mathbf R^d} \tilde \chi^\ve (\tilde c^\ve - \bar c(0))\, v^2 \,d\mu_\ve,\notag
\end{align}
%\be
%\label{eq:min-max-nu_1^eps}
%\ba{l}
%\disp
%\nu_1^\ve =
%\inf \limits_{v \in H^1(\mathbf R^d,\, \mu_\ve)} \frac{\mathcal{F_\ve}}{\|\tilde \chi^\ve v\|_{L^2(\mathbf R^d,\, \mu_\ve)}^{2}},
%\\[2.5mm]
%\disp
%\mathcal{F_\ve} =
%\int\limits_{\mathbf R^d} \tilde \chi^\ve \tilde a^\ve \nabla v \cdot \nabla v \,d\mu_\ve
% + \frac{1}{\sqrt \ve} \int\limits_{\mathbf R^d} \tilde \chi^\ve (\tilde c^\ve - \bar c(0))\, v^2 \,d\mu_\ve,
% \ea
%\ee
where the infimum is taken over functions that vanish at the ends of the rescaled rod $\ve^{-1/4}\Gamma_\ve^\pm$.
The minimum is attained on the first eigenfunction $v_1^\ve$.
Since $\nu_1^\ve \to \nu_2$, as $\ve \to 0$, we can write $\nu_1^\ve = \nu_2 + o(1)$, as $\ve \to 0$.

We will construct a test function which gives a smaller value for the functional in \eqref{eq:min-max-nu_1^eps}. Let $v_1(z_1)$ be the first eigenfunction of \eqref{eq:eff-problem-bis} and $N$ the normalized solution of the auxiliary cell problem \eqref{eq:N-bis}.
Denote
\begin{align*}
V_\ve = \big(v_1(z_1) + \ve^{3/4} N\big(\frac{z}{\ve^{3/4}}\big)\, v_1'(z_1)\big)\, \phi_\ve(z_1),
\end{align*}
where $\phi_\ve(z_1)\in C^\infty(\mathbf R)$ is a cutoff which is equal to $1$ on $[-\frac{\ve^{-1/4}}{6}, \frac{\ve^{-1/4}}{6}]$, $\phi_\ve(z_1)=0$  on $\mathbf R \setminus [-\frac{\ve^{-1/4}}{3}, \frac{\ve^{-1/4}}{3}]$, and such that
\begin{align*}
0\le \phi_\ve\le 1, \quad |\phi_\ve'(z_1)| \le C\, \ve^{1/4}.
\end{align*}
We need to introduce this cutoff to make the test function $V_\ve$ satisfy the homogeneous Dirichlet boundary conditions on the ends of the rod.

We compute first the $L^2(\mathbf R^d, \mu_\ve)$-norm of $V_\ve$.
Taking into account the smoothness and exponential decay of $v_1(z_1)$, by Corollary~\ref{lm:MVT-1} we get
\begin{align*}
\|\tilde \chi^\ve V_\ve\|_{L^2(\mathbf R^d,\, \mu_\ve)}^2 =  \frac{|Y(0)|}{|\Box|} \int \limits_{\mathbf R^d} v_1(z_1)^2\, d\mu_\ve + O(\ve^{1/4}), \quad \ve \to 0.
\end{align*}
Then, substituting $V_\ve$ into the functional $\mathcal F_\ve$ in \eqref{eq:min-max-nu_1^eps}, up to the terms of higher order, we obtain
\begin{align*}
\mathcal F_\ve(V_\ve) & = \int \limits_{\mathbf R^d} \phi_\ve^2\, \tilde \chi^\ve\, \tilde a_{ij}^\ve \big( \delta_{1j} + \partial_{\zeta_j}N(\frac{z}{\ve^{3/4}})\big)\,
\big( \delta_{i1} + \partial_{\zeta_i}N(\frac{z}{\ve^{3/4}})\big) \, (v_1')^2\, d\mu_\ve
\\
& \quad +
\frac{1}{\sqrt \ve} \int \limits_{\mathbf R^d} \phi_\ve^2\, \tilde \chi^\ve (\tilde c^\ve - \bar c(0))\, v_1(z_1)^2\, d\mu_\ve
+ O(\ve^{1/4}).
\end{align*}
The function $v_1$, as an eigenfunction of the harmonic oscillator, decays exponentially (see Remark~\ref{rm:exact-sol}), thus the cutoff function does not contribute in the integral.
Moreover, by Corollary~\ref{lm:MVT-1}, using the properties of $\bar c$ and definition of the effective diffusion~\eqref{eq:a-eff},
\begin{align*}
\mathcal F_\ve(V_\ve) = \frac{|Y(0)|}{|\Box|}\Big(\int \limits_{\mathbf R} a^\eff \, (v_1'(z_1))^2\, d z_1
+
\frac{\bar c''(0)}{2} \int \limits_{\mathbf R} |z_1|^2\, v_1(z_1)^2\, dz_1 \Big)
+ O(\ve^{1/4}),
\end{align*}
as $\ve \to 0$.
Thus, by the minmax principle,
\begin{align*}
\frac{\mathcal{F_\ve}(V_\ve)}{\|\tilde \chi^\ve V_\ve\|_{L^2(\mathbf R^d,\, \mu_\ve)}^{2}}
& = \frac{\int_{\mathbf R} a^\eff \, (v_1'(z_1))^2\, d z_1 + \frac{\bar c''(0)}{2} \int_{\mathbf R} |z_1|^2\, v_1(z_1)^2\, dz_1 }{\|v_1\|_{L^2(\mathbf R)}^2} + O(\ve^{1/4}) \\
& = \nu_1 + O(\ve^{1/4}),
\end{align*}
as $\ve \to 0$.
We have shown that, on the one hand, the value of the infimum \eqref{eq:min-max-nu_1^eps} is close to $\nu_2$.
On the other hand, we have constructed a test function that gives a smaller value for the functional, $\nu_1 + O(\ve^{1/4})$.
Since $\nu_1^\ve$ is the smallest eigenvalue, we arrive at contradiction.

The argument can be repeated for any $j$. Lemma~\ref{lm:conv-spectra} is proved.
\end{proof}

We turn back to the original problem~\eqref{eq:orig-prob}.
By combining \eqref{eq:rescaling}--\eqref{eq:rescaling1} and using Lemmas~\ref{lm:conv-v^eps}, \ref{lm:simplicity}, \ref{lm:conv-spectra} we obtain the following theorem.
\begin{theorem}
\label{th:main-Th-full}
Suppose that (H1)--(H3) are satisfied.
Let $(\lambda_j^\ve, u_j^\ve)$ be the $j$th eigenpair to problem \eqref{eq:orig-prob}, and let $u_j^\ve$ be normalized by~\eqref{eq:norm-cond-u^eps}.
Then for any $j$, the following representation takes place:
\[
\lambda_j^\ve = \frac{\bar{c}(0)}{\ve} + \frac{\nu_j^\ve}{\sqrt{\ve}}, \quad u_j^\ve(x)= v_j^\ve\big(\frac{x}{\ve^{1/4}}\big),
\]
where $(\nu_j^\ve, v_j^\ve)$ defined by \eqref{eq:rescaled-prob} and normalized by \eqref{eq:norm-cond-v^eps} are such that
\begin{enumerate}[(i)]
\item For each $j$,
$\nu_j^\ve \to \nu_j$, as $\ve \to 0$.

\item
$v_j^\ve$ converges strongly in $L^2(\mathbf{R}^d, \mu_\ve)$ to $v_{j}(z_1)$ and
\[
\lim \limits_{\ve \to 0} \int \limits_{\mathbf R^d} \tilde \chi^\ve |v_j^\ve|^2\, d\mu_\ve
= \int \limits_{\mathbf R} |v_j|^2\, dz_1.
\]
Moreover, in $L^2(\rr^d, \mu_\ve)$,
\[
\tilde \chi^\ve \tilde a^\ve \nabla v_j^\ve \overset{2}{\rightharpoonup} \chi(0, \zeta)\, \Big((a^\eff v_j'(z_1), 0, \cdots, 0) + \nabla N(\zeta)\, \frac{{\mathrm d}v_{j}(z_1)}{{\mathrm d}z_1}\Big),\quad \ve \to 0,
\]
where $v_j(z_1)$ is the $j$th eigenfunction of the effective spectral problem \eqref{eq:eff-problem} corresponding to $\nu_j$.
\item[(iii)]
The fluxes converge weakly in $L^2(\mathbf R^d, \mu_\ve)$ (in the rescaled variables):
\[
a\big(\ve^{1/4}z_1, \frac{z}{\ve^{3/4}}\big) \nabla v_j^\ve \rightharpoonup (a^\eff v_j'(z_1), 0, \cdots, 0),\quad \ve \to 0.
\]
\item[(iv)]
For $\ve$ small enough, all the eigenvalues $\lambda_j^\ve$ are simple.
\end{enumerate}
\end{theorem}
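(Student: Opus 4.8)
The plan is to read off Theorem~\ref{th:main-Th-full} from the analysis of the rescaled problem \eqref{eq:rescaled-prob} by inverting the change of variables of Section~\ref{sec:rescaling}, so the argument is essentially a bookkeeping step assembling Lemmas~\ref{lm:conv-v^eps}, \ref{lm:simplicity} and \ref{lm:conv-spectra}. First I would record that $v(z)=u(\ve^{1/4}z)$ is a linear isomorphism of $H_0^1(\Omega_\ve,\Gamma_\ve^\pm)$ onto $H_0^1(\ve^{-1/4}\Omega_\ve,\ve^{-1/4}\Gamma_\ve^\pm)$ which, after subtracting $\ve^{-1}\bar c(0)$ from the spectral parameter and multiplying by $\sqrt\ve$, carries the Rayleigh quotient of \eqref{eq:orig-prob} onto that of \eqref{eq:rescaled-prob}; see \eqref{eq:rescaling}--\eqref{eq:rescaling1}. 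Since an isomorphism transforming one pair of quadratic forms into another preserves the ordered min--max values together with their multiplicities, the $j$th eigenpair of \eqref{eq:orig-prob} corresponds exactly to the $j$th eigenpair of \eqref{eq:rescaled-prob}. This yields the stated identities $\lambda_j^\ve=\bar c(0)/\ve+\nu_j^\ve/\sqrt\ve$ and $u_j^\ve(x)=v_j^\ve(x/\ve^{1/4})$ (exact, not merely asymptotic), and turns the normalization \eqref{eq:norm-cond-u^eps} into \eqref{eq:norm-cond-v^eps}; after this reduction it remains to prove (i)--(iv) for the pairs $(\nu_j^\ve,v_j^\ve)$.

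For (i) I would cite Lemma~\ref{lm:conv-spectra}, which already asserts $\nu_j^\ve\to\nu_j$ for the whole sequence and fixes the index matching $J(j)=j$ in Lemma~\ref{lm:conv-v^eps}. Substituting $J(j)=j$ back into Lemma~\ref{lm:conv-v^eps} supplies, along a subsequence, the strong convergence $v_j^\ve\to v_j(z_1)$ in $L^2(\mathbf{R}^d,\mu_\ve)$ --- hence $\int_{\mathbf{R}^d}\tilde\chi^\ve|v_j^\ve|^2\,d\mu_\ve\to\int_{\mathbf{R}}|v_j|^2\,dz_1$ --- and the two-scale convergence of the fluxes. Because $v_j$ is the unique normalized $j$th eigenfunction of the simple-spectrum operator \eqref{eq:eff-problem-bis} (Remark~\ref{rm:exact-sol}), once the signs of the $v_j^\ve$ are chosen consistently the subsequential limit is independent of the subsequence, so a routine argument (every subsequence admits a further subsequence with the same limit) promotes (ii) to the full sequence. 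Statement (iii) I would then extract from the two-scale flux convergence in (ii) by taking a non-oscillating test function (formally $\psi\equiv1$): averaging the two-scale limit over the periodicity cell and using the vanishing of the off-diagonal effective coefficients $A_{1m}^\eff$, $m\ne1$, established in the proof of Lemma~\ref{lm:conv-v^eps}, leaves a vector along $e_1$, which is the claimed $(a^\eff v_j'(z_1),0,\dots,0)$.

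For (iv) I would combine (i) with the simplicity of the effective spectrum: the ordered values $\nu_1^\ve<\nu_2^\ve\le\cdots$ converge to the pairwise distinct numbers $\nu_1<\nu_2<\cdots$, so if some $\nu_j^\ve$ had multiplicity $\ge2$ we would have $\nu_j^\ve=\nu_{j+1}^\ve$, which is impossible for small $\ve$ since the two sides tend to $\nu_j\ne\nu_{j+1}$; hence for $\ve$ small (depending on the range of indices considered) all $\nu_j^\ve$, and therefore all $\lambda_j^\ve$, are simple, which in particular makes Lemma~\ref{lm:simplicity} valid along the whole sequence. I do not anticipate a genuine obstacle: the theorem is a packaging of results already proved. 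The only points requiring care are the consistent choice of signs and subsequences needed to pass from subsequential to full-sequence convergence in (i)--(ii), and, in (iii), checking that the cell average of the two-scale flux limit together with the normalizing weight of $\mu_\ve$ reproduces exactly the constant $a^\eff$ fixed by \eqref{eq:a-eff}.
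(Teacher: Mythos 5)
Your proposal is correct and takes essentially the same route as the paper, which in fact disposes of this theorem in a single sentence ("By combining \eqref{eq:rescaling}--\eqref{eq:rescaling1} and using Lemmas~\ref{lm:conv-v^eps}, \ref{lm:simplicity}, \ref{lm:conv-spectra} we obtain the following theorem"); your proposal is simply a fleshed-out version of that bookkeeping. The one small stylistic divergence is in part (iv): you derive simplicity from the ordered convergence $\nu_j^\ve\to\nu_j$ (Lemma~\ref{lm:conv-spectra}) and the distinctness of the harmonic-oscillator eigenvalues, whereas the paper invokes the separately proved Lemma~\ref{lm:simplicity} (which is logically prior, being used inside the proof of Lemma~\ref{lm:conv-spectra}); both arguments are valid and lead to the same $j$-dependent threshold $\ve_0(j)$, so this is a harmless rearrangement rather than a different method.
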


%%%%%%%%%%%%%%%%%%%%%%%%%%%%%%%%%%%%%%%%%%%%%%%%%%%%%%%%%%%%%%%%%%%%%%%%%%%%%%%%%%%%%%%%%%%%%%%%%%%%%%%%%%%%%%%%%%%%%%%%%%%%%%%%%%%%%%%

\section{Integral estimate for rapidly oscillating functions}
\label{sec:MVT}

The purpose of this section is to give a proof of Corollary~\ref{lm:MVT-1}.

\begin{lemma}
\label{lm:MVT-0}
Let $v_\ve \in H^1_0(G_\ve, \Gamma_\ve)$ and $w(x_1, y) \in C^{1, \alpha}( \bar I; C^{\alpha}(\overline{\square}) )$.
Then as $\ve \to 0$,
\begin{align*}
\int \limits_{ \Omega_\ve } \!\! w\big(x_1, \frac{x}{\ve}\big)v_\ve^2(x)  \,dx
-
\frac{1}{|\Box|}\int \limits_{G_\ve } \! \int \limits_{Y(x_1)} \!\!\!\! w(x_1, y) \,dy \, v_\ve^2(x) \,dx
& = O(\ve \| v_\ve \|_{L^2(G_\ve )} \| \nabla v_\ve \|_{L^2(G_\ve )}).
\end{align*}
\end{lemma}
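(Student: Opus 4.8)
The plan is to reduce the global integral identity to a sum over the $\ve$-cells and then estimate, cell by cell, the difference between the integral of $w(x_1,x/\ve)v_\ve^2$ over the perforated cell and the average of $w$ over $Y(x_1)$ against $v_\ve^2$. Write $G_\ve=\bigcup_k\square_\ve^k$ where $\square_\ve^k = \ve(k+\square)$ (with the obvious restriction near the endpoints), so that $\Omega_\ve\cap\square_\ve^k = \ve(k+Y_\ve^k)$ for the appropriately shifted perforated cell; let $x_1^k$ denote a reference point of $\square_\ve^k$ along the axis. On each cell I would freeze the slow variable: replace $w(x_1,x/\ve)$ by $w(x_1^k,x/\ve)$ and $Y(x_1)$ by $Y(x_1^k)$, paying an error controlled by the modulus of continuity of $F$ and of $w$ in $x_1$, which is $O(\ve)$ because $w\in C^{1,\alpha}(\bar I;C^\alpha(\overline\square))$ and $F\in C^{1,\alpha}$; multiplied by $\|v_\ve\|_{L^2}^2$ on that cell and summed this gives an $O(\ve\|v_\ve\|_{L^2(G_\ve)}^2)$ contribution, which is absorbed into the claimed bound after a Cauchy--Schwarz / Young step (noting $\ve\|v_\ve\|^2 \le \ve\|v_\ve\|\|\nabla v_\ve\|$ is \emph{not} automatic, so I would rather keep the $O(\ve\|v_\ve\|^2_{L^2})$ term and observe that a Poincar\'e inequality in the thin transverse direction is not what is wanted here — instead the frozen-coefficient error must itself be handled by the same oscillation argument, see below).

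The core of the argument is the purely periodic cell estimate: for a fixed slow point $\xi$ and the $1$-periodic (in $y_1$) function $y\mapsto w(\xi,y)\chi_{Y(\xi)}(y)$, one has for $\phi\in H^1$ of the rescaled cell
\[
\Big|\int_{\ve\square^k}\big(w(\xi,\tfrac{x}{\ve})\chi_{Y(\xi)}(\tfrac{x}{\ve}) - \tfrac{1}{|\square|}\!\int_{Y(\xi)}\! w(\xi,y)\,dy\big)\phi^2\,dx\Big|
\le C\ve\,\|\phi\|_{L^2(\ve\square^k)}\|\nabla\phi\|_{L^2(\ve\square^k)}.
\]
This follows by introducing the mean-zero periodic function $g(y) = w(\xi,y)\chi_{Y(\xi)}(y) - \langle w(\xi,\cdot)\rangle_{|\square|}$ and solving the auxiliary periodic problem $\Delta_y G = g$ in $\square$ with $\int_\square G=0$ (or div of a bounded periodic flux equal to $g$); then $g(x/\ve) = \ve\,\mathrm{div}_x\big(\ve^{-1}\cdots\big)$ in the distributional sense, so integrating against $\phi^2$ and integrating by parts transfers one derivative onto $\phi^2$, producing the factor $\ve$ and the product $\|\phi\|\|\nabla\phi\|$ after Cauchy--Schwarz. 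One must be a little careful because $g$ is only $C^\alpha$ (and discontinuous across $\partial Y(\xi)$ due to $\chi$), but $\mathrm{div}$-form solvability of $\mathrm{div}\, p = g$ with $p\in L^\infty_{\mathrm{per}}$ still holds, and the boundary $\partial Y(\xi)$ being $C^{1,\alpha}$ and bounded away from $\partial\square$ means no boundary terms on the lateral boundary appear (the normal trace of the flux is handled by the homogeneous condition $v_\ve\in H^1_0(G_\ve,\Gamma_\ve)$ and periodicity in $y_1$). Summing over $k$ and using $\sum_k\|\phi\|_{L^2(\square_\ve^k)}\|\nabla\phi\|_{L^2(\square_\ve^k)}\le\|\phi\|_{L^2(G_\ve)}\|\nabla\phi\|_{L^2(G_\ve)}$ (discrete Cauchy--Schwarz) gives the global bound with $\phi=v_\ve$.

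Finally I would return to the frozen-coefficient error. Rather than bounding it crudely by $O(\ve\|v_\ve\|^2_{L^2})$, I would note that $w(x_1,x/\ve) - w(x_1^k,x/\ve)$ is itself $O(\ve)$ \emph{uniformly in $x$}, hence contributes $O(\ve)\int_{G_\ve}v_\ve^2 = O(\ve\|v_\ve\|^2_{L^2(G_\ve)})$, and similarly the difference $\tfrac1{|\square|}\int_{Y(x_1)}w(x_1,y)dy - \tfrac1{|\square|}\int_{Y(x_1^k)}w(x_1^k,y)dy = O(\ve)$ contributes $O(\ve\|v_\ve\|^2_{L^2(G_\ve)})$; so the total is $O(\ve\|v_\ve\|^2_{L^2(G_\ve)}) + O(\ve\|v_\ve\|_{L^2(G_\ve)}\|\nabla v_\ve\|_{L^2(G_\ve)})$, and since in every application in the paper $\|v_\ve\|_{L^2(G_\ve)}\le C\|\nabla v_\ve\|_{L^2(G_\ve)}$ fails in general, I would instead absorb $\|v_\ve\|^2_{L^2}$ via the trivial bound $\|v_\ve\|^2_{L^2}\le\|v_\ve\|_{L^2}\big(\|v_\ve\|_{L^2}+\|\nabla v_\ve\|_{L^2}\big)$ — but this reintroduces $\|v_\ve\|^2$, so the cleanest route is simply to state the conclusion as $O\big(\ve\|v_\ve\|_{L^2(G_\ve)}(\|v_\ve\|_{L^2(G_\ve)}+\|\nabla v_\ve\|_{L^2(G_\ve)})\big)$; the paper's stated form $O(\ve\|v_\ve\|_{L^2}\|\nabla v_\ve\|_{L^2})$ then follows whenever the extra $\|v_\ve\|^2_{L^2}$ term is harmless (which it is in all uses, since there one always has control $\|\nabla v_\ve\|_{L^2(G_\ve)}\gtrsim\ve^{-1/2}\gg\|v_\ve\|_{L^2(G_\ve)}$). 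The main obstacle is the first, purely periodic, cell estimate with a $\chi_{Y}$-weighted, merely H\"older coefficient: getting the $\ve$-gain cleanly requires the div-form corrector $p$ with the right regularity and handling the interface $\partial Y(\xi)$ — everything else is bookkeeping (freezing, summation, Cauchy--Schwarz).
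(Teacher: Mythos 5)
Your decomposition into cells, your freezing of the slow variable, and your divergence-form corrector for the purely oscillatory part are all in the same spirit as the paper's proof, so the core strategy is sound. However, there is one genuine gap that prevents you from closing the argument as stated: you explicitly dismiss the Poincar\'e inequality $\|v_\ve\|_{L^2(G_\ve)}\le C\|\nabla v_\ve\|_{L^2(G_\ve)}$, saying it ``fails in general,'' and you consequently weaken the conclusion to $O\big(\ve\|v_\ve\|_{L^2}(\|v_\ve\|_{L^2}+\|\nabla v_\ve\|_{L^2})\big)$. But the hypothesis is $v_\ve\in H^1_0(G_\ve,\Gamma_\ve^\pm)$, i.e.\ $v_\ve$ vanishes at the \emph{ends} of the rod; since the rod has fixed length $1$ in the $x_1$ direction, the one-dimensional Poincar\'e inequality in $x_1$ yields $\|v_\ve\|_{L^2(G_\ve)}\le C\|\partial_{x_1}v_\ve\|_{L^2(G_\ve)}$ with a constant independent of $\ve$. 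This is exactly what the paper uses (three times, once for each of its three error terms) to absorb all the $O(\ve\|v_\ve\|_{L^2}^2)$ contributions into $O(\ve\|v_\ve\|_{L^2}\|\nabla v_\ve\|_{L^2})$. You seem to have confused this longitudinal Poincar\'e inequality with a transverse one (which would indeed give an $\ve$-dependent constant and is not relevant here). Once this is corrected, your freezing errors and your per-cell oscillation estimate do combine to give the stated bound.

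A secondary comment on structure: the paper separates the error into three explicit pieces --- (i) $\chi(x_1,x/\ve)-\chi(j,x/\ve)$, handled by an ``annulus'' geometric lemma together with a thin-layer trace inequality giving $O(\ve\|v_\ve\|_{L^2}^2+\ve^2\|v_\ve\|_{L^2}\|\nabla v_\ve\|_{L^2})$; (ii) the freezing error in $\int_{Y(\cdot)}w$, handled by a Taylor expansion giving $O(\ve\|v_\ve\|_{L^2}^2)$; and (iii) the mean-zero oscillatory part, handled by a single global vector potential $\Psi(x_1,y)$ with $\mathrm{div}_y\Psi$ equal to the oscillation and $\Psi\cdot n=0$ on $\partial\square$, followed by one global integration by parts. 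Your cell-by-cell corrector is essentially the same idea, but the per-cell version generates interface terms between neighbouring cells (the frozen corrector changes from cell to cell because $\xi=x_1^k$ does), which you would then need to re-sum and estimate by an additional Abel-summation or telescoping argument; the paper sidesteps this entirely by letting $\Psi$ depend smoothly on the slow variable $x_1$ and integrating by parts once over all of $G_\ve$. Your instinct that the discontinuity of $\chi_{Y(\xi)}$ is a delicate point is right, but it is resolved exactly as you guessed: one only needs a bounded (not continuous) flux with vanishing normal trace on $\partial\square$, which exists by standard $W^{2,p}$ elliptic theory for the Neumann problem $\Delta G=g$, $g\in L^\infty$, and then $\Psi=\nabla G$.
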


We use Lemma~\ref{lm:MVT-0} in the rescaled domain $\ve^{-1/4}G_\ve$ to estimate the zero-order term.
By performing a change of variables $z=x/\ve^{1/4}$, we obtain the following corollary.
\begin{corollary}
\label{lm:MVT-1}
Let $v_\ve \in H^1(\mathbf R^d, \mu_\ve)$ be such that $v_\ve = 0$ on $\ve^{-1/4}\Gamma_\ve^\pm$,
and $c(x_1, y) \in C^{1, \alpha}(\mathbf{R}; C^\alpha(\overline \Box))$.
Then as $\ve \to 0$,
\begin{align*}
& \int \limits_{ \mathbf R^d } (\chi \, c)\big(\ve^{1/4} z_1, \frac{z}{\ve^{3/4}}\big)\, v_\ve^2(z)  \,d\mu_\ve(z)
-
\frac{1}{|\Box|}\int \limits_{\mathbf R^d} |Y(\ve^{1/4} z_1)| \,\bar c(\ve^{1/4}z_1) \, v_\ve^2(z) \,d\mu_\ve(z) \\
&  \quad = O(\ve^{3/4} \| v_\ve \|_{L^2(\mathbf R^d,\, \mu_\ve)} \| \nabla v_\ve \|_{L^2(\mathbf R^d,\, \mu_\ve)}).
\end{align*}
\end{corollary}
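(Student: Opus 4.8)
The corollary is Lemma~\ref{lm:MVT-0} transported through the rescaling $z = x/\ve^{1/4}$ that underlies the measure $\mu_\ve$, so the plan is a change of variables followed by careful bookkeeping of the powers of $\ve$.

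First I would set $V_\ve(x) := v_\ve\big(x/\ve^{1/4}\big)$ for $x \in G_\ve$; the hypothesis $v_\ve = 0$ on $\ve^{-1/4}\Gamma_\ve^\pm$ transfers to the corresponding condition on $V_\ve$, making it admissible in Lemma~\ref{lm:MVT-0}, and I would take $w(x_1, y) := c(x_1, y)$, whose restriction to $\overline I \times \overline\Box$ has the regularity required there since $c \in C^{1,\alpha}(\mathbf{R}; C^\alpha(\overline\Box))$. From the definition \eqref{def:measure} of $\mu_\ve$ and the substitution $x = \ve^{1/4}z$ one has, for any integrand $f$,
\[
\ii{\mathbf{R}^d} f\, d\mu_\ve \;=\; \frac{\ve^{-\frac34(d-1)}}{|Q|_{d-1}}\, \ii{\ve^{-1/4}G_\ve} f(z)\, dz \;=\; \kappa_\ve \ii{G_\ve} f\big(x/\ve^{1/4}\big)\, dx, \qquad \kappa_\ve := \frac{\ve^{-\frac{4d-3}{4}}}{|Q|_{d-1}},
\]
while the arguments of $c$ collapse to the $\ve$-scale: $\ve^{1/4}z_1 = x_1$ and $z/\ve^{3/4} = x/\ve$. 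Hence the left-hand side of the corollary equals $\kappa_\ve$ times
\[
\ii{G_\ve} (\chi c)\big(x_1, x/\ve\big)\, V_\ve^2\, dx \;-\; \frac{1}{|\Box|}\ii{G_\ve} |Y(x_1)|\, \bar c(x_1)\, V_\ve^2\, dx .
\]

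Next I would recognise this bracket as the left-hand side of Lemma~\ref{lm:MVT-0} with the above $w$: since $\chi(x_1,\cdot)$ is the indicator of $Y(x_1)$, the first term is $\ii{\Omega_\ve} c(x_1, x/\ve)\, V_\ve^2\, dx$, and since $|Y(x_1)|\,\bar c(x_1) = \ii{Y(x_1)} c(x_1,y)\, dy$ by the definition of $\bar c$ in (H3), the second is $\frac{1}{|\Box|}\ii{G_\ve}\ii{Y(x_1)} c(x_1,y)\, dy\; V_\ve^2\, dx$. Lemma~\ref{lm:MVT-0} then shows the bracket is $O\big(\ve\,\|V_\ve\|_{L^2(G_\ve)}\,\|\nabla V_\ve\|_{L^2(G_\ve)}\big)$.

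It remains to return to the $\mu_\ve$-norms. From $\nabla V_\ve(x) = \ve^{-1/4}(\nabla v_\ve)\big(x/\ve^{1/4}\big)$ and the same change of variables, $\|V_\ve\|_{L^2(G_\ve)}^2 = \kappa_\ve^{-1}\,\|v_\ve\|_{L^2(\mathbf{R}^d,\mu_\ve)}^2$ and $\|\nabla V_\ve\|_{L^2(G_\ve)}^2 = \ve^{-1/2}\kappa_\ve^{-1}\,\|\nabla v_\ve\|_{L^2(\mathbf{R}^d,\mu_\ve)}^2$, so that
\[
\kappa_\ve\cdot \ve\,\|V_\ve\|_{L^2(G_\ve)}\,\|\nabla V_\ve\|_{L^2(G_\ve)} \;=\; \ve^{3/4}\,\|v_\ve\|_{L^2(\mathbf{R}^d,\mu_\ve)}\,\|\nabla v_\ve\|_{L^2(\mathbf{R}^d,\mu_\ve)},
\]
which is the asserted remainder. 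I do not expect a genuine obstacle here: the statement is pure change-of-variables bookkeeping, and the only point worth flagging is that the extra factor $\ve^{-1/4}$ produced by $\nabla V_\ve$ is precisely what turns the $O(\ve)$ of Lemma~\ref{lm:MVT-0} into the $O(\ve^{3/4})$ claimed, with the common prefactor $\kappa_\ve$ cancelling between the two sides.
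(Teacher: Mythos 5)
Your proof is correct and is precisely the approach the paper intends: the paper introduces the corollary with the one-line remark ``By performing a change of variables $z = x/\ve^{1/4}$, we obtain the following corollary,'' and you have filled in exactly that bookkeeping. All the powers of $\ve$ check out: the prefactor $\kappa_\ve = \ve^{-(4d-3)/4}/|Q|_{d-1}$ relating $d\mu_\ve$ to Lebesgue measure on $G_\ve$, the collapse of the arguments $\ve^{1/4}z_1 = x_1$ and $z/\ve^{3/4} = x/\ve$, the identifications $\chi(x_1,\cdot)\,c(x_1,\cdot) \leftrightarrow$ integrand over $\Omega_\ve$ and $|Y(x_1)|\bar c(x_1) = \int_{Y(x_1)} c\,dy$, the norm conversions $\|V_\ve\|_{L^2(G_\ve)}^2 = \kappa_\ve^{-1}\|v_\ve\|_{L^2(\mu_\ve)}^2$ and $\|\nabla V_\ve\|_{L^2(G_\ve)}^2 = \ve^{-1/2}\kappa_\ve^{-1}\|\nabla v_\ve\|_{L^2(\mu_\ve)}^2$, and the resulting $\ve\cdot\ve^{-1/4} = \ve^{3/4}$.
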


\begin{proof}[Proof of Lemma~\ref{lm:MVT-0}]
To fix the argument in $Y(x_1)$ we
write the left hand side in the statement of Lemma~\ref{lm:MVT-0}
as a sum of integrals over the cells $\ve \square_j$, where $\square_j := (j, 0, \cdots, 0) + \square$.
\begin{align}
& \int\limits_{ \Omega_\ve } w\big(x_1, \frac{x}{\ve}\big)v_\ve^2(x)  \,dx
- \frac{1}{|\Box|}\int\limits_{G_\ve } \int\limits_{Y(x_1)} w(x_1, y) \,dy \, v_\ve^2(x) \,dx \notag \\
& \quad = \sum_j
\int\limits_{\ve \square_j}   ( \chi\big(x_1, \frac{x}{\ve}\big) - \chi\big(j, \frac{x}{\ve}\big) )w\big(x_1, \frac{x}{\ve}\big)v^2_\ve(x) \,dx \label{eq:firstint}\\
&  \qquad +
\frac{1}{|\Box|} \sum_j \int\limits_{\ve \square_j} \Big(  \int\limits_{Y(j)} w(x_1, y) \,dy - \int\limits_{Y(x_1)} w(x_1, y) \,dy  \Big)  v^2_\ve(x)  \,dx \label{eq:secondint}\\
& \qquad +
\int\limits_{G_\ve}  \sum_j \!\Big(  \chi\big(j, \frac{x}{\ve}\big)w\big(x_1,\frac{x}{\ve}\big) - \frac{1}{|\Box|}\int\limits_{Y(j)} w(x_1, y) \,dy  \Big)  v_\ve^2(x) \,dx. \label{eq:thirdint}
\end{align}
We estimate the terms on the lines \eqref{eq:firstint}--\eqref{eq:thirdint}.

We consider first \eqref{eq:firstint}.
Since $w$ is bounded we have by Lemmas~\ref{lm:annulus}, \ref{lm:lipschitzlayer},
\begin{align*}
& \Big| \int\limits_{\ve \square_j} ( \chi\big(x_1, \frac{x}{\ve}\big) - \chi\big(j, \frac{x}{\ve}\big) )w\big(x_1, \frac{x}{\ve}\big)v^2_\ve(x)   \,dx \Big| \\
& \quad \le C \ve^d \int\limits_{ A(j, C\ve) } v^2_\ve(\ve y) \,dy
\le C \Big( \ve \int\limits_{\ve \square_j} v_\ve^2 \,dx + \ve^2 \int\limits_{\ve \square_j} |v_\ve||\nabla v_\ve| \,dx \Big),
\end{align*}
for sufficiently small $\ve$, where $A(j,\eta) := \{ y \in \square_j : \mop{dist}(y,\Lambda(j)) < \eta \}$.
Thus as $\ve \to 0$,
\begin{align*}
& \sum_j
\int\limits_{\ve \square_j}   ( \chi\big(x_1, \frac{x}{\ve}\big) - \chi\big(j, \frac{x}{\ve}\big) )w\big(x_1, \frac{x}{\ve}\big)v^2_\ve(x)   \,dx \\
& \quad = O\Big(   \ve \int\limits_{ G_\ve } v_\ve^2 \,dx + \ve^2 \int\limits_{G_\ve} |v_\ve||\nabla v_\ve| \,dx  \Big)
= O\Big(  \ve \| v_\ve \|_{L^2(G_\ve)} \| \nabla v_\ve \|_{L^2(G_\ve)}  \Big),
\end{align*}
where we in the last step used the inequalities of arithmetic-geometric mean and Poincar\'e.

We turn to \eqref{eq:secondint}.
A Taylor expansion of $\int_{Y(s)} w(x_1, y)\,dy$ about $s = j$, which is justified since $\nabla_y F \neq 0$, yields
for $x \in \ve \square_j$ that
\begin{align*}
\int\limits_{Y(j)} w(x_1, y) \,dy - \int\limits_{Y(x_1)} w(x_1, y) \,dy
& = O( \ve ),
\end{align*}
as $\ve \to 0$, by the regularity of $F$ and boundedness of $w$.
A sum over $j$ gives the following estimate for \eqref{eq:secondint}:
\begin{align*}
& \sum_j \frac{1}{|Q|}\int\limits_{\ve \square_j}  \Big(  \int\limits_{Y(j)} w(x_1, y) \,dy - \int\limits_{Y(x_1)} w(x_1, y) \,dy  \Big)  v^2_\ve(x)  \,dx \\
& \quad = O\Big(  \ve \int\limits_{G_\ve} v_\ve^2 \,dx  \Big) = O( \ve \| v_\ve \|_{L^2(G_\ve)} \| \nabla v_\ve \|_{L^2(G_\ve)} ),
\end{align*}
as $\ve \to 0$, by the Poincar\'e inequality.

To estimate \eqref{eq:thirdint} we define a potential as follows.
Let $\Psi(x_1, y) \in C^{1, \alpha}(\bar I; C^{\alpha}(\overline{\square}))$
defined for $x_1 \in \bar I$ by
\[
\begin{cases}
\disp
{\mop{div}}_y \Psi(x_1, y) = \sum_j \Big( \chi(j, y) w(x_1, y) - \frac{1}{|\Box|}\int\limits_{\square_j} \chi(j, y) w(x_1, y) \,dy \Big), \,\, y \in \Box,\\
\Psi(x_1, y) \cdot n = 0, \text{  $y \in \partial \square$,}
\end{cases}
\]
where $n$ denotes the outward unit normal.
The compatibility condition is satisfied and
the right hand side belongs to $C^{1, \alpha}(\bar I; L^\infty(\square))$, so $\Psi$ is well-defined.

The integral \eqref{eq:thirdint} can then be estimated using the Green formula:
\begin{align*}
& \int\limits_{G_\ve}   \sum_j \Big(  \chi\big(j, \frac{x}{\ve}\big)w\big(x_1,\frac{x}{\ve}\big) - \frac{1}{|\Box|}\int\limits_{Y(j)} w(x_1, y) \,dy  \Big)  v_\ve^2(x) \,dx \\
& \quad = \int\limits_{G_\ve} {\mop{div}}_y \Psi(x_1, y)\big|_{y=x/\ve} \, v_\ve^2(x) \,dx \\
& \quad = \ve \int\limits_{G_\ve} \mop{div} \Psi\big(x_1, \frac{x}{\ve}\big) \, v_\ve^2(x) \,dx
- \ve \int\limits_{G_\ve} \partial_{x_1} \Psi(x_1, y)\big|_{y=x/\ve} \, v_\ve^2(x) \,dx \\
& \quad = - \ve \int\limits_{G_\ve} \Psi\big(x_1, \frac{x}{\ve}\big) \cdot \nabla v_\ve^2(x) \,dx
+ O\Big( \ve \int\limits_{G_\ve} v_\ve^2 \,dx \Big) \\
& \quad = O( \ve \| v_\ve \|_{L^2(G_\ve)} \| \nabla v_\ve \|_{L^2(G_\ve)} ),
\end{align*}
as $\ve \to 0$, by the Poincar\'e inequality.

We have estimated all terms to $O( \ve \| v_\ve \|_{L^2(G_\ve)} \| \nabla v_\ve \|_{L^2(G_\ve)}  )$ which gives the desired estimate.
\end{proof}

\begin{lemma}
\label{lm:annulus}
For any $s \in I$ there exists positive constants $C$, $\delta$ such that
\begin{align*}
Y(s) \Delta Y(t) & \subset \{ \, y \in \square : \mop{dist}(y, \Lambda(s)) < C|s - t| \, \},
\end{align*}
for all $t$ satisfying $|s - t| < \delta$. Here $\Delta$ denotes the symmetric difference for two sets, $\Lambda(s)=\{y \in \Box: \,\, F(s, y)=0\}$ is the boundary of the hole in $\Box$.
\end{lemma}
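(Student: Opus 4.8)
The plan is to forget the geometry of the symmetric difference as much as possible and reduce everything to a non-degeneracy estimate for the single function $F$. First I would observe that if $y\in Y(s)\Delta Y(t)$, where $Y(\sigma)=\{y\in\square: F(\sigma,y)>0\}$, then $F(s,y)$ and $F(t,y)$ have opposite (weak) signs, so
\[
|F(s,y)|\ \le\ |F(s,y)-F(t,y)|\ \le\ \|\partial_{x_1}F\|_{L^\infty}\,|s-t|\ =:\ L\,|s-t|,
\]
the $L^\infty$-bound being available since $F\in C^{1,\alpha}(\overline I\times\overline I\times\overline Q)$. Consequently it suffices to find, for the fixed $s$, constants $\eta_0>0$ and $c_\ast>0$ such that $|F(s,y)|<\eta_0$ implies $\mop{dist}(y,\Lambda(s))\le c_\ast^{-1}|F(s,y)|$; the lemma then follows with $C=2c_\ast^{-1}L$ and any $\delta<\eta_0/L$.

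Next I would pin down the level set $\Lambda(s)=\{y\in\square:F(s,y)=0\}$. The hypotheses $F|_{y_1=\pm 1/2}\ge\mathrm{const}>0$ together with the standing assumption that the hole is bounded away from the cell boundary give $F(s,\cdot)\ge\kappa>0$ near $\mathbb T^1\times\partial Q$, while $F(s,0)=-1$; hence $\Lambda(s)$ is a compact subset of $\square\setminus\{0\}$, on which $\nabla_yF(s,\cdot)$ is continuous and, by assumption, nowhere zero, so by the implicit function theorem $\Lambda(s)$ is a $C^{1,\alpha}$ hypersurface in $\square$. The heart of the matter is then the quantitative non-degeneracy, namely that $|F(s,\cdot)|$ is comparable to $\mop{dist}(\cdot,\Lambda(s))$ in a tube around $\Lambda(s)$. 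I would prove the lower bound $|F(s,y)|\ge c_\ast\mop{dist}(y,\Lambda(s))$ either by covering $\Lambda(s)$ by finitely many balls on each of which, after a $C^{1,\alpha}$ change of coordinates straightening the level set, $F(s,\cdot)$ is (up to a nonvanishing factor) a coordinate function; or by contradiction, taking $y_n$ violating the bound with nearest points $\bar y_n\in\Lambda(s)$, writing $F(s,y_n)=\nabla_yF(s,\bar y_n)\cdot(y_n-\bar y_n)+O(|y_n-\bar y_n|^{1+\alpha})$ via Taylor's theorem (using $F(s,\bar y_n)=0$ and $F(s,\cdot)\in C^{1,\alpha}$), and using that a nearest-point direction to a $C^1$ hypersurface converges to its unit normal, which at the limit point is parallel to $\nabla_yF(s,\cdot)\neq 0$. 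Away from such a tube, compactness gives $|F(s,\cdot)|\ge\eta_1>0$; choosing $\eta_0$ small enough (below $\eta_1$ and below $c_\ast$ times the tube radius) forces any $y$ with $|F(s,y)|<\eta_0$ into the tube, where the comparison applies, and closes the reduction of the first paragraph.

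I expect the main obstacle to be precisely this uniform linear lower bound: the implicit function theorem only yields the comparison locally, near each point of $\Lambda(s)$, and some care is needed either to patch the charts or to control the nearest-point projection (equivalently, to see that the implied constants do not degenerate along $\Lambda(s)$) — this is where the regularity $F\in C^{1,\alpha}$ and the non-degeneracy $\nabla_yF\neq 0$ are genuinely used. A secondary point that should not be skipped is checking, from the geometric hypotheses on $F$, that $\Lambda(s)$ really stays in the interior of $\square$, so that it is a compact hypersurface without boundary and the distance function behaves as expected; once that is in place, the remaining steps are routine and only the comparison estimate requires work.
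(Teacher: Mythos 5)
Your proof is correct but takes a genuinely different route from the paper's. The paper parametrizes $\Lambda(s)$ and $\Lambda(t)$ locally as graphs $\nu=G(x_1,\tau)$ in normal--tangential coordinates via the implicit function theorem, applies Taylor's theorem in $x_1$ to get $|G(s,\tau)-G(t,\tau)|=O(|s-t|)$, and then covers the compact set $\Lambda(s)$ by finitely many such charts. You instead factor the geometry through the scalar $|F(s,\cdot)|$: on $Y(s)\,\Delta\,Y(t)$ the values $F(s,y)$ and $F(t,y)$ have opposite weak signs, so $|F(s,y)|\le|F(s,y)-F(t,y)|\le L|s-t|$, and the quantitative non-degeneracy $|F(s,y)|\ge c_*\,\mop{dist}(y,\Lambda(s))$ near $\Lambda(s)$ (established by a Taylor expansion of $F(s,\cdot)$ at the nearest point of $\Lambda(s)$, together with compactness of $\Lambda(s)$ and $\nabla_y F\ne 0$ there) closes the argument. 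The ingredients are the same --- $C^{1,\alpha}$ regularity, non-vanishing of $\nabla_y F$ on $\Lambda(s)$, compactness --- but your decomposition never parametrizes the $t$-dependent level set, cleanly separates the $x_1$-Lipschitz estimate (which absorbs all $t$-dependence) from the $s$-geometry encoded in the distance comparison, and sidesteps the slight informality in the paper's phrase ``$\sup_{w\in W}\mop{dist}(w,\Lambda(s))<C|s-t|$'' for a $t$-independent neighbourhood $W$. Both proofs rely on the standing assumption, which you rightly flag, that $\Lambda(s)$ is a compact hypersurface without boundary in the interior of $\square$.
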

\begin{proof}
Let $s \in I$.
Let $y \in \Lambda(s)$ and denote by $n$ the outward unit normal to $Y(s)$ at $y$.
Let $\nu, \tau$ denote the normal-tangential components in local coordinates at $y$.

Since $F(s,y) = 0$ and $\partial_\nu F(s,y) = |\nabla_y F(s,y)| \neq 0$,
by the implicit function theorem, there exists  neighbourhoods $U$ of $(s,\tau(y))$ and $V$ of $\nu(y)$
and a unique function $G(x_1, \tau) : U \to V$ such that $F(x_1, G(x_1, \tau)n + \tau ) = 0$
for all $(x_1, \tau) \in U$.
Since $F(x_1, y) \in C^1(I \times \square)$ we have $G(x_1, \tau) \in C^1(U)$.

By Taylor's theorem $G(s, \tau) - G(t, \tau) = O(|s-t|)$ as $|s - t| \to 0$.
Thus there exists a neighbourhood $W$ of $y$ such that
\begin{align}
\label{eq:thinaux}
\sup_{w \in W} \mop{dist}(w, \Lambda(s)) < C |s - t|,
\end{align}
for $|s - t|$ sufficiently small.

Since $y$ was arbitrary there is an open cover $\{W_j\}$ of $\Lambda(s)$.
From the regularity of $F$ it follows that $\Lambda(s)$ is compact.
Thus there exists a finite subcover and in particular a finite set of positive constants $\{ C_j \}$
as in \eqref{eq:thinaux}.
Therefore,
\begin{align*}
Y(s) \Delta Y(t) \subset \{ y \in \square : \mop{dist(y, \Lambda(s))} < \max_j C_j |s - t| \},
\end{align*}
for $t$ sufficiently close to $s$. Since $s$ was arbitrary we are done.
\end{proof}

The following version of the trace inequality will be used.

\begin{lemma}\label{lm:lipschitzlayer}
Let $\Omega$, $\Omega'$ be bounded Lipschitz domains in $\rr^d$ such that $\overline{\Omega'} \subset \Omega$.
Denote $A(\Omega', \delta) = \{ y : \mop{dist}(y, \partial \Omega') < \delta \}$.
Then there exists an positive constant $C_{\Omega'}$ such that
\begin{align*}
\int\limits_{A(\Omega', \delta)} v^2 \, dx
& \le
C_{\Omega'}
\delta
\Big(
\int\limits_{\Omega} v^2 \, dx
+
\int\limits_{\Omega} |v||\nabla v| \, dx
\Big),
\end{align*}
for all sufficiently small $\delta > 0$ and all $v \in H^1(\Omega)$.
\end{lemma}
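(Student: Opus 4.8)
The plan is to localize the estimate to a neighbourhood of $\partial\Omega'$ using the Lipschitz structure of that boundary, and then, chart by chart, to reduce it to a one-dimensional computation; the key device is an averaging trick over a segment of fixed length, which lets one dispense with any boundary trace term on the right-hand side. As a preliminary remark, since $\overline{\Omega'}\Subset\Omega$ and both sets are bounded, $d_0:=\mop{dist}(\partial\Omega',\partial\Omega)>0$, so that $A(\Omega',\delta)\Subset\Omega$ as soon as $\delta<d_0$, and all integrals below make sense.

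First I would set up the geometry. Because $\partial\Omega'$ is Lipschitz, I can cover it by finitely many open boxes $U_1,\dots,U_m$ such that, after an orthogonal change of coordinates in $U_k$, one has $\partial\Omega'\cap U_k=\{(y',y_d):y_d=g_k(y')\}$ with $g_k$ Lipschitz, $\mathrm{Lip}(g_k)\le L$. A Lebesgue-number/triangle-inequality argument, using $|g_k(y')-g_k(z')|\le L|y'-z'|$, then shows that there is $\delta_0>0$ with
\[
A(\Omega',\delta)\ \subset\ \bigcup_{k=1}^m S_k(\delta),\qquad S_k(\delta):=\big\{(y',y_d)\in B_k\times\rr:\ |y_d-g_k(y')|<(1+L)\delta\big\}\quad(\delta<\delta_0),
\]
where $B_k$ is the base of a box slightly smaller than $U_k$. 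Shrinking $B_k$ and $\delta_0$ if necessary, I can also fix a length $\ell>0$ and, for each $k$, an interval $[a_k,b_k]$ of length $\ell$ with $B_k\times[a_k,b_k]\subset\Omega$ and $(g_k(y')-(1+L)\delta_0,\,g_k(y')+(1+L)\delta_0)\subset[a_k,b_k]$ for all $y'\in B_k$.

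Next I would prove the inequality for $v\in C^1(\overline\Omega)$ and then pass to a general $v\in H^1(\Omega)$ by density of smooth functions in $H^1$ of a Lipschitz domain, both sides being continuous in the $H^1(\Omega)$-norm. Fix a chart $k$. For $(y',y_d)\in S_k(\delta)$ and any $s\in[a_k,b_k]$, the fundamental theorem of calculus gives $v(y',y_d)^2=v(y',s)^2+\int_s^{y_d}\partial_t\big(v^2\big)(y',t)\,dt$; averaging over $s\in[a_k,b_k]$, using $|\partial_t(v^2)|\le 2|v|\,|\nabla v|$, and enlarging the inner integral to $[a_k,b_k]$ yields
\[
v(y',y_d)^2\ \le\ \frac1\ell\int_{a_k}^{b_k}v(y',s)^2\,ds\ +\ 2\int_{a_k}^{b_k}|v|\,|\nabla v|(y',t)\,dt .
\]
Integrating over $y_d$ in the interval of length $2(1+L)\delta$ that defines $S_k(\delta)$, and then over $y'\in B_k$, and using $B_k\times[a_k,b_k]\subset\Omega$, gives $\int_{S_k(\delta)}v^2\le 2(1+L)\delta\big(\frac1\ell\int_\Omega v^2+2\int_\Omega|v|\,|\nabla v|\big)$. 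Summing over $k=1,\dots,m$ and using the covering of $A(\Omega',\delta)$ produces the claim with $C_{\Omega'}=2(1+L)\,m\,\max(1/\ell,2)$.

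I expect the only genuinely delicate point to be the geometric reduction of the second step: showing that the $\delta$-neighbourhood of $\partial\Omega'$ is contained in a fixed finite union of graph strips of width $O(\delta)$, uniformly in $\delta$, and that the fixed-length averaging segments fit inside $\Omega$. This is precisely where the Lipschitz regularity of $\partial\Omega'$ and the separation $\overline{\Omega'}\Subset\Omega$ are used; the remaining steps are the standard trace-inequality computation.
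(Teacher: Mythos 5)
Your proof is correct and follows the same overall strategy as the paper: cover $\partial\Omega'$ by finitely many Lipschitz charts, reduce to estimating on graph strips of thickness $O(\delta)$, prove the chartwise bound for $v\in C^1(\overline\Omega)$ by a one-dimensional argument in the transverse variable, and conclude by density. The one genuine difference is the one-dimensional step. You write $v(y',y_d)^2 = v(y',s)^2 + \int_s^{y_d}\partial_t(v^2)\,dt$ and average the baseline $s$ over a fixed-length segment $[a_k,b_k]\subset\Omega$, so that the factor $\delta$ drops out directly from the $y_d$-integration over the thin strip. The paper instead multiplies $v^2$ by the linear weight $y_d - a(\tilde y) + C_2$, differentiates the product, averages in an auxiliary variable $\tau$ over the fixed-length interval, and then divides by the weight before integrating over the $\delta$-strip, producing the factor $\ln\frac{C_2+\delta}{C_2-\delta} = O(\delta)$. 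Your averaging device is a bit cleaner and avoids the weight/logarithm bookkeeping (in which the paper appears to have a small typographical slip, writing $y_d - a(\tilde y) + 2\delta$ where $y_d - a(\tilde y) + C_2$ is evidently intended). The covering step you flag as delicate, namely the inclusion $A(\Omega',\delta)\subset\bigcup_k S_k(\delta)$ for small $\delta$, is routine once the Lebesgue number of the chart cover and the Lipschitz bound $|g_k(y')-g_k(z')|\le L|y'-z'|$ are in hand, and your sketch of it is adequate.
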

\begin{proof}
Since $\Omega'$ is Lipschitz there exists positive constants $C_1, C_2$,
a finite number of uniformly Lipschitz functions $a^j : [-C_1,C_1]^{d-1} \to \rr$ and
rotated-translated coordinates $y^j = (\tilde y^{j}, y_d^j)$ such that all $x \in \partial \Omega'$ are represented as $(\tilde y^j, a(\tilde y^j))$ for some $j$, and
\begin{align*}
\{ y : a^j(\tilde y^{j}) < y_d^j < a^j(\tilde y^j) + C_2 \} & \subset \Omega', \\
\{ y : a^j(\tilde y^{j}) - C_2 < y_d^j < a^j(\tilde y^j) \} & \subset \Omega \setminus \overline{\Omega'}.
\end{align*}
Since $\overline{\Omega'} \subset \Omega$, the cells
$V^j = \{ y : |y^j_k| \le C_1, \, 1 \le k \le d, \, |y^j_d| \le C_2 \}$
can be chosen to be contained in $\Omega$.
We drop the superscript $j$ and consider a local cell $V$.

Let $\delta$ be small enough such that $0<\delta < C_2$
and let $v \in C^1(\overline{\Omega})$.
Then for $\tilde y \in [-C_1,C_1]^{d-1}$ and
$a(\tilde y) - \delta < y_d < \tau < a(\tilde y) + \delta$ we have
\begin{align*}
& (y_d - a(\tilde y) + C_2) v^2(y) \\
& \quad =
- \int\limits_{y_d}^\tau  \partial_{y_d}( (y_d - a(\tilde y) + C_2 )v^2(y) )  \, dy_d
+ (\tau - a(\tilde y) + C_2)v^2(\tilde y, \tau) \\
& \quad = - \int\limits_{y_d}^\tau ( v^2(y) + 2(y_d - a(\tilde y) + C_2)v(y) \partial_{y_d} v (y) )  \, dy_d
+ (\tau - a(\tilde y) + C_2)v^2(\tilde y, \tau) \\
& \quad \le
\int\limits_{a(\tilde y) - C_2}^{a(\tilde y) + C_2} ( v^2(y) + 3C_2  |v(y)| |\nabla v (y)| )  \, dy_d
+ 3C_2 v^2(\tilde y, \tau).
\end{align*}
An integration in $\tau$ over $(a(\tilde y)-C_2, a(\tilde y)+ C_2)$ gives
\begin{align*}
(y_d - a(\tilde y) + 2\delta) v^2(y)
& \le
3
\int\limits_{a(\tilde y) - C_2}^{a(\tilde y) + C_2} v^2 \,dy_d
+
4C_2
\int\limits_{a(\tilde y) - C_2}^{a(\tilde y) + C_2} |v||\nabla v| \,dy_d.
\end{align*}
Dividing by $y_d - a(\tilde y) + 2\delta$ and integrating in $y$ over $(-C_1,C_1)^{d-1} \times (-\delta,\delta)$ yields
\begin{align*}
\int\limits_{V \cap A(\Omega', \delta)} v^2 \,dx
& \le
\ln \frac{C_2 + \delta}{C_2 - \delta}
\Big(
3
\int\limits_{V} v^2 \, dx
+
4C_2 \int\limits_{V} |v||\nabla v| \,dx
\Big).
\end{align*}
Using $\ln  \frac{C_2 + \delta}{C_2 - \delta} = O(\delta)$ as $\delta \to 0$,
a sum over all cells $V^j$ gives a $C_{\Omega'} > 0$ such that
\begin{align*}
\int\limits_{A(\Omega', \delta)} v^2 \,dx
& \le
C_{\Omega}' \delta
\Big(
\int\limits_{\Omega} v^2 \, dx
+
\int\limits_{\Omega} |v||\nabla v| \,dx
\Big).
\end{align*}
The result follows by the density of $C^1(\overline{\Omega})$ in $H^1(\Omega)$.
\end{proof}

%%%%%%%%%%%%%%%%%%%%%%%%%%%%%%%%%%%%%%%%%%%%%%%%%%%%%%%%%%%%%%%%%%%%%%%%%%%%%%%%%%%%%%%%%%%%%%%%%%%%%%%%%%%%%%%%%%%%%%%%%%%%%%%%%%%%%%%%%%%%%%%%

\section{One-dimensional example}\label{exa:oned}
\label{sec:1D-example}

To illustrate the asymptotics of the eigenpairs to equation \eqref{eq:orig-prob},
we consider, for small $\ve > 0$, the principal eigenpair $(\lambda_1^\ve, u_1^\ve)$ to the following equation
 on the one-dimensional interval (0,1):
\begin{align}\label{eq:onedu}
\begin{cases}
-u'' + \frac{1+x^2}{\ve}u = \lambda u, & (|x| < 1),\\
u(-1) = u(1) = 0.
\end{cases}
\end{align}
 A change of variable $x = \ve^{1/4}y$ leads to an equation with no big coefficients:
\begin{align}\label{eq:onedv}
\begin{cases}
-v'' + y^2 v = \mu^\ve v, & \big(|y| < \ve^{-1/4}\big),\\
v(-\ve^{-1/4}) = v(\ve^{-1/4}) = 0,
\end{cases}
\end{align}
where $v(y) = u(\ve^{1/4}y)$ and $\mu^\ve = \sqrt{\ve}\big(\lambda - \frac{1}{\ve}\big)$.
To describe $u_1^\ve$ as $\ve \to 0$ we will use the equation
\begin{align}\label{eq:onedw}
-w'' + y^2 w = \eta w,\,\,\,\,\,\, \big(y \in \rr).
\end{align}
The principal eigenpair to \eqref{eq:onedw} is
$\eta_1 = 1$, $w_1 = \exp(-y^2/2)$.
Under suitable normalization of $u_1^\ve$, $w_1$,
\begin{align}\label{eq:onedconv}
\left|\lambda_1^\ve - \frac{1}{\ve} - \frac{\eta_1}{\sqrt{\ve}} \right| & \to 0, &
\sup_{x \in \rr} \left|u_1^\ve(x) - w_1\left(\frac{x}{\ve^{1/4}}\right)\right| &\to 0.
\end{align}
as $\ve \to 0$, where $u_1^\ve$ is extended to $\rr$ by zero.
In particular, this means that $u_1^\ve$ concentrates in the vicinity of zero, which is the unique minimum point of the coefficient $1 + x^2$ in \eqref{eq:onedu}.

We prove the convergences in \eqref{eq:onedconv}.
Let $(\mu^\ve_1, v_1^\ve)$, $(\eta_1, w_1)$ be the principal eigenpairs to the equations
\eqref{eq:onedv}--\eqref{eq:onedw}, respectively, normalized by \mbox{$v_1^\ve(0) = w_1(0) = 1$}.
The eigenfunctions $v_1^\ve$, $w_1$ are simple and do not change sign and are thus by the symmetry of the boundary value problems necessarily even.
Therefore they are uniquely defined by the following initial value problems on $[0,\ve^{-1/4}]$
for fixed $\mu_1^\ve$, $\eta_1$:
\begin{align}
\begin{cases}
(v_1^\ve)'' = (y^2 - \mu^\ve_1)v_1^\ve,\\
v_1^\ve(0) = 1,\\
(v_1^\ve)'(0) = 0,
\end{cases}
&&
\begin{cases}
w_1'' = (y^2 - \eta_1)w_1,\\
w_1(0) = 1,\\
w_1'(0) = 0.
\end{cases}
\label{eq:onedvw}
\end{align}

A comparison of \eqref{eq:onedv} and \eqref{eq:onedw} using the minimum principle for the Rayleigh quotient gives $\mu^\ve_1 > \eta_1$ since $w_1 = \exp(-y^2/2) > 0$. In particular,
 the coefficients in the equations in \eqref{eq:onedvw} satisfy
$y^2 - \mu_1^\ve < y^2 - \eta_1$, and both are increasing functions.
Thus by \eqref{eq:onedvw}, the graph of $v_1^\ve$ will stay below the graph of $w_1$ and the distance between the graphs is an increasing function.
We conclude that
\begin{align*}
\sup_y |w_1 - v_1^\ve| & = w_1(\ve^{-1/4}) = e^{-\frac{1}{2\sqrt{\ve}}},
\end{align*}
which shows the second convergence in \eqref{eq:onedconv}, and in addition that the error decreases exponentially with $\ve$, as $\ve \to 0$.
We turn to the eigenvalues by considering the Rayleigh quotient:
\begin{align*}
\mu_1^\ve & = \inf_{ \substack{v \in H^1_0(-\ve^{-1/4},\,\ve^{-1/4} )\\ v \neq 0} }
\frac{ \int\limits_{-\ve^{-1/4}}^{\ve^{-1/4}} (v')^2 \,dy + \int\limits_{-\ve^{-1/4}}^{\ve^{-1/4}} y^2 v^2 \,dy }{\int\limits_{-\ve^{-1/4}}^{\ve^{-1/4}} v^2 \,dy}.
\end{align*}
Using $v = |1 - \ve^{1/4}y|w_1(y)$ as a test function,
where $1 - \ve^{1/4}y$ is the polynomial of degree one that makes the test function satisfy the boundary condition, gives
\begin{align*}
\mu_1^\ve & \le
\frac{ 2(4 + 3\sqrt{\ve})e^{\frac{1}{\sqrt{\ve}}} \int\limits_0^{\ve^{-1/4}} \! e^{-s^2} ds + 4\ve^{\frac{1}{4}}e^{\frac{1}{\sqrt{\ve}}} + 2\ve^{\frac{1}{4}}  }{
2(4 + \sqrt{\ve})e^{\frac{1}{\sqrt{\ve}}} \int\limits_0^{\ve^{-1/4}} \! e^{-s^2} ds + 4\ve^{\frac{1}{4}}e^{\frac{1}{\sqrt{\ve}}} + 2\ve^{\frac{1}{4}}
 }
\to 1,
\end{align*}
as $\ve \to 0$.
Together with $\eta_1 < \mu_1^\ve$, this shows the first convergence in~\eqref{eq:onedconv}.

The graphs of $u_1^\ve(x)$ and $w_1(\ve^{-1/4}x)$ for some values of $\ve$ are shown in Figure~\ref{fig:oned}.
We see that the error decreases with $\ve$ and the functions concentrate.
\begin{figure}[h]
\centering
\includegraphics[width=.99\textwidth]{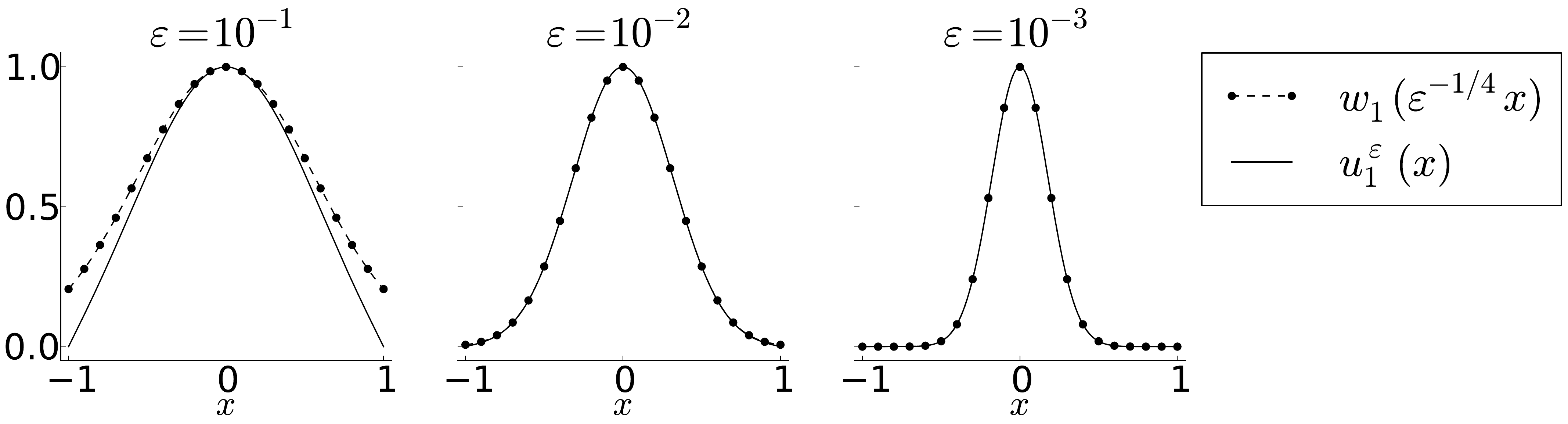}
\caption{Graphs of eigenfunctions and approximations in one-dimensional example.}
\label{fig:oned}
\end{figure}

%\section*{Acknowledgements}
%We thank professor Andrey Piatnitski for his valuable comments and discussions on the paper.

\FloatBarrier

\def\cprime{$'$} \def\cprime{$'$} \def\cprime{$'$}

\end{document}